\documentclass[10pt,reqno]{amsart}
\usepackage[margin=1.1in]{geometry}

\newcommand{\NN}{\mathbb{N}}
\newcommand{\RR}{\mathbb{R}}
\newcommand{\ZZ}{\mathbb{Z}}

\newcommand{\Hom}{\mathrm{Hom}}

\newcommand{\Spec}{\mathrm{Spec}}

\newcommand{\rank}{\mathrm{rank}}

\newcommand{\rel}{\mathrm{rel}}
\newcommand{\abs}{\mathrm{abs}}
\newcommand{\can}{\mathrm{can}}
\newcommand{\Pic}{\mathrm{Pic}}

\usepackage{amsthm}
\usepackage{cite}

\newtheorem{thm}{Theorem}[section]

\newtheorem{prop}[thm]{Proposition}
\newtheorem{lem}[thm]{Lemma}
\newtheorem{claim}[thm]{Claim}
\newtheorem{cor}[thm]{Corollary}

\newtheorem*{thm-non}{Theorem}

\theoremstyle{definition}
\newtheorem{defn}[thm]{Definition}

\newtheorem{rem}[thm]{Remark}

\newtheorem{set-up}[thm]{Set-up}

\usepackage{graphicx}
\usepackage{amssymb}
\usepackage{epstopdf}
\usepackage{enumerate}
\usepackage{tikz}
\setlength{\marginparwidth}{2cm}

\usepackage{mathtools}

\input xy
\xyoption{all}
\DeclareGraphicsRule{.tif}{png}{.png}{`convert #1 `dirname #1`/`basename #1 .tif`.png}

\numberwithin{equation}{section}

\begin{document}

\title{MAXIMALLY FROBENIUS-DESTABILIZED VECTOR BUNDLES OVER SMOOTH ALGEBRAIC CURVES}

\author{YIFEI ZHAO}

\address{Department of Mathematics, Columbia University, New York, NY 10027 \\
yz2427@columbia.edu}

\maketitle

\begin{abstract}
Vector bundles in positive characteristics have a tendency to be destabilized after pulling back by the Frobenius morphism. In this paper, we closely examine vector bundles over curves that are, in an appropriate sense, maximally destabilized by the Frobenius morphism. Then we prove that such bundles of rank 2 exist over any curve in characteristic 3, and are unique up to twisting by a line bundle. We also give an application of such bundles to the study of ample vector bundles, which is valid in all characteristics.
\end{abstract}

\setcounter{section}{-1}
\section{Introduction}
Given a normal projective variety $X$ over an algebraically closed field $k$ of characteristic $p\neq 0$, with a fixed ample divisor, it can happen that pulling back by an inseparable morphism $f:Y\rightarrow X$ destroys semistability of vector bundles over $X$. In the simplest case, where $X$ is a smooth curve and $f=F_{\rel}$ is the (relative) Frobenius morphism, semistable vector bundles whose pullback under $f$ fail to be semistable are called \emph{Frobenius-destabilized} vector bundles. They are closely related to the study of the generalized Verschiebung.

In this paper, we are primarily interested in rank-$r$ vector bundles $E$ over a curve $X$ of genus $g\ge 2$ with the following property:
\begin{enumerate}[($*$)]
	\item the Harder-Narasimhan filtration of $F_{\rel}^*E$ admits line bundle quotients $L_1,\cdots, L_r$, and $\deg(L_i)=\deg(L_{i+1})+2g-2$ for all $i=1,\cdots, r-1$.
\end{enumerate}
If themselves semistable, such bundles are the ``most destabilized" ones possible. We will call a semistable vector bundle $E$ with property ($*$) a \emph{maximally Frobenius-destabilized} vector bundle (cf.~Def.~\ref{defn-max-destab}). These vector bundles appear in the works of many authors. Notably, over a genus-$2$ curve, all rank-$2$ Frobenius-destabilized vector bundles are maximally Frobenius-destabilized (cf.~\cite[Prop.~3.3]{Jos-Xia}); the work of S.~Mochizuki \cite{Moc} gave a precise formula counting the number of such bundles with trivial determinant over a general curve in arbitrary characteristic, and B.~Osserman \cite{Oss} counted them over an \emph{arbitrary} curve in small characteristics. In another direction, K.~Joshi \emph{et al.~}\cite{Jos} gave a relation between certain Frobenius-destabilized bundles and pre-opers, a concept originated from the geometric Langlands program; their observation was later used by X.~Sun \cite{Sun} to prove that stability is preserved under Frobenius-pushforward.

We prove that vector bundles with property ($*$) exhibit an interesting trichotomy:
\begin{thm-non}
Let $E$ be a rank-$r$ vector bundle over a curve $X$ of genus $g\ge 2$ with property ($*$). Then
\begin{enumerate}[(i)]
	\item if $r<p$ and $p$ does not divide $g-1$, then $E$ is stable (Prop.~\ref{prop-r<p});
	\item if $r>p$, then $E$ is not semistable (Prop.~\ref{prop-r>p}); and
	\item if $r=p$, then $E$ is stable if and only if $E$ is the Frobenius-pushforward of a line bundle (Prop.~\ref{prop-r=p}; this is already discovered by L.-G.~Li, H.~Lange and C.~Pauly).
\end{enumerate}
\end{thm-non}
\noindent
The result (i) reflects a general observation of K.~Joshi \emph{et al.~}that high instability of the Frobenius-pullback $F_{\rel}^*E$ implies high stability of $E$. The main technical ingredient in the proof of (i) is an improvement of an inequality due to N.~Shepherd-Barron \cite[Cor.~$2^p$]{She}: $L_{\max}(E)-L_{\min}(E)\le (r-1)(2g-2)/p$. Here $L_{\max}$ and $L_{\min}$ are measures of maximal slope of subbundles and minimal slope of quotient bundles respectively, taken over all finite pullbacks. We split Shepherd-Barron's inequality into
\begin{equation}
\label{eq-improved-ineq}
L_{\max}(E)-\mu(E),\quad\mu(E)-L_{\min}(E)\le (r-1)(g-1)/p
\end{equation}
as well as giving an equality criterion for both. As we will see, the improved inequalities \eqref{eq-improved-ineq} have many independent applications as well. The result (ii) shows that maximally Frobenius-destabilized vector bundles do not exist with rank $r>p$. For the critical value $r=p$, property ($*$) is intimately related to pre-opers; the result (iii) is known (cf.~Li \cite[Lem.~3.1]{Li} and Lange and Pauly \cite[Prop.~1.2]{Lange}) and a more general version is given by C.~Liu and M.~Zhou \cite{Liu}.

In characteristic $p=3$, B.~Osserman \cite{Oss} has constructed rank-$2$ maximally Frobenius-destabilized vector bundles with trivial determinant over an arbitrary curve of genus $2$, and showed that there are exactly $\#\Pic(X)[2]=16$ of them. We derive from the previous theorem that, for \emph{arbitrary} genus, such bundles are precisely the rank-$2$ vector bundles $E$ satisfying
\begin{equation}
\label{eq-sym-isom}
S^2(E)\cong (F_{\rel})_*T
\end{equation}
where $S^2(E)$ is the second symmetric power of $E$, and $T$ is the tangent sheaf. Then we turn \eqref{eq-sym-isom} around, and construct vector bundle $E$ whose second symmetric power is isomorphic to $(F_{\rel})_*T$. Our main result in this direction is
\begin{thm-non}[Thm.~\ref{thm-trivial-det}]
Over any curve $X$ of genus $g\ge 2$ in characteristic $3$, there exist precisely $\#\Pic(X)[2]=2^{2g}$ rank-$2$ maximally Frobenius-destabilized vector bundles of trivial determinant, and they are related to one another via twisting by an order-$2$ line bundle.
\end{thm-non}
\noindent

Y.~Wakabayashi \cite[Thm.~A]{Wak} recently gave a formula counting the number of rank-$2$ Frobenius-destabilized bundles on a general curve of genus $g$, subject to the condition $2(g-1)<p$. In contrast, our result holds for $p=3$ and arbitrary genus.

In characteristic zero, a vector bundle $E$ over a curve is ample if and only if the minimal slope of vector bundle quotients of $E$ is positive (cf.~\cite[Thm.~3.2.7]{Huy}). In positive characteristics, a numerical criterion for ample vector bundles over curves is given by H.~Brenner \cite[Thm.~2.3]{Bre} and I.~Biswas \cite{Bis}. We generalize their result to higher dimensions:
\begin{thm-non}[Thm.~\ref{thm-ample-criterion-text}]
Let $X$ be a smooth projective variety over an algebraically closed field $k$, and $E$ be a vector bundle over $X$. Then $E$ is ample if and only if there exists some real number $\varepsilon>0$, such that for any integral, closed curve $C$ in $X$, there holds $L_{\min}(E|_C)\ge\varepsilon\|C\|$.
\end{thm-non}
\noindent
Here, the norm $\|\cdot\|$ on $A_1(X)$ is fixed with respect to a chosen basis.

The improved inequalities \eqref{eq-improved-ineq} together with the above ample criterion show that semistable vector bundles with degree greater than $r(r-1)(g-1)/p$ over nonrational curves are ample. On the other hand, maximally Frobenius-destabilized vector bundles with degree less than $r(r-1)(g-1)/p$ are \emph{not} ample (Cor.~\ref{cor-deg-criterion}). We use this last observation to construct certain non-ample semistable vector bundles of positive degree.

\medskip

\noindent
\textit{Acknowledgements.} First and foremost, the author thanks A.~J.~de Jong for supervising this project and for many helpful conversations. He is also grateful to A.~Langer and H.~Brenner for suggesting many additional references, and to the anonymous referee for pointing out an error in an earlier version of this paper.

\section{Preliminaries}

\label{sec-notation} We first fix some notations. Let $X$ be a normal projective variety over an algebraically closed field $k$, and let $E$ be a torsion-free coherent sheaf over $X$. Fix an ample line bundle $H$ over $X$. With respect to $H$, we set $\mu_{\max}(E)$ to be the maximal slope of a coherent subsheaf of $E$, and $\mu_{\min}(E)$ to be the minimal slope of a torsion-free coherent quotient sheaf of $E$. If $0=E_0\subset E_1\subset\cdots\subset E_l=E$ is the Harder-Narasimhan filtration of $E$ with successive quotients $Q_i=E_i/E_{i-1}$, then
$$
\mu_{\max}(E)=\mu_1(E)>\mu_2(E)>\cdots>\mu_l(E)=\mu_{\min}(E)
$$
where $\mu_i(E)=\mu(Q_i)$ is the slope of $Q_i$. With respect to the ample line bundles pulled back from $H$, we also set
\begin{equation}
L_{\max}(E)=\sup_{f:Y\rightarrow X}\frac{\mu_{\max}(f^*E)}{\deg(f)},\quad\text{and}\quad L_{\min}(E)=\inf_{f:Y\rightarrow X}\frac{\mu_{\min}(f^*E)}{\deg(f)}
\end{equation}
where both the supremum and the infimum are taken over all finite morphisms $f:Y\rightarrow X$ of normal projective varieties over $k$. We note the following standard

\begin{lem}
\label{lem-sep-hn}
Let $f:Y\rightarrow X$ be a finite, separable morphism of normal projective varieties. Let $E$ be a torsion-free coherent sheaf over $X$ with Harder-Narasimhan filtration $0=E_0\subset E_1\subset\cdots\subset E_l=E$. Then $0=f^*E_0\subset f^*E_1\subset\cdots\subset f^*E_l=f^*E$ is the Harder-Narasimhan filtration of $f^*E$.
\end{lem}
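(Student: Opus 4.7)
The plan is to reduce the lemma to two facts about a finite separable morphism $f:Y\to X$ of normal projective varieties: (a) for any torsion-free coherent sheaf $G$ on $X$, one has $\mu(f^*G)=\deg(f)\cdot\mu(G)$; and (b) pullback by $f$ sends semistable sheaves to semistable sheaves. Granting both, applying $f^*$ termwise to the HN filtration $0=E_0\subset\cdots\subset E_l=E$ produces a filtration of $f^*E$ whose successive quotients $f^*Q_i$ are semistable by (b) and whose slopes $\deg(f)\,\mu_i(E)$ are strictly decreasing by (a). Uniqueness of the HN filtration then identifies this pulled-back filtration with the HN filtration of $f^*E$.

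Fact (a) is an immediate computation from the projection formula: with $f^*H$ as the polarization on $Y$, one has $\bigl(c_1(f^*G)\cdot (f^*H)^{n-1}\bigr)=\deg(f)\cdot\bigl(c_1(G)\cdot H^{n-1}\bigr)$, and ranks agree, so the slope identity follows.

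Fact (b) is the main content and the main anticipated obstacle. My approach is via Galois closure: let $g:Z\to X$ be the normalization of $X$ in a Galois closure of the function-field extension $K(Y)/K(X)$, with Galois group $\Gamma$ and factorization $g=f\circ h$ for some $h:Z\to Y$. Suppose for contradiction that $f^*G$ admits a destabilizing subsheaf $F\subset f^*G$; applying (a) to $h$, the image of $h^*F$ in $g^*G$ still destabilizes. Hence the maximal destabilizing subsheaf of $g^*G$ has slope strictly greater than $\mu(g^*G)=\deg(g)\,\mu(G)$. Since that maximal destabilizing subsheaf is canonically defined, it is $\Gamma$-invariant, and by Galois descent it yields a coherent subsheaf $F'\subset G$ on $X$ whose slope strictly exceeds $\mu(G)$, contradicting the assumed semistability of $G$.

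The delicate step is the descent itself: $g$ need not be flat along its ramification divisor, so one carries out faithfully flat descent on the \'etale locus $U\subseteq X$ of $g$ and then extends the resulting coherent subsheaf of $G|_U$ to a coherent subsheaf of $G$ on all of $X$ using the normality of $X$. The key point is that the intersection numbers computing slopes are insensitive to this codimension-one extension (as the ramification divisor is $\Gamma$-permuted and the scaling factor between slopes upstairs and downstairs is exactly $\deg(g)$), so the strict inequality $\mu(F')>\mu(G)$ is preserved. This is where one expects to invest the most care, but the argument is standard.
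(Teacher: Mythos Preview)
Your approach is correct and essentially the same as the paper's, which also reduces to preservation of semistability under finite separable pullback via the Galois-closure/descent argument of \cite[Lem.~3.2.2]{Huy}; the paper phrases it as ``$f^*$ preserves the maximal destabilizing subsheaf'' and iterates, while you use the equivalent characterization of the HN filtration by semistable quotients of strictly decreasing slopes. One small correction to your descent step: work over the \emph{flat} locus of $g$ (whose complement has codimension $\ge 2$, since $X$ is normal hence regular in codimension $1$ and finite morphisms to regular bases are flat) rather than the \'etale locus (whose complement, the branch divisor, has codimension $1$), so that extending the descended subsheaf across the complement genuinely leaves first Chern classes, and hence slopes, unchanged.
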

\begin{proof}[Sketch of proof]
Using an argument similar to that of \cite[Lem.~3.2.2]{Huy}, one first proves that if $E_1$ is the maximal destabilizing subsheaf of $E$, then $f^*E_1$ is the maximal destabilizing subsheaf of $f^*E$. Apply this fact with $E$ replaced by $E/E_{i}$ for all $i=1,\cdots, l-1$, and the result follows.
\end{proof}

\begin{cor}
If the ground field $k$ has characteristic zero, then $L_{\max}(E)=\mu_{\max}(E)$, and $L_{\min}(E)=\mu_{\min}(E)$.
\end{cor}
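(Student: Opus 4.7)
The plan is to observe that in characteristic zero every finite morphism of normal projective varieties is automatically separable, so Lemma~\ref{lem-sep-hn} applies to every $f$ appearing in the supremum and infimum defining $L_{\max}(E)$ and $L_{\min}(E)$. Given this, the proof reduces to a direct calculation: for any such $f:Y\to X$ with Harder-Narasimhan quotients $Q_i=E_i/E_{i-1}$ of $E$, the sheaves $f^*Q_i$ form the successive quotients of the Harder-Narasimhan filtration of $f^*E$, so it remains only to relate the slopes of $Q_i$ and $f^*Q_i$.

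The key numerical fact is that pullback along a finite morphism scales slopes by $\deg(f)$, when slopes are computed with respect to $H$ on $X$ and $f^*H$ on $Y$. Indeed, ranks are preserved, and by the projection formula
\[
c_1(f^*Q_i)\cdot (f^*H)^{\dim X-1} \;=\; f_*\bigl(f^*(c_1(Q_i)\cdot H^{\dim X-1})\bigr) \;=\; \deg(f)\cdot c_1(Q_i)\cdot H^{\dim X-1}.
\]
Hence $\mu(f^*Q_i)=\deg(f)\cdot\mu(Q_i)$ for every $i$. Taking $i=1$ and $i=l$ gives
\[
\frac{\mu_{\max}(f^*E)}{\deg(f)} \;=\; \mu_{\max}(E),\qquad \frac{\mu_{\min}(f^*E)}{\deg(f)} \;=\; \mu_{\min}(E),
\]
and since these equalities hold for \emph{every} finite $f:Y\to X$, the supremum and infimum collapse to a single value, yielding the claim.

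There is no real obstacle: the only thing one must be mildly careful about is the bookkeeping of the ample class used to compute degrees on $Y$, which is $f^*H$ by the convention set up in \S\ref{sec-notation}, and the verification of the projection-formula identity above. Everything else is formal, since separability is automatic in characteristic zero and so Lemma~\ref{lem-sep-hn} is available without hypothesis.
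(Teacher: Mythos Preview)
Your argument is correct and follows essentially the same route as the paper: in characteristic zero every finite morphism is separable, so Lemma~\ref{lem-sep-hn} applies to every $f$, and the displayed equalities for $\mu_{\max}(f^*E)/\deg(f)$ and $\mu_{\min}(f^*E)/\deg(f)$ make the supremum and infimum constant. The only difference is that you spell out the projection-formula computation showing $\mu(f^*Q_i)=\deg(f)\cdot\mu(Q_i)$, which the paper leaves implicit; as a minor point, your displayed identity literally equates a $0$-cycle on $Y$ with one on $X$, so it should be read as an equality of degrees (or with an $f_*$ applied to the left-hand side).
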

\begin{proof}
In this case, every finite morphism $f:Y\rightarrow X$ of normal projective varieties is separable. Hence the lemma shows that
$$
\frac{\mu_{\max}(f^*E)}{\deg(f)}=\mu_{\max}(E),\quad\text{and}\quad\frac{\mu_{\min}(f^*E)}{\deg(f)}=\mu_{\min}(E)
$$
The corollary then follows from the definitions of $L_{\max}(E)$ and $L_{\min}(E)$.
\end{proof}

\noindent
If the ground field $k$ has characteristic $p\neq 0$, we may let $F_{\abs}:X\rightarrow X$ be the absolute Frobenius morphism, which factors through the relative Frobenius morphism $F_{\rel}:X\rightarrow X^{(1)}$ as in the following commutative diagram:
$$
\xymatrix{X\ar@/^/[rrd]^{F_{\abs}} \ar[rd]^{F_{\rel}} \ar@/_/[rdd] & & \\
& X^{(1)}\ar[r]\ar[d] & X\ar[d] \\
& \Spec(k) \ar[r]_{a\mapsto a^p} & \Spec(k)}
$$
Here, the commutative square is a pullback diagram.

\begin{cor}
\label{cor-lmax-lmin-charp}
If the ground field $k$ has characteristic $p\neq 0$, then
\begin{equation}
\label{eq-lmax-lmin}
L_{\max}(E)=\lim_{k\rightarrow\infty}\frac{\mu_{\max}((F_{\rel}^k)^*E)}{p^k},\quad\text{and}\quad L_{\min}(E)=\lim_{k\rightarrow\infty}\frac{\mu_{\min}((F_{\rel}^k)^*E)}{p^k}
\end{equation}
\end{cor}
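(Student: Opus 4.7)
\noindent\textit{Proof plan.} I plan to verify \eqref{eq-lmax-lmin} by (a) showing the right-hand sequences are monotone and hence convergent, (b) observing one inequality is immediate by definition, and (c) reducing the reverse inequality to the separable/purely-inseparable factorization of a general finite morphism.

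For (a) and (b): if $V \subset (F_{\rel}^k)^*E$ achieves $\mu_{\max}$, then $F_{\rel}^*V \subset (F_{\rel}^{k+1})^*E$ has slope $p \cdot \mu_{\max}((F_{\rel}^k)^*E)$ (Frobenius pullback multiplies slopes by $p$ with respect to the pulled-back polarization), giving $\mu_{\max}((F_{\rel}^{k+1})^*E) \ge p \cdot \mu_{\max}((F_{\rel}^k)^*E)$; the $\mu_{\min}$ sequence is analogously non-increasing, working with torsion-free quotients in place of subsheaves. Since each $F_{\rel}^k$ is a finite morphism of degree $p^k$, both sequences are bounded: $\mu_{\max}((F_{\rel}^k)^*E)/p^k \le L_{\max}(E)$ and $\mu_{\min}((F_{\rel}^k)^*E)/p^k \ge L_{\min}(E)$, so the limits exist and one direction of \eqref{eq-lmax-lmin} follows.

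For (c), given any finite morphism $f \colon Y \to X$ of normal projective varieties, I would factor $f = g \circ h$ with $g \colon Y' \to X$ separable and $h \colon Y \to Y'$ purely inseparable of degree $p^m$. Lemma~\ref{lem-sep-hn} applied to $g$, combined with the functoriality identity $g \circ F_{Y'}^m = F_X^m \circ g$, yields $\mu_{\max}((F_{Y'}^m)^*g^*E) = \deg(g) \cdot \mu_{\max}((F_X^m)^*E)$. The crucial step is to dominate $h$ itself by Frobenius: because $k(Y)^{p^m} \subseteq k(Y')$, the $p^m$-th power ring map $k(Y) \to k(Y')$ defines a morphism $\phi \colon Y' \to Y$ satisfying $h \circ \phi = F_{Y'}^m$ (after absorbing the Frobenius twist on the base field, which is invertible since $k$ is perfect). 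Pulling back a maximal destabilizing subsheaf $V \subset f^*E$ by $\phi$ then gives
$$\deg(\phi) \cdot \mu_{\max}(f^*E) \le \mu_{\max}(\phi^*f^*E) = \mu_{\max}((F_{Y'}^m)^*g^*E) = \deg(g) \cdot \mu_{\max}((F_X^m)^*E).$$
Dividing by $\deg(f) = \deg(g) \deg(h)$ and using $\deg(\phi)\deg(h) = \deg(F_{Y'}^m) = p^m$ yields $\mu_{\max}(f^*E)/\deg(f) \le \mu_{\max}((F_X^m)^*E)/p^m$, whence $L_{\max}(E) \le \lim_k \mu_{\max}((F_{\rel}^k)^*E)/p^k$. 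The $L_{\min}$ case is dual, replacing destabilizing subsheaves with minimal-slope quotients and reversing inequalities throughout.

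The main obstacle will be constructing the morphism $\phi$ cleanly and tracking the Frobenius twist on the base field so that $h \circ \phi$ is literally an absolute Frobenius of $Y'$, together with the degree bookkeeping that makes the normalization $/p^k$ in \eqref{eq-lmax-lmin} come out correctly. This essentially amounts to the standard fact that purely inseparable morphisms between normal curves over a perfect field are themselves Frobenius powers; for higher-dimensional $X$ the same argument applies, with $p^m$ replaced by $p^{m \dim X}$.
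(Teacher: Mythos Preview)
Your proposal is correct and follows essentially the same strategy as the paper: establish monotonicity of the Frobenius sequence, then bound an arbitrary finite $f$ by some Frobenius iterate via a separable/purely-inseparable factorization together with Lemma~\ref{lem-sep-hn}. The one noteworthy difference is the order of factorization: the paper places the purely inseparable piece adjacent to $X$ (which forces a preliminary passage to a normal extension of $K(X)$), whereas you place the separable piece $g:Y'\to X$ adjacent to $X$---this factorization always exists via the separable closure, so you avoid the normality reduction, at the cost of the extra step of commuting Frobenius through $g$ via $g\circ F_{Y'}^m = F_X^m\circ g$. Both routes are standard and yield the same inequality.
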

\noindent
Observe that we may replace $F_{\rel}$ by $F_{\abs}$ in \eqref{eq-lmax-lmin} since $X$ and $X^{(1)}$ are isomorphic as schemes. This corollary shows that our definitions of $L_{\max}(E)$ and $L_{\min}(E)$ agree with those in \cite{Lan}.
\begin{proof}
Note that the sequence $\mu_{\max}((F_{\rel}^k)^*E)/p^k$ is nondecreasing. We only need to show for every finite morphism $f:Y\rightarrow X$ of normal projective varieties, there exists some $k\in\mathbb{N}$ such that
$$
\frac{\mu_{\max}(f^*E)}{\deg(f)}\le\frac{\mu_{\max}((F_{\rel}^k)^*E)}{p^k}
$$
Indeed, given such a morphism $f:Y\rightarrow X$, upon further pulling back, we may assume the induced field extension $K(X)\subset K(Y)$ is normal. Thus we may factor $f$ into the composition of a separable morphism $g:Y\rightarrow Y'$ of normal projective varieties, and a purely inseparable morphism $h:Y'\rightarrow X$; moreover, $h$ is ``dominated" by a composition of $F_{\rel}$ as illustrated in the following commutative diagram:
$$
\xymatrix{Y\ar[r]^g\ar[dr]_f & Y' \ar[d]^h & X^{(-k)}\ar[l]\ar[dl]^{F_{\rel}^k} \\
& X &}
$$
Applying Lem.~\ref{lem-sep-hn} to morphism $g$, we obtain
$$
\frac{\mu_{\max}(f^*E)}{\deg(f)}=\frac{\mu_{\max}(h^*E)}{\deg(h)}\le\frac{\mu_{\max}((F_{\rel}^k)^*E)}{p^k}
$$
as desired. The proof for the second identity is similar.
\end{proof}

 One of the most important properties of $L_{\max}(E)$ and $L_{\min}(E)$ is that they are determined after pulling back by $F_{\rel}$ finitely many times. This result is due to A.~Langer:

\begin{thm}[A.~Langer]
\label{thm-fdHN}
Let $X$ be a smooth projective variety over an algebraically closed field of positive characteristic $p$, and let $E$ be a torsion-free coherent sheaf over $X$. Then there exists a natural number $k_0$ such that if
$$
0=E_0\subset E_1\subset\cdots \subset E_l=(F_{\rel}^{k_0})^*E
$$
is the Harder-Narasimhan filtration of $(F_{\rel}^{k_0})^*E$, then for every $k\ge k_0$,
$$
0=(F_{\rel}^{k-k_0})^*E_0\subset (F_{\rel}^{k-k_0})^*E_1\subset\cdots\subset (F_{\rel}^{k-k_0})^*E_l=(F_{\rel}^k)^*E
$$
is the Harder-Narasimhan filtration of $(F_{\rel}^k)^*E$. In other words, the successive quotients $Q_i=E_i/E_{i-1}$ are strongly semistable.
\end{thm}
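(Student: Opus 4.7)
The plan is to induct on the rank $r=\rank(E)$, with the base case $r=1$ being trivial since line bundles are (strongly) semistable. For the inductive step, the crucial reduction is to show that for some sufficiently large $k_0$, the maximal destabilizing subsheaf $G$ of $(F_{\rel}^{k_0})^*E$ pulls back under $F_{\rel}$ to the maximal destabilizer of $(F_{\rel}^{k_0+1})^*E$, and similarly for all further iterations. Once this is established, one passes to $G$ and to $(F_{\rel}^{k_0})^*E/G$, both of rank strictly smaller than $r$, applies the induction hypothesis to each (at the cost of increasing $k_0$), and concatenates the two filtrations. The comparison of slopes across the junction is automatic because the slope of the max destabilizer of $(F_{\rel}^{k}\cdot)$ only grows after further Frobenius pullback, so strict inequality is preserved.

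To set up this reduction, I would first establish $L_{\max}(E)<\infty$ via a Shepherd--Barron type inequality of the form $\mu_{\max}(F_{\rel}^*F)\le p\mu_{\max}(F)+C(\rank F)$ with $C$ depending only on the rank (and the fixed polarization $H$); iterating gives a geometric series that bounds the sequence $\mu_{\max}((F_{\rel}^k)^*E)/p^k$. This sequence is also nondecreasing: if $G\subset (F_{\rel}^k)^*E$ is the max destabilizer, then $F_{\rel}^*G\subset(F_{\rel}^{k+1})^*E$ has slope $p\cdot\mu(G)$. Hence the sequence converges to $L_{\max}(E)$, and the analogous statement holds for $\mu_{\min}$ and $L_{\min}(E)$.

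The heart of the matter is to show these limits are \emph{attained}, i.e.\ the sequences are eventually constant and not merely convergent. The ranks $s_k$ of the maximal destabilizers of $(F_{\rel}^k)^*E$ lie in $\{1,\ldots,r\}$, so by pigeonhole some value $s$ is realized for infinitely many $k$. Along this subsequence, the maximal destabilizers are rank-$s$ subsheaves of bounded slope inside a bounded family of ambient sheaves (the normalized Frobenius pullbacks), and one invokes a Grothendieck--Langer boundedness result to conclude that they live in a bounded family; combined with the monotonicity established above, this forces stabilization of both the rank and the normalized slope. Once the normalized slope $\mu_{\max}((F_{\rel}^k)^*E)/p^k$ is constant and the rank $s$ is constant, uniqueness of the maximal destabilizing subsheaf of fixed rank yields $F_{\rel}^*G_k=G_{k+1}$, completing the reduction.

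The principal obstacle is this boundedness/stabilization step: ruling out a max destabilizer whose rank oscillates or whose normalized slope drifts slowly toward $L_{\max}(E)$ without ever reaching it. Numerical monotonicity alone is not enough, because the differences $L_{\max}(E)-\mu_{\max}((F_{\rel}^k)^*E)/p^k$ lie in sets whose discreteness degrades in $k$; one genuinely needs positive-characteristic boundedness of families of torsion-free sheaves with prescribed rank and slope bounds, which is the technical centerpiece of Langer's paper and which I would invoke by reference rather than reprove.
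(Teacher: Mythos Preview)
The paper does not prove this theorem at all: its entire proof reads ``This is \cite[Thm.~2.7]{Lan}.'' So you have already done more than the paper does; any comparison of approaches is between your sketch and Langer's original argument, not the paper.

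Your outline is broadly along the lines of Langer's proof (induction on rank, reduction to stabilization of the maximal destabilizer, boundedness as the engine), and you correctly identify the boundedness/stabilization step as the nontrivial input. One point to tighten: in your final sentence you say that once the normalized slope and the rank are both constant, ``uniqueness of the maximal destabilizing subsheaf of fixed rank yields $F_{\rel}^*G_k=G_{k+1}$.'' The maximal destabilizer is unique among \emph{all} subsheaves, not merely those of a fixed rank; what you actually use is that any subsheaf of maximal slope is contained in $G_{k+1}$, hence $F_{\rel}^*G_k\subset G_{k+1}$, and then equal rank and equal slope force the quotient to be torsion of degree zero. On a curve this gives equality immediately; in higher dimension one should also note that $G_{k+1}$ is saturated and $F_{\rel}^*G_k$ is torsion-free (by flatness of $F_{\rel}$), so the inclusion is an equality in codimension one and hence $F_{\rel}^*G_k$ is at least semistable, which is what the induction actually needs. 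This is a minor wrinkle, not a real gap, but worth stating cleanly if you intend to write out the argument rather than cite it.
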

\begin{proof}
This is \cite[Thm.~2.7]{Lan}.
\end{proof}

 We now assume the ground field $k$ has characteristic $p\neq 0$. Given a coherent sheaf $E$ over $X$, and a connection $\nabla:E\rightarrow E\otimes\Omega_X$, the $p$-curvature of $\nabla$ is the $p$-linear sheaf morphism $\mathrm{Der}_k(\mathcal{O}_X)\rightarrow\mathcal{E}\mathrm{nd}_{\mathcal{O}_X}(E)$ given by
$$
\theta\mapsto (\nabla_{\theta})^p-\nabla_{\theta^p}
$$
There is a canonical connection $\nabla_{\can}:F_{\rel}^*E\rightarrow F_{\rel}^*E\otimes\Omega_{X^{(-1)}}$, defined locally in a straightforward manner. A theorem of Cartier (see, for example, \cite[Thm.~5.1]{Kat}) shows the following equivalence of categories:
$$
\left( \begin{array}{c}
\text{coherent sheaves} \\
\text{over $X$} \end{array} \right)
\xrightarrow{E\mapsto (F_{\rel}^*E,\nabla_{\can})}
\left( \begin{array}{c}
\text{coherent sheaves on $X^{(-1)}$ with integrable} \\
\text{connections of vanishing $p$-curvature} \end{array} \right)
$$
An immediate consequence of Cartier's theorem is the following

\begin{lem}
\label{lem-pullback-connection}
Let $X$ be a smooth projective variety, and $E$ be a torsion-free coherent sheaf over $X$. Then for every coherent subsheaf $F\subset F_{\rel}^*E$, we have
\begin{enumerate}[(i)]
	\item $F$ is the pullback of some subsheaf of $E$ along $F_{\rel}$ if and only if $\nabla_{\can}(F)\subset F\otimes\Omega_{X^{(-1)}}$.
	
	\item If $F$ is not the pullback of any subsheaf of $E$ along $F_{\rel}$, then the induced map $\nabla_{\can}:F\rightarrow (F_{\rel}^*E/F)\otimes\Omega_{X^{(-1)}}$ is a nonzero $\mathcal{O}_{X^{(-1)}}$-linear morphism.
\end{enumerate}
\end{lem}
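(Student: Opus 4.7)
For part (i), the forward implication follows from the local description of the canonical connection. If $F = F_{\rel}^*G$ for some coherent subsheaf $G \subset E$, then locally sections of $F$ are $\mathcal{O}_{X^{(-1)}}$-linear combinations of pullbacks of sections of $G$, on which $\nabla_{\can}$ acts through the exterior derivative on the scalars alone; the image therefore lies inside $F \otimes \Omega_{X^{(-1)}}$.

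For the converse, suppose $\nabla_{\can}(F) \subset F \otimes \Omega_{X^{(-1)}}$. Then $\nabla_{\can}|_F$ is a well-defined integrable connection on the coherent sheaf $F$, and its $p$-curvature is the restriction of that of $\nabla_{\can}$ on $F_{\rel}^*E$, hence vanishes. Consequently $(F,\nabla_{\can}|_F)$ lies in the essential image of the Cartier equivalence $E \mapsto (F_{\rel}^*E, \nabla_{\can})$, so $F \cong F_{\rel}^*G$ for some coherent sheaf $G$ over $X$. The inclusion $F \hookrightarrow F_{\rel}^*E$ is horizontal, so by full faithfulness of the equivalence it equals $F_{\rel}^*\phi$ for a unique morphism $\phi: G \to E$. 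Since $F_{\rel}$ is faithfully flat on the smooth variety $X^{(-1)}$, the injectivity of $F_{\rel}^*\phi$ forces $\phi$ itself to be injective, realizing $G$ as a subsheaf of $E$.

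For part (ii), the contrapositive of (i) gives $\nabla_{\can}(F) \not\subset F \otimes \Omega_{X^{(-1)}}$, so the composition
$$
F \hookrightarrow F_{\rel}^*E \xrightarrow{\nabla_{\can}} F_{\rel}^*E \otimes \Omega_{X^{(-1)}} \twoheadrightarrow (F_{\rel}^*E/F) \otimes \Omega_{X^{(-1)}}
$$
is nonzero. Its $\mathcal{O}_{X^{(-1)}}$-linearity is the standard consequence of the Leibniz rule: for local sections $f \in \mathcal{O}_{X^{(-1)}}$ and $s \in F$, one has $\nabla_{\can}(fs) = f\,\nabla_{\can}(s) + s \otimes df$, and the correction term $s \otimes df$ already lies in $F \otimes \Omega_{X^{(-1)}}$, hence is killed in the quotient. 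The only genuine subtlety is in the converse of (i), where one needs the sheaf produced by Cartier's equivalence to embed into $E$ itself rather than merely admit a map to it; this is ensured by the faithful flatness of the Frobenius on smooth varieties.
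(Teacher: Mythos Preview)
Your proof is correct and follows essentially the same route as the paper: both use Cartier's equivalence to produce $G$ and the morphism $\phi:G\to E$, then invoke flatness of $F_{\rel}$ on a smooth variety to deduce that $\phi$ is injective, and both derive (ii) from the contrapositive of (i). Your account is in fact slightly more detailed, spelling out the Leibniz-rule argument for $\mathcal{O}_{X^{(-1)}}$-linearity where the paper simply says it ``has to be checked locally.''
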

\begin{proof}
For (i), the ``only if" direction is clear. For the ``if" direction, observe that $\nabla_{\can}$, when restricted to $F$, gives an integrable connection of vanishing $p$-curvature. By Cartier's theorem, the inclusion $F\rightarrow F^*_{\rel}E$ is the pullback of some morphism $E'\rightarrow E$. On the other hand, $F_{\rel}$ is flat when $X$ is smooth, so this morphism $E'\rightarrow E$ has to be an inclusion. (ii) follows directly from (i), except that the $\mathcal{O}_{X^{(-1)}}$-linearity has to be checked locally.
\end{proof}

\section{Maximally Frobenius-destabilized vector bundles}
In this entire section, $X$ will denote a smooth projective curve over an algebraically closed field $k$ of characteristic $p\neq 0$, with genus $g\ge 1$. For notational convenience, we will denote $X^{(-1)}$ by $Y$, thus writing the relative Frobenius morphism $F_{\rel}:Y\rightarrow X$.

We start by refining the following inequality discovered by I.~N.~Shepherd-Barron \cite[Cor.~$2^p$]{She} and generalized to higher dimensions by A.~Langer \cite[Cor.~6.2]{Lan}:
\begin{equation}
\label{eq-classical-ineq}
L_{\max}(E)-L_{\min}(E)\le\frac{(r-1)(2g-2)}{p}
\end{equation}
for any rank-$r$ semistable vector bundle $E$ over $X$.

\begin{prop}
\label{prop-langer-ineq}
Let $E$ be a semistable vector bundle over $X$ of rank $r$. Then
\begin{equation}
\label{eq-langer-ineq}
L_{\max}(E)-\mu(E)\le\frac{(r-1)(g-1)}{p}\quad\text{and}\quad \mu(E)-L_{\min}(E)\le\frac{(r-1)(g-1)}{p}
\end{equation}
Furthermore, the following are equivalent, provided $g\ge 2$:
\begin{enumerate}[(i)]
	\item $L_{\max}(E)-\mu(E)=(r-1)(g-1)/p$;
	\item $\mu(E)-L_{\min}(E)=(r-1)(g-1)/p$;
	\item $F_{\rel}^*E$ satisfies the following condition:
	\begin{equation}
	\label{eq-condition}
	\begin{array}{c}
\text{Its Harder-Narasimhan filtration consists entirely of rank-one quotients} \\
\text{$L_1,\cdots,L_r$ and $\mu(L_i)=\mu(L_{i+1})+2g-2$ for all $i=1,\cdots,r$.}\end{array}
	\end{equation}
\end{enumerate}
\end{prop}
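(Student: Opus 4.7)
The first task is to establish the ``one Frobenius pullback'' inequality $\mu_{\max}(F_{\rel}^*E) - p\mu(E) \le (r-1)(g-1)$ by combining Lem.~\ref{lem-pullback-connection} with a combinatorial estimate on the Harder-Narasimhan slopes. Let $0 = F_0 \subset F_1 \subset \cdots \subset F_l = F_{\rel}^*E$ be the HN filtration with semistable quotients $Q_i = F_i/F_{i-1}$ of rank $r_i$ and slope $\mu_i$. Semistability of $E$ forces $\mu(F_i) > p\mu(E) = \mu(F_{\rel}^*E)$ for $i<l$, so no such $F_i$ is a Frobenius pullback from $E$ (a pullback would have slope $p\mu(E') \le p\mu(E)$). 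Lem.~\ref{lem-pullback-connection}(ii) then produces a nonzero $\mathcal{O}_Y$-linear map $\nabla_{\can}: F_i \to (F_{\rel}^*E/F_i)\otimes\Omega_Y$. By the standard semistability criterion for morphisms of torsion-free sheaves,
\[
\mu_i = \mu_{\min}(F_i) \le \mu_{\max}\bigl((F_{\rel}^*E/F_i)\otimes\Omega_Y\bigr) = \mu_{i+1} + 2g-2,
\]
so $\mu_i - \mu_{i+1} \le 2g-2$. Expanding
\[
\mu_1 - p\mu(E) = \frac{1}{r}\sum_{i=1}^{l} r_i(\mu_1 - \mu_i) \le \frac{2g-2}{r}\sum_{i=1}^{l} r_i(i-1),
\]
the combinatorial maximum $\sum r_i(i-1) \le r(r-1)/2$ (subject to $r_i \ge 1$ and $\sum r_i = r$) yields $\mu_{\max}(F_{\rel}^*E) - p\mu(E) \le (r-1)(g-1)$, with equality iff $l=r$, all $r_i=1$, and $\mu_i - \mu_{i+1} = 2g-2$ for every $i$ -- exactly the condition \eqref{eq-condition}. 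The symmetric bound $p\mu(E) - \mu_{\min}(F_{\rel}^*E) \le (r-1)(g-1)$ follows by applying the dual argument to the quotients $F_{\rel}^*E/F_i$.

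\emph{Passage to $L_{\max}, L_{\min}$.} Thm.~\ref{thm-fdHN} furnishes a minimal $k_0 \ge 0$ for which the HN filtration of $V_{k_0} := (F_{\rel}^{k_0})^*E$ has strongly semistable quotients, so $L_{\max}(E) = \mu_{\max}(V_{k_0})/p^{k_0}$ by Cor.~\ref{cor-lmax-lmin-charp}. The cases $k_0 \le 1$ reduce directly to the previous paragraph. For $k_0 \ge 2$, I would argue by induction on $k$ that every rescaled adjacent gap $(\mu_i^{(k)} - \mu_{i+1}^{(k)})/p^k$ in the HN filtration of $V_k$ is at most $(2g-2)/p$. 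The inductive step hinges on the dichotomy: if $F_i \subset V_k$ is not a Frobenius pullback from $V_{k-1}$, then Lem.~\ref{lem-pullback-connection} and the first-paragraph argument applied to $V_k = F_{\rel}^*V_{k-1}$ give $\mu_i^{(k)} - \mu_{i+1}^{(k)} \le 2g-2$, whose rescaling by $p^k$ is $\le (2g-2)/p^k \le (2g-2)/p$; if instead $F_i$ is a pullback, the gap equals $p$ times the corresponding gap at level $k-1$, so the rescaled size matches that at level $k-1$ and the inductive hypothesis applies. The same combinatorial estimate, now applied to the strongly semistable HN filtration of $V_{k_0}$, then produces $L_{\max}(E) - \mu(E) \le (r-1)(g-1)/p$ and the symmetric bound for $L_{\min}$. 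This inductive separation of ``refinement'' gaps (born at some level) from ``inherited'' gaps (propagated from earlier levels) is the main technical obstacle.

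\emph{Equivalences.} For (iii) $\Rightarrow$ (i) and (ii): if $F_{\rel}^*E$ admits rank-one HN quotients $L_1, \ldots, L_r$, each $L_i$ is automatically strongly semistable (being a line bundle), so $k_0 = 1$ and $L_{\max}(E) = \deg L_1/p$, $L_{\min}(E) = \deg L_r/p$; the arithmetic progression $\deg L_i - \deg L_{i+1} = 2g-2$ then yields both equalities by direct computation. For (i) $\Rightarrow$ (iii), the chain $\mu_{\max}(F_{\rel}^*E)/p \le L_{\max}(E) \le \mu(E) + (r-1)(g-1)/p$ combined with the first-paragraph bound forces equality throughout, and the equality analysis carried out there produces \eqref{eq-condition}. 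The implication (ii) $\Rightarrow$ (iii) is obtained by the symmetric argument on the quotient filtration. The hypothesis $g \ge 2$ enters precisely to ensure $2g-2 > 0$, so that \eqref{eq-condition} describes a genuine (nonconstant) arithmetic progression and the strict HN inequalities $\mu_i > \mu_{i+1}$ are compatible with the upper bound $\mu_i - \mu_{i+1} \le 2g-2$.
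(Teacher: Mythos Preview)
Your first paragraph---bounding adjacent HN gaps of $F_{\rel}^*E$ by $2g-2$ via the canonical connection and then optimizing combinatorially---is correct and coincides with the paper's combinatorial claim. The divergence is in how you pass from the level-one bound to $L_{\max}$ and $L_{\min}$. The paper does not induct on $k$; it works directly at a fixed level $k$ (where $L_{\max}$ is realized) using a single ``multi-connection''
\[
\eta=\bigl(\nabla_{\can},\,F_{\rel}^*\nabla_{\can},\,\ldots,\,(F_{\rel}^{k-1})^*\nabla_{\can}\bigr)
\]
with values in $\bigoplus_{j=0}^{k-1}(F_{\rel}^j)^*\Omega$. Semistability of $E$ prevents any HN piece $E_i$ of $(F_{\rel}^k)^*E$ from descending all the way to $E$, so at least one component of $\eta$ is nonzero on $E_i$; this yields $\mu_i-\mu_{i+1}\le p^{k-1}(2g-2)$ in one stroke, and equality forces only the top component to survive, i.e.\ $E_i$ descends to level~$1$.

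Your inductive route can be made to work but needs two repairs. In Case~2 of your dichotomy, the gap does not ``equal $p$ times the corresponding gap at level $k-1$'': if $F_i=F_{\rel}^*G$ then $\mu_i^{(k)}=\mu_{\min}(F_{\rel}^*G)\le p\,\mu_{\min}(G)$ and $\mu_{i+1}^{(k)}=\mu_{\max}\bigl(F_{\rel}^*(V_{k-1}/G)\bigr)\ge p\,\mu_{\max}(V_{k-1}/G)$, so the gap is only \emph{at most} $p$ times the level-$(k{-}1)$ gap---which still suffices for the inequality, once you also check (as these same inequalities imply) that such a $G$ is itself an HN piece of $V_{k-1}$. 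More seriously, your argument for (i)~$\Rightarrow$~(iii) is flawed as written: from $\mu_{\max}(F_{\rel}^*E)/p\le L_{\max}(E)=\mu(E)+(r-1)(g-1)/p$ you cannot conclude equality in the first inequality, so you cannot simply invoke the level-one equality case. The correct argument in your framework is to run the equality analysis at level $k_0$: equality in (i) forces all HN quotients of $V_{k_0}$ to have rank~$1$ and all rescaled gaps to equal $(2g-2)/p$; for $k_0\ge 2$ the gaps then exceed $2g-2$, placing every $F_i$ in Case~2, so the entire HN filtration of $V_{k_0}$ descends to $V_{k_0-1}$ with line-bundle quotients, contradicting minimality of $k_0$. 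Hence $k_0=1$ and your first-paragraph equality analysis applies.
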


\noindent
We will see later (Prop.~\ref{prop-r>p}) that (iii) cannot be satisfied for semistable vector bundles $E$ of rank $r>p$. Hence the inequalities \eqref{eq-langer-ineq} are not sharp for $r>p$. In that case, we will supply a better inequality (Prop.~\ref{prop-langer-ineq-r>p}).

\begin{proof}
We start with a technique used by Langer in the proof of \cite[Cor.~6.2]{Lan}. By Thm.~\ref{thm-fdHN}, we may fix some sufficiently large $k\in\mathbb{N}$ such that
$$
L_{\max}(E)=\frac{\mu_{\max}((F_{\rel}^k)^*E)}{p^k}\quad\text{and}\quad L_{\min}(E)=\frac{\mu_{\min}((F_{\rel}^k)^*E)}{p^k}
$$
There is a connection
$$
\eta:(F_{\rel}^k)^*E\rightarrow(F_{\rel}^k)^*E\otimes(\Omega_{X^{(-k)}}\oplus\cdots\oplus(F_{\rel}^{k-1})^*\Omega_{Y})
$$
given by $\eta=(\nabla_{\can}, \cdots, (F^{k-1})^*\nabla_{\can})$, where $\nabla_{\can}$ is the canonical connection induced by the Frobenius morphism. Let $0=E_0\subset\cdots\subset E_l=(F_{\rel}^k)^*E$ be the Harder-Narasimhan filtration of $(F_{\rel}^k)^*E$. Since $E$ is semistable, none of the $E_i$'s are pullbacks of subbundles of $E$ by $F_{\rel}^k$. By Lem.~\ref{lem-pullback-connection}, there is a nonzero $\mathcal{O}_{X^{(-k)}}$-linear morphism for each $E_i$:
$$
E_i\rightarrow((F_{\rel}^k)^*E/E_i)\otimes(\Omega_{X^{(-k)}}\oplus\cdots\oplus(F_{\rel}^{k-1})^*\Omega_{Y})
$$
i.e.~given each $i$, there is a nonzero morphism $E_i\rightarrow((F_{\rel}^k)^*E/E_i)\otimes(F_{\rel}^j)^*\Omega_{X^{(j-k)}}$ for some $j=0,\cdots,k-1$. Thus
$$
\mu_i((F_{\rel}^k)^*E)\le\mu_{i+1}((F_{\rel}^k)^*E)+p^j(2g-2)\le\mu_{i+1}((F_{\rel}^k)^*E)+p^{k-1}(2g-2)
$$
where the second equality is attained if and only if $E_i$ is the pullback of some subbundle of $F_{\rel}^*E$ by $F_{\rel}^{k-1}$.

\begin{claim}[cf.~\cite{Jos}, Lem.~4.2.4]
Fix $\mu,\eta\in\RR$ and $r\in\NN$; assume $\eta\ge 0$. For any sequence of real numbers $\mu_1>\mu_2>\cdots>\mu_l$ with $\mu_i\le\mu_{i+1}+\eta$, and any sequence of natural numbers $r_1,\cdots,r_l$ of indeterminate length such that $r_1+\cdots+r_l=r$ and $r_1\mu_1+\cdots+r_l\mu_l=r\mu$, there hold
\begin{equation}
\label{eq-elementary-ineq}
\mu_1\le\mu+\frac{(r-1)\eta}{2} \quad\text{and}\quad\mu_l\ge\mu-\frac{(r-1)\eta}{2}
\end{equation}
Furthermore, the following are equivalent if we assume $\eta>0$:
\begin{enumerate}[(i)]
	\item $\mu_1=\mu+(r-1)\eta/2$;
	\item $\mu_l=\mu-(r-1)\eta/2$;
	\item $l=r$, $r_i=1$ for all $i$, and $\mu_i=\mu_{i+1}+\eta$.
\end{enumerate}
\end{claim}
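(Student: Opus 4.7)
The plan is to reduce both inequalities to a purely combinatorial extremal problem by exploiting the constraint $\sum r_i \mu_i = r\mu$. The staircase hypothesis $\mu_i \le \mu_{i+1}+\eta$ telescopes to $\mu_1 - \mu_i \le (i-1)\eta$ for every $i$, so using $\sum r_i = r$ and $\sum r_i \mu_i = r\mu$ I would write
$$r(\mu_1-\mu) \;=\; \sum_{i=1}^{l} r_i(\mu_1-\mu_i) \;\le\; \eta \sum_{i=1}^{l} r_i(i-1).$$
The first estimate in \eqref{eq-elementary-ineq} thus reduces to the combinatorial bound $\sum_i r_i(i-1)\le r(r-1)/2$. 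The second estimate is entirely symmetric: telescoping from the bottom gives $\mu_i - \mu_l \le (l-i)\eta$, and the same averaging bounds $r(\mu - \mu_l)$ by $\eta\sum_i r_i(l-i)$, to which the analogous combinatorial bound applies.

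The combinatorial step is the only nontrivial piece, and I would prove it by reinterpreting $\sum_i r_i(i-1)$ as the total of the weakly increasing list
$$(\underbrace{0,\ldots,0}_{r_1},\underbrace{1,\ldots,1}_{r_2},\ldots,\underbrace{l-1,\ldots,l-1}_{r_l})$$
of $r$ integers, in which the value $i-1$ appears $r_i$ times. Since each $r_i\ge 1$, the cumulative counts satisfy $r_1+\cdots+r_k \ge k$, so the $j$-th entry of the list is at most $j-1$. Summing gives $\sum_i r_i(i-1) \le 0+1+\cdots+(r-1) = r(r-1)/2$, and equality forces the $j$-th entry to equal $j-1$ for every $j$, hence $l=r$ and $r_i=1$ for all $i$.

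For the equivalence of (i), (ii), (iii) under $\eta>0$, the implication (iii)$\Rightarrow$(i),(ii) is a direct arithmetic check: with $l=r$, $r_i=1$, and $\mu_i-\mu_{i+1}=\eta$ one has $\mu_i=\mu_1-(i-1)\eta$, and the constraint $\sum\mu_i=r\mu$ pins down $\mu_1=\mu+(r-1)\eta/2$ and $\mu_l=\mu-(r-1)\eta/2$. For the converse, if equality holds in (i) then both inequalities displayed above must be equalities: the first (together with $r_i\ge 1$ and $\eta>0$) forces $\mu_1-\mu_i=(i-1)\eta$ for every $i$, hence $\mu_i-\mu_{i+1}=\eta$; the combinatorial equality then forces $l=r$ and every $r_i=1$. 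The derivation of (iii) from (ii) is identical by symmetry. The only genuinely subtle point I anticipate is establishing uniqueness of the combinatorial maximum cleanly; once that is in hand, everything else is bookkeeping against the two defining constraints.
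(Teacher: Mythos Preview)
Your proof is correct and follows the same strategy as the paper: telescope the staircase hypothesis to reduce $\mu_1-\mu$ (and symmetrically $\mu-\mu_l$) to the combinatorial bound $\sum_i r_i(i-1)\le r(r-1)/2$, then read off the equality case. The only difference is cosmetic---the paper bounds $\sum_i r_i(i-1)$ by first maximizing over the $r_i$ for fixed $l$ and then over $l$, whereas your sorted-list argument (the $j$-th entry of the multiset is at most $j-1$) reaches the same bound and its equality characterization $l=r$, $r_i=1$ in a single cleaner step.
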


\noindent
Assuming the claim and applying it to $\mu=\mu((F_{\rel}^k)^*E)$, $\eta=p^{k-1}(2g-2)$, $\mu_i=\mu_i((F_{\rel}^k)^*E)$, and $r_i=\rank(E_i)$, the inequalities \eqref{eq-langer-ineq} readily follow. Furthermore, in the $g\ge 2$ case, both equalities are attained when each $E_i$ is pullback of a subbundle of $F_{\rel}^*E$ by $F_{\rel}^{k-1}$, and the Harder-Narasimhan filtration of $F_{\rel}^*E$ must have rank-one quotients $L_1,\cdots,L_r$, with $\mu(L_i)=\mu(L_{i+1})+2(g-1)$.

We now prove the elementary claim. First, if $\eta=0$, then $l=1$ and $\mu=\mu_1=\mu_l$; the inequalities \eqref{eq-elementary-ineq} are trivially satisfied. We now assume $\eta>0$. Indeed, using $\mu_i\ge\mu_1-(i-1)\eta$, we find
$$
\mu=\frac{1}{r}(r_1\mu_1+\cdots+r_l\mu_l)\ge\mu_1-\frac{\eta}{r}(r_2+2r_3+\cdots+(l-1)r_l)
$$
where equality is attained if and only if $\mu_i=\mu_{i+1}+\eta$ for each $i$. If $l=1$, we obtain strict inequalities in \eqref{eq-elementary-ineq} thank to the $\eta>0$ assumption. Now, for any $l=2,\cdots,r$, the maximum of $r_2+2r_3+\cdots+(l-1)r_l$, subject to the condition $r_1+r_2+\cdots+r_l=r$, is obtained when $r_1=\cdots=r_{l-1}=1$ and $r_l=r-l+1$. Altogether,
$$
r_2+2r_3+\cdots+(l-1)r_l\le-\frac{1}{2}l^2+\left(r+\frac{1}{2}\right)l-r\le\frac{1}{2}r(r-1)
$$
where the last equality is attained when $l=r$. Summarizing these inequalities, we get
$$
\mu_1\le\mu+\frac{\eta}{r}\cdot\frac{1}{2}r(r-1)=\mu+\frac{(r-1)\eta}{2}
$$
and the equivalence of (i) and (iii). The other inequality in \eqref{eq-elementary-ineq} and the equivalence of (ii) and (iii) can be deduced in a similar manner.
\end{proof}

\begin{rem}
\label{rem-max-destab}
In the last condition (iii) in Prop.~\ref{prop-langer-ineq}, the numerical equation $\mu(L_i)=\mu(L_{i+1})+2g-2$ is equivalent to $L_i\cong L_{i+1}\otimes\Omega_Y$ (provided that $E$ is semistable). This can be seen as a consequence of the following easy
\end{rem}

\begin{lem}
\label{lem-condition-equiv}
Suppose $g\ge 2$, and $F_{\rel}^*E$ satisfies \eqref{eq-condition}, and none of the subbundles $E_i$ $(1\le i\le r-1)$ in the Harder-Narasimhan filtration of $F_{\rel}^*E$ occurs as pullback of a subbundle of $E$ along $F_{\rel}$. Then $\nabla_{\can}$ maps $E_i$ to $E_{i+1}\otimes\Omega_Y$, and induces isomorphism $L_i\xrightarrow{\sim} L_{i+1}\otimes\Omega_Y$ for all rank-one quotients $L_i=E_i/E_{i-1}$.
\end{lem}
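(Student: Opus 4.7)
The plan is to combine Lemma~\ref{lem-pullback-connection} with a slope comparison forced by condition \eqref{eq-condition}. Fix $i$ with $1\le i\le r-1$. Since $E_i$ is not a pullback of a subbundle of $E$ along $F_{\rel}$, Lemma~\ref{lem-pullback-connection}(ii) furnishes a nonzero $\mathcal{O}_Y$-linear morphism $\bar\nabla_i\colon E_i\to (F_{\rel}^*E/E_i)\otimes\Omega_Y$. My first goal is to show that $\bar\nabla_i$ factors through the subsheaf $L_{i+1}\otimes\Omega_Y=(E_{i+1}/E_i)\otimes\Omega_Y$, which is equivalent to $\nabla_{\can}(E_i)\subset E_{i+1}\otimes\Omega_Y$.

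For $i<r-1$, I would argue by contradiction: suppose the composition
\[
E_i\xrightarrow{\bar\nabla_i}(F_{\rel}^*E/E_i)\otimes\Omega_Y\twoheadrightarrow (F_{\rel}^*E/E_{i+1})\otimes\Omega_Y
\]
is nonzero. The Harder--Narasimhan quotients of the source are $L_1,\dots,L_i$, so $\mu_{\min}(E_i)=\mu(L_i)$, while those of the target are $L_{i+2}\otimes\Omega_Y,\dots,L_r\otimes\Omega_Y$, whose top slope is $\mu(L_{i+2})+(2g-2)$; telescoping condition \eqref{eq-condition} gives $\mu(L_{i+2})+(2g-2)=\mu(L_i)-(2g-2)$. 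Any nonzero morphism of torsion-free sheaves on a smooth curve requires $\mu_{\min}$ of the source to be at most $\mu_{\max}$ of the target, so $\mu(L_i)\le\mu(L_i)-(2g-2)$, contradicting $g\ge 2$. The boundary case $i=r-1$ is automatic since then $F_{\rel}^*E/E_i=L_r$ already.

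For the induced isomorphism statement, applying the first conclusion to $i-1$ (trivial for $i=1$, as $E_0=0$) yields $\nabla_{\can}(E_{i-1})\subset E_i\otimes\Omega_Y$, so $\nabla_{\can}$ descends to an $\mathcal{O}_Y$-linear map $\bar\phi_i\colon L_i\to L_{i+1}\otimes\Omega_Y$. If $\bar\phi_i=0$, then $\nabla_{\can}(E_i)\subset E_i\otimes\Omega_Y$, and Lemma~\ref{lem-pullback-connection}(i) would force $E_i$ to be a pullback of a subbundle of $E$ along $F_{\rel}$, contradicting the hypothesis. Hence $\bar\phi_i\ne 0$, and since $L_i$ and $L_{i+1}\otimes\Omega_Y$ are line bundles of equal degree (by condition \eqref{eq-condition}) on the smooth projective curve $Y$, $\bar\phi_i$ must be an isomorphism. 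The only subtlety---the step I regard as the main obstacle---is the bookkeeping observation that one must project \emph{one step further} than the naive target $L_{i+1}\otimes\Omega_Y$, all the way down to $F_{\rel}^*E/E_{i+1}$, so as to turn the marginal equality $\mu(L_i)=\mu(L_{i+1}\otimes\Omega_Y)$ into a strict slope incompatibility.
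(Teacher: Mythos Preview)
Your argument is correct and follows essentially the same route as the paper: both use Lemma~\ref{lem-pullback-connection} to obtain the nonzero map $E_i\to(F_{\rel}^*E/E_i)\otimes\Omega_Y$, then a slope comparison forced by condition~\eqref{eq-condition} to show the image lands in $(E_{i+1}/E_i)\otimes\Omega_Y$, and finally the equal-degree line-bundle observation for the isomorphism. The only cosmetic differences are that the paper packages the slope step by introducing the largest index $j$ with image not contained in $(E_j/E_i)\otimes\Omega_Y$ (and then shows $j=i$), whereas you project directly onto $(F_{\rel}^*E/E_{i+1})\otimes\Omega_Y$ and derive a contradiction; and the paper deduces $\bar\phi_i\ne 0$ from the already-established nonvanishing of $\bar\nabla_i$, while you re-invoke Lemma~\ref{lem-pullback-connection}(i)---both are equally valid.
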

\begin{proof}
By hypothesis and Lem.~\ref{lem-pullback-connection}, we have a nonzero morphism $E_i\rightarrow (F_{\rel}^*E/E_i)\otimes\Omega_Y$. Let $j$ be the largest integer such that the image of $E_i$ is not contained in $(E_j/E_i)\otimes\Omega_Y$; in particular, $i\le j\le r$. Then we have an induced nonzero morphism
$$
E_i\rightarrow(E_{j+1}/E_j)\otimes\Omega_Y
$$
and it follows that $\mu(L_i)\le\mu(L_{j+1})+2g-2$. Given $\mu(L_i)=\mu(L_{i+1})+2g-2$ for all $i$, we must have $i=j$. Consequently, $\nabla_{\can}$ maps $E_i$ to $E_{i+1}\otimes\Omega_Y$, and induces a nonzero morphism $E_i\rightarrow L_{i+1}\otimes\Omega_Y$. Since $\nabla_{\can}$ also maps $E_{i-1}$ to $E_i\otimes\Omega_Y$, the above morphism $E_i\rightarrow L_{i+1}\otimes\Omega_Y$ vanishes on $E_{i-1}$. Thus we obtain a nonzero morphism $L_i\rightarrow L_{i+1}\otimes\Omega_Y$ of line bundles with the same degree, and as such, it must be an isomorphism.
\end{proof}

 Vector bundles $E$ whose pullbacks $F_{\rel}^*E$ satisfy \eqref{eq-condition} display an interesting trichotomy corresponding to the relative size of their rank $r$ and the characteristic $p$. The following lemma is necessary for our discussion of the $r<p$ case.

\begin{lem}
\label{lem-not-contain}
Let $E$ be a vector bundle over $X$, whose pullback $F_{\rel}^*E$ has Harder-Narasimhan filtration $0=E_0\subset E_1\subset\cdots\subset E_l=F_{\rel}^*E$. If the canonical connection $\nabla_{\can}$ induces isomorphisms $E_i/E_{i-1}\xrightarrow{\sim} E_{i+1}/E_i\otimes\Omega_Y$ for all $i$, then $F^*_{\rel}E'\not\subset E_{l-1}$ for every subbundle $E'\subset E$.
\end{lem}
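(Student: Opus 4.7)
The plan is to argue by contradiction. Suppose there exists a nonzero subbundle $E' \subset E$ with $F_{\rel}^*E' \subset E_{l-1}$, and let $j$ be the smallest index for which $F_{\rel}^*E' \subset E_j$. By the contradiction hypothesis $1 \le j \le l-1$, so the hypothesized isomorphism $\bar{\nabla}_j \colon E_j/E_{j-1} \xrightarrow{\sim} (E_{j+1}/E_j) \otimes \Omega_Y$ is available at index $j$. Implicit in this hypothesis is the inclusion $\nabla_{\can}(E_j) \subset E_{j+1} \otimes \Omega_Y$, and $\bar\nabla_j$ is obtained by restricting $\nabla_{\can}$ to $E_j$, projecting to $(E_{j+1}/E_j) \otimes \Omega_Y$ on the target, and passing to the quotient by $E_{j-1}$ on the source (which is killed because $\nabla_{\can}(E_{j-1}) \subset E_j \otimes \Omega_Y$ by the same hypothesis applied to index $j-1$).

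The key step is to compute the composite
$$
\phi \colon F_{\rel}^*E' \xrightarrow{\nabla_{\can}} E_{j+1} \otimes \Omega_Y \twoheadrightarrow (E_{j+1}/E_j) \otimes \Omega_Y
$$
in two different ways. On the one hand, since $F_{\rel}^*E'$ is by construction a Frobenius pullback, Lem.~\ref{lem-pullback-connection}(i) yields $\nabla_{\can}(F_{\rel}^*E') \subset F_{\rel}^*E' \otimes \Omega_Y$; because $F_{\rel}^*E' \subset E_j$, the image already lies in $E_j \otimes \Omega_Y$ and hence is annihilated by the final projection, so $\phi = 0$. On the other hand, the description of $\bar\nabla_j$ above identifies $\phi$ with the composite $F_{\rel}^*E' \hookrightarrow E_j \twoheadrightarrow E_j/E_{j-1} \xrightarrow{\bar\nabla_j} (E_{j+1}/E_j) \otimes \Omega_Y$. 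By the minimality of $j$ the image of $F_{\rel}^*E'$ in $E_j/E_{j-1}$ is nonzero, and since $\bar\nabla_j$ is an isomorphism this forces $\phi \ne 0$, a contradiction.

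Conceptually, the argument exploits the tension between the horizontal flatness of $\nabla_{\can}$ on Frobenius pullbacks (Lem.~\ref{lem-pullback-connection}) and the fact that each graded piece of the HN filtration of $F_{\rel}^*E$ is carried isomorphically onto the next via $\nabla_{\can}$; a hypothetical Frobenius-pulled-back subbundle contained in $E_{l-1}$ would be forced to be simultaneously $\nabla_{\can}$-stable and strictly shifted upward through the filtration, which is impossible. The main obstacle, a mild one, is keeping straight the two incarnations of $\phi$ so that the zero and nonzero calculations really refer to the same morphism; once the factorization of $\bar\nabla_j$ through $\nabla_{\can}|_{E_j}$ is recorded, the contradiction is purely formal.
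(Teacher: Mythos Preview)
Your proof is correct and essentially identical to the paper's: both take the minimal index $j$ with $F_{\rel}^*E'\subset E_j$, and derive a contradiction by computing the map $F_{\rel}^*E'\to (E_{j+1}/E_j)\otimes\Omega_Y$ induced by $\nabla_{\can}$ in two ways (zero because $F_{\rel}^*E'$ is $\nabla_{\can}$-stable and sits in $E_j$, nonzero because the projection to $E_j/E_{j-1}$ is nonzero and $\bar\nabla_j$ is an isomorphism). The paper packages this as a commutative square, while you spell out the factorization of $\phi$ explicitly, but the content is the same.
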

\begin{proof}
Suppose the converse that $F^*_{\rel}E'\subset E_i$ for some $i\le l-1$, and $F^*_{\rel}E'\not\subset E_{i-1}$. Then we have a commutative square
$$
\xymatrix{F_{\rel}^*E'\ar[r]^{\text{nonzero}\quad\quad}\ar[d]_{\nabla_{\can}} & E_i/E_{i-1}\otimes\Omega_Y \ar@{=}[d]^{\nabla_{\can}} \\ F_{\rel}^*E'\otimes\Omega_X\ar[r]^{\text{zero}\quad} & E_{i+1}/E_i\otimes\Omega_Y}
$$
which gives a contradiction since the upper composition is nonzero, but the lower one is zero.
\end{proof}

\begin{prop}[$r<p$ case]
\label{prop-r<p}
Suppose $g\ge 2$ and $g-1$ is not divisible by $p$. Let $E$ be a vector bundle over $X$ of rank $r<p$ such that $F_{\rel}^*E$ satisfies \eqref{eq-condition}, then $E$ is stable.
\end{prop}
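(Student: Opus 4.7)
My plan is to show that every proper subbundle $E'\subset E$ satisfies $\mu(E')<\mu(E)$ strictly. I will proceed in three steps: a divisibility check to invoke Lemma~\ref{lem-condition-equiv}, a propagation argument using the canonical connection on $F_{\rel}^*E'$, and a direct degree comparison.

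First, I want to show that no subbundle $E_i$ ($1\le i\le r-1$) in the Harder--Narasimhan filtration of $F_{\rel}^*E$ is a Frobenius pullback. Since any pullback has degree divisible by $p$, it suffices to check that $p\nmid\deg E_i$. Condition~\eqref{eq-condition} places the $\deg L_j$ in an arithmetic progression with common difference $-(2g-2)$, and the identity $\deg F_{\rel}^*E=p\deg E$ combined with $\gcd(r,p)=1$ (from $r<p$) forces $\deg L_r\equiv -(r-1)(g-1)\pmod p$. A short calculation then yields
$$
\deg E_i=\sum_{j=1}^i\deg L_j\equiv i(r-i)(g-1)\pmod p.
$$
Since $1\le i,r-i<p$ and $p\nmid g-1$, each factor is invertible modulo $p$, so $p\nmid\deg E_i$. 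Lemma~\ref{lem-condition-equiv} therefore applies and gives both $\nabla_{\can}(E_i)\subset E_{i+1}\otimes\Omega_Y$ and the isomorphisms $\bar\nabla_{\can}:L_i\xrightarrow{\sim}L_{i+1}\otimes\Omega_Y$ for $1\le i<r$.

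Next, I fix a proper subbundle $E'\subset E$ of rank $r'$, set $F=F_{\rel}^*E'\subset F_{\rel}^*E$, and examine the intersection filtration $F_i=F\cap E_i$, whose graded pieces $M_i=F_i/F_{i-1}$ either vanish or inject as sub-line-bundles of $L_i$. Let $S=\{i:M_i\ne 0\}$, so $|S|=r'$. The key claim I want to prove is the \emph{propagation property}: if $i\in S$ and $i<r$, then $i+1\in S$. Because $F$ is preserved by $\nabla_{\can}$, the connection carries $F_i$ into $F_{i+1}\otimes\Omega_Y$ and induces an $\OO_Y$-linear map $M_i\to M_{i+1}\otimes\Omega_Y$. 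This map fits in a commutative square with the ambient isomorphism $\bar\nabla_{\can}:L_i\xrightarrow{\sim}L_{i+1}\otimes\Omega_Y$, so the composition $M_i\hookrightarrow L_i\xrightarrow{\sim}L_{i+1}\otimes\Omega_Y$ is injective and factors through $M_{i+1}\otimes\Omega_Y$; hence $M_{i+1}\ne 0$. Iterating forces $S=\{r-r'+1,\ldots,r\}$.

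Finally, the bounds $\deg F\le\sum_{i\in S}\deg L_i=r'\deg L_r+r'(r'-1)(g-1)$ and $\deg F_{\rel}^*E=r\deg L_r+r(r-1)(g-1)$ combine to give $r\deg F-r'\deg F_{\rel}^*E\le -rr'(r-r')(g-1)<0$, which yields $\mu(E')<\mu(E)$ after dividing by $p$. I expect the propagation claim to be the main technical point: it is where the exact slope gaps in condition~\eqref{eq-condition} and the isomorphism criterion from Lemma~\ref{lem-condition-equiv} combine decisively. Without the isomorphism $\bar\nabla_{\can}:L_i\xrightarrow{\sim}L_{i+1}\otimes\Omega_Y$, the induced map $M_i\to M_{i+1}\otimes\Omega_Y$ could vanish and the inductive step would break; and without the divisibility check in step one, Lemma~\ref{lem-condition-equiv} itself would be inaccessible.
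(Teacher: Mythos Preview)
Your proof is correct, and your first step (the divisibility check) coincides with the paper's. The second half, however, follows a genuinely different route. The paper invokes Lemma~\ref{lem-not-contain} only to conclude $F_{\rel}^*E'\not\subset E_{r-1}$, obtaining a nonzero map $F_{\rel}^*E'\to L_r$; it then bounds $\mu(E')$ via the refined inequality of Prop.~\ref{prop-langer-ineq} applied to a \emph{stable} subbundle $E'$, which in turn rests on Langer's Thm.~\ref{thm-fdHN}. Your propagation argument sharpens Lemma~\ref{lem-not-contain} to pin down the entire set $S=\{r-r'+1,\dots,r\}$, and then the degree bound $\deg F_{\rel}^*E'\le\sum_{i\in S}\deg L_i$ yields $\mu(E')<\mu(E)$ by elementary arithmetic. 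This buys you two things: you never need $L_{\min}$, Prop.~\ref{prop-langer-ineq}, or Langer's theorem, and your argument applies to an arbitrary proper subbundle rather than only to stable ones. The paper's route, on the other hand, illustrates how the improved inequalities~\eqref{eq-langer-ineq} feed back into the structure theory and keeps the slope estimate uniform with the rest of the paper's framework.
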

\begin{proof}
Let $0=E_0\subset E_1\subset\cdots \subset E_r=F_{\rel}^*E$ be the Harder-Narasimhan filtration of $F_{\rel}^*E$, with rank-one quotients $L_1,\cdots, L_r$. We first claim that under the hypothesis, none of the subbundles $E_i$, for $i=1,\cdots,r-1$, is a pullback along $F_{\rel}$. Indeed, since $\deg
(L_i)=\deg(L_{i+1})+2g-2$, we find
\begin{equation}
\label{eq-deg-ei}
\deg(E_i)=\sum_{j=1}^i\deg(L_j)=i\deg(L_1)-i(i-1)(g-1)
\end{equation}
In particular, for $i=r$, we obtain $\deg(L_1)=\mu(F_{\rel}^*E)+(r-1)(g-1)$. Substitute this equation back into \eqref{eq-deg-ei} and simplify, we find
$$
\deg(E_i)=i\mu(F_{\rel}^*E)+i(r-i)(g-1)=\frac{ip\deg(E)}{r}+i(r-i)(g-1)
$$
Now, suppose to the contrary of the claim that $E_i=F^*E'$ for some subbundle $E'$ of $E$. Then
\begin{equation}
\label{eq-deg-g}
\deg(E')=\frac{i\deg(E)}{r}+\frac{i(r-i)(g-1)}{p}
\end{equation}
is an integer. However, by the hypothesis, $p$ does not divide $i(r-i)(g-1)$ for all $i=1,\cdots,r-1$, and $r$ is coprime to $p$. Hence \eqref{eq-deg-g} cannot possibly be an integer, and the claim is proved.

Now, let $E'$ be any \emph{stable} proper subbundle of $E$. It suffices to show that $\mu(E')<\mu(E)$. Lem.~\ref{lem-condition-equiv} shows that $\nabla_{\can}$ induces isomorphisms $L_i\xrightarrow{\sim}L_{i+1}\otimes\Omega_Y$ for all $i$. Therefore, by Lem.~\ref{lem-not-contain}, $F_{\rel}^*E'\not\subset E_{r-1}$, and we obtain a nonzero morphism $F_{\rel}^*E'\rightarrow L_r$. Hence
\begin{equation}
\label{eq-lmin-e'}
L_{\min}(E')\le\frac{\mu_{\min}(F_{\rel}^*E')}{p}\le\frac{\deg(L_r)}{p}
\end{equation}
A computation similar to \eqref{eq-deg-ei} shows that
\begin{equation}
\label{eq-deg-lr}
p\deg(E)=\deg(F_{\rel}^*E)=r\deg(L_r)+r(r-1)(g-1)
\end{equation}
Let $r'$ be the rank of $E'$. Then the second inequality in Prop.~\ref{prop-langer-ineq}, applied to the stable bundle $E'$, shows that
$$
\mu(E')-\frac{(r'-1)(g-1)}{p}\le L_{\min}(E')\le\frac{\deg(L_r)}{p}\le\frac{\deg(E)}{r}-\frac{(r-1)(g-1)}{p}
$$
where in the second and third inequalities, we used \eqref{eq-lmin-e'} and \eqref{eq-deg-lr}, respectively. Simplify this inequality, and we obtain
$$
\mu(E')\le\mu(E)-\frac{(r-r')(g-1)}{p}<\mu(E)
$$
by the hypotheses that $g\ge 2$ and $r>r'$.
\end{proof}

\noindent
The condition that $g-1$ is not divisible by $p$ is necessary in Prop.~\ref{prop-r<p}. Indeed, one can construct counterexamples to Prop.~\ref{prop-r<p} when this condition is dropped.

\begin{rem}
\label{rem-none-hn-stable}
Suppose $g\ge 2$. Let $E$ be a vector bundle over $X$ such that $F_{\rel}^*E$ satisfies \eqref{eq-condition}. Let $0=E_0\subset E_1\subset\cdots\subset E_r=F_{\rel}^*E$ be its Harder-Narasimhan filtration. The proof of Prop.~\ref{prop-r<p} shows that if none of the $E_i$'s ($1\le i\le r-1$) comes from pulling back along $F_{\rel}$, then $E$ is stable.

Suppose now that $E_{i_j}=F_{\rel}^*G_j$ are those subbundles coming from pullbacks. Applying the argument to $E_{i_{j+1}}/E_{i_j}=F^*(G_{j+1}/G_j)$ shows that $G_{j+1}/G_j$ is stable. Furthermore, the inequality
$$
\mu(G_j/G_{j-1})=\frac{\mu(E_{i_j}/E_{i_{j-1}})}{p}>\frac{\mu(E_{i_{j+1}}/E_{i_j})}{p}=\mu(G_{j+1}/G_j)
$$
proves that $0=G_0\subset G_1\subset\cdots\subset G_j\subset\cdots\subset E$ is in fact the Harder-Narasimhan filtration of $E$, and has stable quotients.
\end{rem}

\begin{prop}[$r>p$ case]
\label{prop-r>p}
Suppose $g\ge 2$. Let $E$ be a vector bundle over $X$ of rank $r>p$ such that $F_{\rel}^*E$ satisfies \eqref{eq-condition}, then $E$ is not semistable.
\end{prop}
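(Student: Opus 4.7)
My plan is to argue by contradiction, pitting the strictness of $\nabla_{\can}$ with respect to the Harder-Narasimhan filtration against the vanishing of its $p$-curvature. Suppose $E$ is semistable. Then by the same reasoning used in the proof of Prop.~\ref{prop-langer-ineq}, none of the HN subbundles $E_i \subset F_{\rel}^*E$ for $1 \le i \le r-1$ can be the Frobenius pullback of a subbundle of $E$: if $E_i = F_{\rel}^* E'$, then $p\mu(E') = \mu(E_i) > \mu(F_{\rel}^*E) = p\mu(E)$, contradicting semistability. Hence Lem.~\ref{lem-condition-equiv} applies and gives that $\nabla_{\can}$ carries $E_i$ into $E_{i+1} \otimes \Omega_Y$, inducing $\mathcal{O}_Y$-linear isomorphisms $\phi_i : L_i \xrightarrow{\sim} L_{i+1} \otimes \Omega_Y$ for $i = 1, \ldots, r-1$.

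Next, I would iterate the connection. Working locally on an open $U \subset Y$ with a coordinate $t$, and letting $\partial_t$ be the dual derivation, I claim by induction on $k$ that $\nabla_{\partial_t}^k(E_1|_U) \subset E_{k+1}|_U$ and that the induced map $E_1|_U \to L_{k+1}|_U$, obtained by reducing modulo $E_k$, factors through $L_1|_U$ and coincides with the composition of the local isomorphisms $L_i \to L_{i+1}$ gotten by contracting $\phi_i$ with $\partial_t$. Although $\nabla_{\partial_t}$ is only $k$-linear and satisfies Leibniz, $\nabla_{\partial_t}(fs) = f\nabla_{\partial_t}(s) + \partial_t(f)\,s$, the offending term $\partial_t(f)\,s$ lies in the previous filtration stage and is absorbed upon passing to the associated graded. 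Consequently, for each $1 \le k \le r-1$, the map $\nabla_{\partial_t}^k\big|_{E_1}$ is a local isomorphism onto its image in $L_{k+1}$ modulo $E_k$.

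The contradiction now comes from Cartier's theorem. Since $\nabla_{\can}$ has vanishing $p$-curvature, $(\nabla_{\partial_t})^p = \nabla_{\partial_t^p}$; but in characteristic $p$ one has $\partial_t^p = 0$ as a derivation of $\mathcal{O}_Y$ (on monomials, $\partial_t^p(t^n) = n(n-1)\cdots(n-p+1)\,t^{n-p}$, whose coefficient is a product of $p$ consecutive integers, hence divisible by $p$). Thus $\nabla_{\partial_t}^p = 0$. Because $r > p$, the value $k = p$ lies in the admissible range $1 \le k \le r-1$, and the previous paragraph produces a local section of $E_1$ whose image under $\nabla_{\partial_t}^p$ is nonzero modulo $E_p$. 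This contradicts $\nabla_{\partial_t}^p = 0$, completing the proof.

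\textbf{Main obstacle.} The only delicate point is the inductive verification that iterating the non-$\mathcal{O}_Y$-linear operator $\nabla_{\partial_t}$ nevertheless yields, on associated gradeds, the $\mathcal{O}_Y$-linear composition of the symbols $\phi_1, \ldots, \phi_k$. Carefully tracking which Leibniz terms land in the lower filtration stages is the main bookkeeping, and once this is set up, the contradiction from vanishing $p$-curvature is immediate.
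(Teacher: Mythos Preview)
Your proof is correct and follows essentially the same approach as the paper's: both argue by contradiction, reduce to the case where no $E_i$ is a Frobenius pullback, invoke Lem.~\ref{lem-condition-equiv} to see that $\nabla_{\can}$ strictly shifts the filtration, and then iterate $\nabla_\theta$ (for a local derivation $\theta$ with $\theta^p=0$) $p$ times on $E_1$ to produce a section that is nonzero modulo $E_p$, contradicting the vanishing $p$-curvature. The paper carries this out with an explicit local basis $e_1,\dots,e_r$ and the matrix of $1$-forms $(\omega_{ij})$, computing the $e_{p+1}$-coefficient of $(\nabla_\theta)^p(e_1)$ as the product $\omega_{12}(\theta)\cdots\omega_{p(p+1)}(\theta)$, whereas you phrase the same computation in terms of the induced maps $\phi_i$ on the associated graded; the content is identical.
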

\begin{proof}
Again let $0=E_0\subset E_1\subset\cdots \subset E_r=F_{\rel}^*E$ be the Harder-Narasimhan filtration of $F_{\rel}^*E$, with rank-one quotients $L_1,\cdots, L_r$. Since $\mu(E_i)>\mu(F_{\rel}^*E)$ for each $i=1,\cdots,r-1$, we see that if at least one such $E_i$ is a pullback along $F_{\rel}$, then $E$ is not semistable.

Hence, we may suppose to the contrary that none of the $E_i$'s ($1\le i\le r-1$) is a pullback along $F_{\rel}$. We pick a sufficiently small open affine $\Spec(A)\subset X$, whose preimage under $F_{\rel}$ is $\Spec(B)$, where $B\cong A[x]/(x^p-a)$ for some $a\in A$, and the inclusion $A\rightarrow A[x]/(x^p-a)$ corresponds to $F_{\rel}$. We use $M_i$ to denote the free $B$-module corresponding to the vector bundle $E_i$, and $M$ for $E$. By shrinking $\Spec(A)$ if necessary, we may assume $\Omega_{B/k}\cong Bdx$ is free as well. Let $e_1,\cdots, e_r\in M\otimes_AB$ be such that for each $i$, the elements $e_1,\cdots, e_i$ form a $B$-basis for $M_i$. Write $\nabla$ for $\nabla_{\can}$, and by Lem.~\ref{lem-condition-equiv}, we may suppose
$$
\nabla(e_i)=\sum_{j=1}^{i+1}\omega_{ij}e_j
$$
where $\omega_{ij}=b_{ij}dx\in\Omega_{B/k}^1$ form a matrix of 1-forms, and $\omega_{i(i+1)}\neq 0$ for all $i$. On the other hand, let $\theta=\partial/\partial x$, so that
$$
\nabla_{(\theta^p)}(e_i)=\sum_{j=1}^{i+1}\omega_{ij}(\theta^p)e_j=\sum_{j=1}^{i+1}b_{ij}\theta^p(x)e_j=0
$$

We now evaluate the $p$-curvature of $\nabla$ on $\theta$ and $e_1$:
$$
(\nabla_{\theta})^p(e_1)-\nabla_{(\theta^p)}(e_1)=(\nabla_{\theta})^p(e_1)
$$
Note that the coefficient of $(\nabla_{\theta})^p(e_1)$ in front of $e_{p+1}$ is given by the expression $\omega_{12}(\theta)\omega_{23}(\theta)\cdots\omega_{p(p+1)}(\theta)$. Since $\omega_{i(i+1)}\neq 0$ for all $i$ and the $p$-curvature vanishes, we must have $e_{p+1}=0$. This is absurd given $r>p$.
\end{proof}

\noindent
The critical value $r=p$ is a more intricate case. However, using results of K.~Joshi, S.~Ramanan, E.~Xia, and J.-K.~Yu \cite[\S5.3]{Jos}, we can classify all semistable vector bundles of rank $r=p$ that are maximally destabilized by the Frobenius morphism.

\begin{lem}
\label{lem-pushforward-line-bundle}
Let $L$ be a line bundle over $Y$, then the pushforward $(F_{\rel})_*L$ is stable if $g\ge 2$, and semistable if $g=1$.
\end{lem}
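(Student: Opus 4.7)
Set $E := (F_{\rel})_*L$, a rank-$p$ bundle on $X$. The plan is to exploit the standard filtration on $F_{\rel}^*E$ obtained by iterating the canonical connection $\nabla_{\can}$. The adjunction counit $F_{\rel}^*E\twoheadrightarrow L$ has a rank-$(p-1)$ kernel $K$; composing $\nabla_{\can}|_K$ with the projection $F_{\rel}^*E\otimes\Omega_Y\to L\otimes\Omega_Y$ yields a nonzero $\OO_Y$-linear map (by Lem.~\ref{lem-pullback-connection}(ii), since $K$ is not the Frobenius-pullback of any subsheaf of $E$), and one verifies locally that it is surjective. Iterating this procedure produces a descending filtration
\[
F_{\rel}^*E = F^0\supsetneq F^1\supsetneq\cdots\supsetneq F^{p-1}\supsetneq F^p = 0,\qquad F^i/F^{i+1}\cong L\otimes\Omega_Y^{\otimes i}.
\]
Summing degrees gives $\deg(E) = \deg L + (p-1)(g-1)$, and hence $\mu(E) = \deg(L)/p + (p-1)(g-1)/p$.

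Suppose $g\ge 2$. To prove stability it suffices to show that every semistable subsheaf $E'\subset E$ of rank $r'\in\{1,\dots,p-1\}$ satisfies $\mu(E')<\mu(E)$, since passing to the maximal destabilizing subsheaf of an arbitrary candidate destabilizer only raises the slope (and keeps rank bounded by $r'$). By Frobenius adjunction, the inclusion $E'\hookrightarrow E = (F_{\rel})_*L$ corresponds to a nonzero morphism $F_{\rel}^*E'\to L$; its image is a nonzero subsheaf of the line bundle $L$, so $\mu_{\min}(F_{\rel}^*E')\le\deg L$, and therefore $L_{\min}(E')\le\deg(L)/p$ by Cor.~\ref{cor-lmax-lmin-charp}. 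The second inequality of Prop.~\ref{prop-langer-ineq} applied to the semistable bundle $E'$ then yields
\[
\mu(E') \le L_{\min}(E')+\frac{(r'-1)(g-1)}{p} \le \frac{\deg L}{p}+\frac{(r'-1)(g-1)}{p} < \frac{\deg L}{p}+\frac{(p-1)(g-1)}{p} = \mu(E),
\]
the strict inequality using $r'<p$ and $g\ge 2$.

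When $g=1$, the sheaf $\Omega_Y$ is trivial, so every subquotient $F^i/F^{i+1}$ in the filtration above is isomorphic to $L$; an iterated extension of semistable bundles of equal slope is semistable, so $F_{\rel}^*E$ is semistable. Since $F_{\rel}$ is flat, any destabilizing subsheaf of $E$ would pull back to a destabilizing subsheaf of $F_{\rel}^*E$, forcing $E$ itself to be semistable. The one piece of machinery I am treating as a black box is the existence and precise structure of the canonical filtration with quotients $L\otimes\Omega_Y^{\otimes i}$; this is the step where I expect the most bookkeeping (it can be verified in local coordinates $B\cong A[x]/(x^p-a)$, where $(F_{\rel})_*L$ has $A$-basis $\ell,x\ell,\dots,x^{p-1}\ell$), but the content is classical and appears in \cite[\S5.3]{Jos}. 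Everything else is essentially numerical bookkeeping combining this filtration with Prop.~\ref{prop-langer-ineq}.
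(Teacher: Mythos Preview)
Your proof is correct and follows essentially the same route as the paper: both use adjunction to obtain a nonzero map $F_{\rel}^*E'\to L$, bound $\mu_{\min}(F_{\rel}^*E')$ by $\deg L$, and then invoke Prop.~\ref{prop-langer-ineq} on the (semi)stable subbundle $E'$. The paper is slightly more economical in that it reads off $\mu((F_{\rel})_*L)=\deg(L)/p+(p-1)(g-1)/p$ directly from Riemann--Roch rather than via the canonical filtration, and handles $g=1$ uniformly by the same inequality chain (which simply becomes non-strict), so your filtration discussion---while correct---is not actually needed for the argument.
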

\noindent
We point out that this lemma is neither new, nor the most general to date. It is proved by Lange and Pauly \cite[Prop.~1.2]{Lange} using the classical inequality \eqref{eq-classical-ineq} and relative duality, and later generalized by Sun \cite[Thm.~2.2]{Sun} to the case where $L$ is replaced by any stable vector bundle. We provide a simple proof using the improved inequality \eqref{eq-langer-ineq}.
\begin{proof}
Let $E$ be a rank-$r$ stable subbundle of $(F_{\rel})_*L$. Then the inclusion $E\subset (F_{\rel})_*L$ gives, by adjunction, a nonzero morphism $F_{\rel}^*E\rightarrow L$. Hence $\mu_{\min}(F_{\rel}^*E)\le\deg(L)$. The Riemann-Roch formula implies that $\deg(L)=\deg((F_{\rel})_*L)-(p-1)(g-1)$. Apply Prop.~\ref{prop-langer-ineq} to the first inequality in the following chain:
\begin{align*}
\mu(E)-\frac{(r-1)(g-1)}{p}\le& L_{\min}(E)\le\frac{\mu_{\min}(F_{\rel}^*E)}{p}\le\frac{\deg(L)}{p}\\
=&\mu((F_{\rel})_*L)-\frac{(p-1)(g-1)}{p}
\end{align*}
Therefore $\mu(E)\le\mu((F_{\rel})_*L)-(p-r)(g-1)/p$. If $g=1$, then $\mu(E)\le\mu((F_{\rel})_*L)$, and $(F_{\rel})_*L$ is semistable. If $g\ge 2$, then $\mu(E)<\mu((F_{\rel})_*L)$ for the proper subbundle $E$, and $(F_{\rel})_*L$ is stable.
\end{proof}

\begin{prop}[$r=p$ case]
\label{prop-r=p}
Suppose $g\ge 2$. Let $E$ be a vector bundle over $X$ of rank $r=p$. Then the following are equivalent:
\begin{enumerate}[(i)]
	\item $E$ is stable, and $F_{\rel}^*E$ satisfies \eqref{eq-condition};
	\item $E$ is semistable, and $F_{\rel}^*E$ satisfies \eqref{eq-condition};
	\item $E=(F_{\rel})_*L$ for some line bundle $L$.
\end{enumerate}
\end{prop}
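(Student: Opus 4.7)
The plan is to prove the cycle (i) $\Rightarrow$ (ii) $\Rightarrow$ (iii) $\Rightarrow$ (i); only (i) $\Rightarrow$ (ii) is immediate.

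For (iii) $\Rightarrow$ (i), stability of $E = (F_{\rel})_*L$ is exactly Lemma \ref{lem-pushforward-line-bundle}. To verify \eqref{eq-condition}, I would invoke the classical canonical filtration of $F_{\rel}^*(F_{\rel})_*L$: this is a length-$p$ filtration by subbundles whose successive quotients are the line bundles $L\otimes\Omega_Y^{\otimes i}$ for $i=0,1,\dots,p-1$. Since $\deg\Omega_Y = 2g-2 > 0$, ordering these so that slopes decrease from top to bottom gives a filtration by line bundles of strictly decreasing slopes whose consecutive differences all equal $2g-2$; by uniqueness of the Harder-Narasimhan filtration (a filtration with semistable quotients of strictly decreasing slopes is the HN filtration), this is the HN filtration of $F_{\rel}^*E$, and \eqref{eq-condition} holds.

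For (ii) $\Rightarrow$ (iii), the key move is to extract the candidate $L$ directly from the HN data of $F_{\rel}^*E$: set $L := L_p$, the smallest-slope line bundle quotient guaranteed by \eqref{eq-condition}. The surjection $F_{\rel}^*E \twoheadrightarrow L$ corresponds via Frobenius adjunction to a nonzero morphism $\varphi\colon E\to (F_{\rel})_*L$. A short Riemann-Roch computation yields $\deg((F_{\rel})_*L)=\deg(L)+(p-1)(g-1)$, while summing $\deg(L_i)=\deg(L)+(p-i)(2g-2)$ over $i=1,\dots,p$ and dividing by $p$ gives the same value for $\deg(E)$; hence $E$ and $(F_{\rel})_*L$ have common rank $p$, common degree, and common slope. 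Since $(F_{\rel})_*L$ is stable (Lemma \ref{lem-pushforward-line-bundle}) and $E$ is semistable of the same slope, a standard slope comparison — any proper saturated subbundle of $(F_{\rel})_*L$ has strictly smaller slope, whereas $\mu(\varphi(E))\ge\mu(E)$ by semistability of $E$ — forces the saturation of $\varphi(E)$ to equal $(F_{\rel})_*L$, so $\varphi(E)$ already has full rank $p$, $\varphi$ is injective, and the equality of degrees then promotes $\varphi$ to an isomorphism.

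The main obstacle is the direction (ii) $\Rightarrow$ (iii), and more specifically correctly identifying $L$ as $L_p$ and chasing slopes to upgrade the adjoint $\varphi$ from nonzero to an isomorphism. The direction (iii) $\Rightarrow$ (i) depends on the canonical filtration of $F_{\rel}^*(F_{\rel})_*L$; although classical, the precise ordering and identification of the graded pieces with twists of $\Omega_Y$ requires a local computation (this is where the reference to \cite{Jos} most naturally enters). Once both ingredients are in place, the whole proposition reduces to a clean interplay between Frobenius adjunction and the stability property of Lemma \ref{lem-pushforward-line-bundle}.
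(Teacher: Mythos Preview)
Your proposal is correct and follows essentially the same route as the paper: the same cycle of implications, the same identification of $L$ as the last HN quotient together with adjunction and the Riemann--Roch/slope-matching argument for (ii)$\Rightarrow$(iii), and the same appeal to Lemma~\ref{lem-pushforward-line-bundle} plus the canonical (pre-oper) filtration of $F_{\rel}^*(F_{\rel})_*L$ from \cite{Jos} for (iii)$\Rightarrow$(i). The only cosmetic difference is that the paper concludes the isomorphism in (ii)$\Rightarrow$(iii) by arguing surjectivity first, whereas you argue injectivity first; both are equally valid.
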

\begin{proof}
(i) trivially implies (ii). To show that (ii) implies (iii), let $L$ denote the last rank-one quotient in the Harder-Narasimhan filtration of $F_{\rel}^*E$. Then we have a nonzero morphism $F_{\rel}^*E\rightarrow L$, which by adjunction, gives rise to a nonzero morphism $E\rightarrow (F_{\rel})_*L$. Under the assumption $r=p$, the computation \eqref{eq-deg-lr} shows that
$$
\deg(E)=\deg(L)+(p-1)(g-1)
$$
On the other hand, we obtain from the Riemann-Roch theorem that $\deg(L)=\deg((F_{\rel})_*L)-(p-1)(g-1)$. Hence
$$
\deg((F_{\rel})_*L)=\deg(L)+(p-1)(g-1)=\deg(E)
$$
and consequently $\mu(E)=\mu((F_{\rel})_*L)$. Note that $(F_{\rel})_*L$ is stable by Lem.~\ref{lem-pushforward-line-bundle}. Because $E$ is semistable and $\mu(E)=\mu((F_{\rel})_*L)$, the morphism $E\rightarrow (F_{\rel})_*L$ is necessarily surjective. On the other hand, $\rank(E)=r=p=\rank((F_{\rel})_*L)$, so this morphism is an isomorphism.

It remains to show that (iii) implies (i). Indeed, $E$ is stable by Lem.~\ref{lem-pushforward-line-bundle}. On the other hand, K.~Joshi \emph{et al.~}(cf.~\cite[\S5]{Jos}, although a detailed proof of this result is given in \cite[Lem.~2.1]{Sun}) proved that the subbundles $0=E_0\subset E_1\subset\cdots\subset E_l=F_{\rel}^*E$ defined by
\begin{enumerate}[(i)]
	\item $E_l=F^*E$ and $E_{l-1}=\ker(E_l\rightarrow L)$, where the morphism $E_l\rightarrow L$ is the counit of the adjunction, as $E_l=F_{\rel}^*(F_{\rel})_*L$.
	\item $E_i=\ker(E_{i+1}\xrightarrow{\nabla_{\can}}F_{\rel}^*E\otimes\Omega_Y\rightarrow(F_{\rel}^*E/E_{i+1})\otimes\Omega_Y)$ for $0\le i\le p-2$.
\end{enumerate}
is a pre-oper, i.e. $\nabla_{\can}(E_i)\subset E_{i+1}\otimes\Omega_Y$ for $0\le i\le l-1$, and the induced $\mathcal{O}_Y$-linear morphism $E_i/E_{i-1}\rightarrow(E_{i+1}/E_i)\otimes\Omega_Y$ is an isomorphism for $1\le i\le l-1$. In particular, the quotient $E_l/E_{l-1}$ is line bundle $L$, and inductively, each quotient $E_i/E_{i-1}\cong(E_{i+1}/E_i)\otimes\Omega_Y$ is a line bundle $L_i$, for $i=1,\cdots,l-1$. So $l=r$, the subbundles $0=E_0\subset E_1\subset\cdots\subset E_r=F_{\rel}^*E$ is the Harder-Narasimhan filtration of $F^*E$ with line bundle quotients $L_1,\cdots,L_{r-1},L_r=L$, and $\deg(L_i)=\deg(L_{i+1})+2g-2$.
\end{proof}

 Having concluded the discussion on vector bundles whose pullback satisfies \eqref{eq-condition}, we make the following definition. As before, $X$ denotes a smooth projective curve over an algebraically closed field $k$ of characteristic $p\neq 0$, with genus $g\ge 1$.

\begin{defn}
\label{defn-max-destab}
A vector bundle $E$ over $X$ is \emph{maximally Frobenius-destabilized} if $E$ is stable, the Harder-Narasimhan filtration of $F_{\rel}^*E$ consists entirely of rank-one quotients $L_1,\cdots,L_r$, and $\mu(L_i)=\mu(L_{i+1})+2g-2$ for all $i=1,\cdots,r$ (cf.~condition \eqref{eq-condition}).
\end{defn}

\noindent
When $g=1$, there is no maximally Frobenius-destabilized vector bundles for trivial reasons. Prop.~\ref{prop-r>p} shows that maximally Frobenius-destabilized bundles necessarily have rank $r\le p$. Prop.~\ref{prop-r=p} guarantees the existence of rank-$p$ maximally Frobenius-destabilized bundles over an arbitrary curve of genus $g\ge 2$. In general, we cannot expect such bundles with rank $r<p$ to exist over an \emph{arbitrary} curve. However, for most $g$, there exists a curve $X$ of genus $g$ and a maximally Frobenius-destabilized bundle $E$ over $X$ of arbitrary rank $r$ with $2\le r<p$. We construct such bundles in the following way:

\begin{lem}[S.~Mochizuki, B.~Osserman]
\label{lem-r2g2}
Suppose $p>2$. Then there exists a rank-$2$ maximally Frobenius-destabilized vector bundle $E$ over a general curve $X$ of genus $2$.
\end{lem}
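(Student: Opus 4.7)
\textit{Proof plan.} By \cite[Prop.~3.3]{Jos-Xia}, every rank-2 Frobenius-destabilized semistable bundle on a genus-2 curve automatically satisfies condition \eqref{eq-condition}; moreover Prop.~\ref{prop-r<p} (applicable since $r=2<p$ and $p\nmid g-1=1$) upgrades any such bundle satisfying \eqref{eq-condition} to a stable one. The task therefore reduces to producing a single rank-2 vector bundle $E$ on a general genus-2 curve $X$ whose Frobenius pullback is not semistable.

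The strategy I would adopt is to construct $E$ through Cartier descent. By the equivalence used in Lem.~\ref{lem-pullback-connection}, specifying a bundle $E$ on $X$ is the same as specifying a pair $(V,\nabla)$ on $Y=X^{(-1)}$ consisting of a vector bundle $V$ and an integrable connection $\nabla$ with vanishing $p$-curvature. Accordingly I would build on $Y$ a rank-2 extension
\begin{equation*}
0\longrightarrow N\longrightarrow V\longrightarrow L\longrightarrow 0, \qquad N\cong L\otimes\Omega_Y,
\end{equation*}
equipped with an integrable connection $\nabla$ of vanishing $p$-curvature whose second fundamental form $N\to L\otimes\Omega_Y$ is \emph{nonzero}, i.e.\ $N$ is not $\nabla$-invariant. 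This is precisely the data of a dormant rank-2 oper on $Y$. Once such $(V,\nabla)$ is in hand, Cartier descent yields the desired $E$ with $F_{\rel}^*E\cong V$ and $\nabla_{\can}=\nabla$; the non-invariance of $N$ ensures via Lem.~\ref{lem-pullback-connection}(i) that $N$ is not the Frobenius pullback of any line subbundle of $E$, and then Prop.~\ref{prop-r<p} forces $E$ to be stable. The HN filtration $0\subset N\subset F_{\rel}^*E$ exhibits condition \eqref{eq-condition} by the choice $N\cong L\otimes\Omega_Y$.

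The genuine content of the lemma is therefore the existence of a dormant rank-2 oper on $Y$ for a general genus-2 curve in characteristic $p>2$. This is Mochizuki's main numerical result \cite{Moc}: through $p$-adic Teichm\"uller theory one shows that the finite-type moduli of dormant rank-2 opers (equivalently, nilpotent indigenous bundles) is \'etale over the moduli of curves with constant degree, and this degree is computed to be a positive integer for every $p>2$ already in the $g=2$ case. Alternatively, one can appeal to Osserman's algebraic construction \cite{Oss}, which produces such opers over an \emph{arbitrary} genus-2 curve in small characteristics. This existence step is the main obstacle; the conversion from a dormant oper on $Y$ to a maximally Frobenius-destabilized bundle on $X$, described above, is essentially formal given Cartier's theorem and Prop.~\ref{prop-r<p}.
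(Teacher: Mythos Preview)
Your proposal is correct and rests on the same Mochizuki/Osserman input as the paper, but the packaging differs slightly. The paper simply quotes the counting formula: over a general genus-$2$ curve there are $2(p^3-p)/3>0$ rank-$2$ semistable bundles with trivial determinant that are Frobenius-destabilized, and then gives a three-line argument (the one in \cite[Prop.~3.3]{Jos-Xia}) that any such bundle satisfies \eqref{eq-condition}, using only Lem.~\ref{lem-pullback-connection} and $\deg\Omega_Y=2$. You instead unwind the oper side of this correspondence explicitly, building $(V,\nabla)$ on $Y$ and descending via Cartier; this is the same object viewed from the other direction, since Mochizuki's count of dormant indigenous bundles \emph{is} the count of Frobenius-destabilized $E$. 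Your route makes the mechanism more transparent and connects cleanly to the pre-oper discussion elsewhere in the paper, at the cost of some length; the paper's route is terser but treats the existence step as a black-box citation. Note also that once \eqref{eq-condition} is verified, your separate invocation of the non-invariance of $N$ is unnecessary: Prop.~\ref{prop-r<p} already handles this internally via the integrality argument \eqref{eq-deg-g}.
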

\begin{proof}
Osserman's main theorem (cf.~\cite[Thm.~1.2]{Oss}, although the part we are using here is originally due to Mochizuki \cite{Moc}) states that, over a general curve of genus 2, the number of rank-$2$ semistable vector bundles with trivial determinant and whose pullback under $F$ is not semistable equals $2(p^3-p)/3>0$.

We argue that every such vector bundle $E$ is maximally Frobenius-destabilized (this is \cite[Prop.~3.3]{Jos-Xia}). Indeed, since $F_{\rel}^*E$ is not semistable and of trivial determinant, there exists a line bundle $L$ of positive degree admitting an exact sequence
$$
0\rightarrow L\rightarrow F_{\rel}^*E\rightarrow L^{-1}\rightarrow 0
$$
Furthermore, Lem.~\ref{lem-pullback-connection} gives a nonzero morphism $L\rightarrow L^{-1}\otimes\Omega_Y$, so $2\deg(L)\le\deg(\Omega_X)=2$. Thus we must have $\deg(L)=1$, and $\deg(L)=\deg(L^{-1})+2$ as desired.
\end{proof}

\begin{lem}
\label{lem-sym-power}
If a rank-$2$ vector bundle $E$ over a curve of genus $g\ge 2$ satisfies condition \eqref{eq-condition}, then so do its symmetric and exterior powers $S^t(E)$ and $\Lambda^t(E)$ for all $t\in\mathbb{N}$.
\end{lem}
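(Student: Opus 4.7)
The plan is to compute the Harder-Narasimhan filtration of $F_{\rel}^* S^t(E) = S^t(F_{\rel}^* E)$ and $F_{\rel}^* \Lambda^t(E) = \Lambda^t(F_{\rel}^* E)$ directly, using that both symmetric and exterior powers commute with pullback along $F_{\rel}$.

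The exterior powers are dispatched immediately, since $E$ has rank $2$: $\Lambda^0(E) = \mathcal{O}_X$ and $\Lambda^2(E) = \det(E)$ are line bundles, so \eqref{eq-condition} holds vacuously; $\Lambda^1(E) = E$ satisfies \eqref{eq-condition} by hypothesis; and $\Lambda^t(E) = 0$ for $t \geq 3$.

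For the symmetric powers, I would write the Harder-Narasimhan filtration of $F_{\rel}^* E$ as $0 \subset L_1 \subset F_{\rel}^* E$ with quotient $L_2$, so that $\deg(L_1) = \deg(L_2) + 2g - 2$ by \eqref{eq-condition}. The short exact sequence $0 \to L_1 \to F_{\rel}^* E \to L_2 \to 0$ then induces the standard multiplicative filtration
$$S^t(F_{\rel}^* E) = F_0 \supset F_1 \supset \cdots \supset F_t \supset F_{t+1} = 0,$$
where $F_i$ is the image of the natural map $L_1^{\otimes i} \otimes S^{t-i}(F_{\rel}^* E) \to S^t(F_{\rel}^* E)$, with line bundle successive quotients $F_i / F_{i+1} \cong L_1^{\otimes i} \otimes L_2^{\otimes (t-i)}$.

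The key verification is that this natural filtration is already the Harder-Narasimhan filtration of $S^t(F_{\rel}^* E)$. Each successive quotient is a line bundle, hence semistable, so it suffices to check that the slopes are monotone in the correct direction. The slope of $L_1^{\otimes i} \otimes L_2^{\otimes (t-i)}$ equals $t \deg(L_2) + i(2g - 2)$, strictly increasing in $i$; equivalently, reading the filtration from its smallest piece $F_t = L_1^{\otimes t}$ outward, the successive quotient slopes decrease by exactly $2g - 2$ at each step. This produces rank-one HN quotients with the prescribed slope differences, which is exactly \eqref{eq-condition} for $S^t(E)$. I do not expect any real obstacle beyond correctly identifying the natural multiplicative filtration; once that is in place, the conclusion is an immediate arithmetic check on slopes.
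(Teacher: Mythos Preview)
Your proof is correct and matches the paper's approach: build the multiplicative filtration on the symmetric power from the short exact sequence $0 \to L_1 \to (\cdot) \to L_2 \to 0$, observe the successive quotients are $L_1^{\otimes i}\otimes L_2^{\otimes (t-i)}$, and check the slope gaps are exactly $2g-2$. The only wrinkle is that the paper reads ``$E$ satisfies \eqref{eq-condition}'' as a condition on $E$ itself rather than on $F_{\rel}^*E$, but since $S^t$ commutes with $F_{\rel}^*$ the two readings yield the same computation; your explicit dispatch of the exterior powers (trivial for rank $2$) is also cleaner than the paper's bare assertion that the exterior case is ``completely analogous.''
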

\noindent
We prove the lemma for $S^t(E)$. The proof for the exterior power case is completely analogous.
\begin{proof}
Indeed, a rank-$2$ vector bundle $E$ satisfies \eqref{eq-condition} if and only if there exist line bundles $L_1$, $L_2$, and an exact sequence
$$
0\rightarrow L_1\rightarrow E\rightarrow L_2\rightarrow 0
$$
such that $\deg(L_1)=\deg(L_2)+2g-2$. Now $S^t(E)$ has a filtration $0=E_0\subset E_1\subset\cdots\subset E_{t+1}=S^t(E)$ with rank-one quotients
$$
E_{i+1}/E_i\cong S^{t-i}(L_1)\otimes S^{i}(L_2)\cong L_1^{t-i}\otimes L_2^i
$$
so $\deg(E_{i+1}/E_i)=t\deg(L_1)-i(2g-2)$. Thus this filtration is the Harder-Narasimhan filtration of $S^t(E)$, and its successive quotients satisfy $\deg(E_i/E_{i-1})=\deg(E_{i+1}/E_i)+2g-2$.
\end{proof}

\begin{prop}
\label{prop-any-g-any-r}
Suppose natural number $p$, $g$, and $r$ are all at least $2$, and either
\begin{enumerate}[(i)]
	\item $r=p$, or
	\item $r<p$ and $p$ does not divide $g-1$,
\end{enumerate}
Then there exists a rank-$r$ maximally Frobenius-destabilized vector bundle $E$ over some genus-$g$ curve $X$.
\end{prop}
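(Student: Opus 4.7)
The plan splits by case, with the substance in case (ii).

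Case (i), $r = p$, is immediate from Proposition \ref{prop-r=p}: over any curve $X$ of genus $g \geq 2$ and any line bundle $L$ on $Y = X^{(-1)}$, the Frobenius pushforward $(F_{\rel})_* L$ is a rank-$p$ maximally Frobenius-destabilized bundle, so nothing further needs to be checked.

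For case (ii), the strategy is to start from the rank-$2$, genus-$2$ examples given by Lemma \ref{lem-r2g2}, adjust the genus by an \'etale base change, and adjust the rank by a symmetric power. Since $r \geq 2$ and $r < p$ force $p \geq 3$, Lemma \ref{lem-r2g2} supplies a genus-$2$ curve $X_0$ carrying a rank-$2$ maximally Frobenius-destabilized bundle $E_0$. I would next construct an \'etale cyclic cover $f : X \to X_0$ of degree $d = g - 1$. Such a cover exists because $p \nmid g - 1$: the prime-to-$p$ \'etale fundamental group of $X_0$ surjects onto $\mathbb{Z}/d\mathbb{Z}$ (concretely, multiplication by $d$ on the Jacobian $J(X_0)$ is a separable isogeny, and its cyclic quotients pull back via the Abel--Jacobi map to \'etale cyclic covers of $X_0$). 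By Riemann--Hurwitz, the genus of $X$ is $1 + d(g_0 - 1) = g$.

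To verify condition \eqref{eq-condition} for $f^* E_0$ on $X$, I would use that the Frobenius twist $f^{(-1)} : Y \to Y_0$ is separable and fits into the base-change square with $F_{\rel, X}$ and $F_{\rel, X_0}$, giving $F_{\rel, X}^* f^* E_0 \cong (f^{(-1)})^* F_{\rel, X_0}^* E_0$. By Lemma \ref{lem-sep-hn}, the Harder--Narasimhan filtration on the right is the pullback of that on $F_{\rel, X_0}^* E_0$; its rank-one quotients are $(f^{(-1)})^* L_1$ and $(f^{(-1)})^* L_2$, with degree difference $d \cdot (2 g_0 - 2) = 2(g - 1) = 2 g_X - 2$, as required. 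I would then set $E := S^{r-1}(f^* E_0)$; by Lemma \ref{lem-sym-power}, $E$ is a rank-$r$ vector bundle on $X$ whose Frobenius pullback also satisfies \eqref{eq-condition}, so Proposition \ref{prop-r<p}---whose hypotheses $r < p$ and $p \nmid g - 1$ are exactly ours---forces $E$ to be stable, and hence maximally Frobenius-destabilized.

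The main technical obstacle is the existence of the \'etale cover of degree exactly $g - 1$; the hypothesis $p \nmid g - 1$ enters precisely to guarantee separability of the multiplication-by-$d$ isogeny on the Jacobian, without which wild ramification would spoil the Riemann--Hurwitz computation needed to land at the prescribed genus.
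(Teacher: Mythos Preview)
Your proposal is correct and follows essentially the same route as the paper: case (i) via Proposition~\ref{prop-r=p}, and case (ii) by pulling back the genus-$2$, rank-$2$ example of Lemma~\ref{lem-r2g2} along an \'etale cover of degree $g-1$ (whose existence uses $p\nmid g-1$), then taking the $(r-1)$st symmetric power and invoking Lemma~\ref{lem-sym-power} and Proposition~\ref{prop-r<p}. The only cosmetic difference is that you verify the Harder--Narasimhan filtration on the pullback via Lemma~\ref{lem-sep-hn} and a degree count, whereas the paper pulls back the short exact sequence directly using $f^*\Omega_Z\cong\Omega_X$; both arguments are equivalent here.
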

\begin{proof}
Existence in case (i) follows directly from \eqref{prop-r=p}, and holds over an \emph{arbitrary} curve of genus $g$.

For case (ii), we first consider a general curve $Z$ of genus $2$. By Lem.~\ref{lem-r2g2}, there exists a rank-2 vector bundle $G$ over $Z$ which is maximally Frobenius-destabilized.

Fix $n=g-1$. Then $n$ is not divisible by $p$, so finite \'{e}tale coverings of $Z$ of degree $n$ are parametrized by the $n$-torsion elements of the Picard group $\Pic(Y)[n]\cong(\ZZ/n\ZZ)^{2g}$. In particular, there exists such a covering $f:X\rightarrow Z$ of degree $n$. Riemann-Hurwitz formula shows that the genus $g_X$ of $X$ satisfies $g_X=n+1=g$. Furthermore, $f^*\Omega_Z=\Omega_X$, so the exact sequence (cf.~Rem.~\ref{rem-max-destab})
$$
0\rightarrow L\otimes\Omega_{Z^{(-1)}}\rightarrow F_{\rel}^*G\rightarrow L\rightarrow 0
$$
pulls back to an exact sequence under the flat morphism $f$:
\begin{equation}
\label{eq-f*G}
0\rightarrow f^*L\otimes\Omega_Y\rightarrow F_{\rel}^*f^*G\rightarrow f^*L\rightarrow 0
\end{equation}
In particular, $f^*G$ is a rank-2 vector bundle over $X$ whose pullback along $F_{\rel}$ satisfies condition \eqref{eq-condition}. Taking $E=S^{r-1}(f^*G)$ and applying Lem.~\ref{lem-sym-power} and Prop.~\ref{prop-r<p} completes the construction of the rank-$r$ vector bundle $E$.
\end{proof}

 We also note the following

\begin{cor}
\label{cor-sym-pushforward-lb}
Suppose $p>2$, $g\ge 2$ and $g-1$ is not divisible by $p$. Let $E$ be a rank-$2$ vector bundle over a genus-$g$ curve such that $F_{\rel}^*E$ satisfies \eqref{eq-condition}. Then $S^{p-1}(E)=(F_{\rel})_*L$ for some line bundle $L$.
\end{cor}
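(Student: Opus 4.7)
The plan is to reduce to Prop.~\ref{prop-r=p}. Since $S^{p-1}(E)$ has rank $p$ and its Frobenius pullback $S^{p-1}(F_{\rel}^*E)$ satisfies \eqref{eq-condition} by Lem.~\ref{lem-sym-power}, it will suffice to show $S^{p-1}(E)$ is semistable (in fact stable); then Prop.~\ref{prop-r=p} will supply a line bundle $L$ with $S^{p-1}(E)\cong(F_{\rel})_*L$. As observed in Rem.~\ref{rem-none-hn-stable}, the stability argument of Prop.~\ref{prop-r<p} uses only $g\ge 2$ and $r'<r$, not $r<p$, so stability of $S^{p-1}(E)$ will follow once I verify that none of the Harder-Narasimhan subbundles $E_1,\ldots,E_{p-1}$ of $S^{p-1}(F_{\rel}^*E)$ is a pullback from $S^{p-1}(E)$ along $F_{\rel}$; by Lem.~\ref{lem-pullback-connection}(i) this amounts to showing $\nabla_{\can}(E_k)\not\subset E_k\otimes\Omega_Y$ for each $1\le k\le p-1$.

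The hypotheses $r=2<p$ and $p\nmid(g-1)$ let me apply Prop.~\ref{prop-r<p} to conclude that $E$ is stable; in particular $L_1\subset F_{\rel}^*E$ is not an $F_{\rel}$-pullback, so Lem.~\ref{lem-condition-equiv} produces an isomorphism $L_1\xrightarrow{\sim}L_2\otimes\Omega_Y$ induced by $\nabla_{\can}$. On a small affine open in $Y$ with $\Omega_Y$ trivialized by $dx$, I would choose a local generator $e_1$ of $L_1$ and a local lift $e_2\in F_{\rel}^*E$ of a generator of $L_2$, rescaling $e_2$ so that
\begin{equation*}
\nabla_{\can}(e_1)=\alpha\,e_1\otimes dx+e_2\otimes dx,\qquad\nabla_{\can}(e_2)=\beta\,e_1\otimes dx+\gamma\,e_2\otimes dx
\end{equation*}
for some local functions $\alpha,\beta,\gamma$. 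Applying the Leibniz rule on $S^{p-1}(F_{\rel}^*E)$ yields, for $i+j=p-1$,
\begin{equation*}
\nabla_{\can}(e_1^ie_2^j)=(i\alpha+j\gamma)\,e_1^ie_2^j\otimes dx+i\,e_1^{i-1}e_2^{j+1}\otimes dx+j\beta\,e_1^{i+1}e_2^{j-1}\otimes dx.
\end{equation*}
Locally $E_k$ is the span of those monomials $e_1^ie_2^j$ with $j\le k-1$. Evaluating the formula on the ``top'' element $e_1^{p-k}e_2^{k-1}\in E_k$, the middle term is $(p-k)\,e_1^{p-k-1}e_2^k\otimes dx$, which has $j=k$ and so lies outside $E_k\otimes\Omega_Y$, while the first and third terms lie inside. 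Because $1\le k\le p-1$, the scalar $p-k$ is nonzero in characteristic $p$, so the required non-inclusion holds.

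The main obstacle will be setting up the local trivialization so that the coefficient of $e_2$ in $\nabla_{\can}(e_1)$ is a unit, and then carrying out the symmetric-power Leibniz computation cleanly; everything else is assembling previously established results. With stability of $S^{p-1}(E)$ in hand, Prop.~\ref{prop-r=p} ((ii)$\Rightarrow$(iii)) furnishes the desired line bundle $L$ with $S^{p-1}(E)\cong(F_{\rel})_*L$.
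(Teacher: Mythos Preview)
Your overall strategy coincides with the paper's: reduce to Prop.~\ref{prop-r=p} by showing $S^{p-1}(E)$ is stable, invoking Lem.~\ref{lem-sym-power} and Rem.~\ref{rem-none-hn-stable}, so that the task becomes proving that no $E_k$ ($1\le k\le p-1$) in the Harder--Narasimhan filtration of $F_{\rel}^*S^{p-1}(E)$ is a Frobenius pullback. Where you diverge is in how you verify this last point. The paper argues numerically: if $E_i=F_{\rel}^*E'$ then \eqref{eq-deg-g} (with $r=p$) gives $\deg(E')=i\deg(S^{p-1}(E))/p+i(p-i)(g-1)/p$; since $\deg(S^{p-1}(E))=\binom{p}{2}\deg(E)$ is divisible by $p$ while $p\nmid i(p-i)(g-1)$, this cannot be an integer. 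You instead invoke Lem.~\ref{lem-pullback-connection}(i) and compute the induced canonical connection on the symmetric power locally via the Leibniz rule, observing that the coefficient $p-k$ of the ``overflow'' monomial $e_1^{p-k-1}e_2^k$ is a unit in characteristic $p$.

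Both arguments are correct. The paper's is shorter and exactly mirrors the proof of Prop.~\ref{prop-r<p}, with $p\nmid(g-1)$ entering at the decisive divisibility step. Your approach is closer in spirit to the local calculation in Prop.~\ref{prop-r>p} and isolates the role of the hypothesis differently: you need $p\nmid(g-1)$ only to invoke Prop.~\ref{prop-r<p} for the stability of $E$ itself (so that Lem.~\ref{lem-condition-equiv} applies), after which the non-pullback conclusion is purely combinatorial and independent of $g$. As a byproduct, your computation shows that whenever $E$ is already known to be semistable (so that $L_1\subset F_{\rel}^*E$ cannot be a pullback), $S^{p-1}(E)$ is stable without any further appeal to $p\nmid(g-1)$.
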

\begin{proof}
$F_{\rel}^*S^{p-1}(E)$ also satisfies \eqref{eq-condition}, so we know from Prop.~\ref{prop-r=p} that it suffices to prove the stability of $S^{p-1}(E)$. Let $0=E_0\subset E_1\subset\cdots\subset E_p=F_{\rel}^*S^{p-1}(E)$ be the Harder-Narasimhan filtration of $F_{\rel}^*S^{p-1}(E)$. We only need to show that none of the $E_i$'s ($1\le i\le p-1$) comes from pulling back along $F_{\rel}$ (cf.~Rem.~\ref{rem-none-hn-stable}). If some $E_i=F_{\rel}^*E'$ for some subbundle $E'$ of $S^{p-1}(E)$, then the computation \eqref{eq-deg-g} shows that
\begin{equation}
\label{eq-deg-g-r}
\deg(E')=\frac{i\deg(S^{p-1}(E))}{p}+\frac{i(p-i)(g-1)}{p}
\end{equation}
On the other hand, it follows from a Chern class computation that
$$
\deg(S^{p-1}(E))=\frac{p(p-1)}{2}\deg(E)
$$
Therefore, under the hypothesis that $g-1$ is not divisible by $p$, the expression in \eqref{eq-deg-g-r} cannot be an integer for $1\le i\le p-1$.
\end{proof}

\noindent
and a partial improvement of the bound in Prop.~\ref{prop-langer-ineq} using a theorem of X.~Sun:

\begin{prop}
\label{prop-langer-ineq-r>p}
Suppose $g\ge 2$. Let $E$ be a semistable vector bundle over a genus-$g$ curve $X$. Then
$$
L_{\max}(E)-\mu(E)<g-1\quad\text{and}\quad\mu(E)-L_{\min}(E)<g-1
$$
\end{prop}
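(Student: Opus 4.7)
The plan is to partition the argument by the rank $r$ of $E$ relative to the characteristic $p$, proving only $L_{\max}(E)-\mu(E)<g-1$; the bound for $L_{\min}(E)$ will follow from a symmetric argument.

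When $r\le p$, the inequality is essentially a corollary of Prop.~\ref{prop-langer-ineq}: it gives $L_{\max}(E)-\mu(E)\le(r-1)(g-1)/p\le(p-1)(g-1)/p<g-1$, the last step using $g\ge 2$. So we may henceforth assume $r>p$.

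When $r=p+1$, Prop.~\ref{prop-langer-ineq} yields only the non-strict bound $L_{\max}(E)-\mu(E)\le(r-1)(g-1)/p=g-1$. I would then argue that the inequality must in fact be strict: equality would, by the equivalence (i)$\Leftrightarrow$(iii) in Prop.~\ref{prop-langer-ineq}, force $F_{\rel}^*E$ to satisfy condition \eqref{eq-condition}, but Prop.~\ref{prop-r>p} rules this out for any semistable bundle of rank $r>p$.

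For $r\ge p+2$, Prop.~\ref{prop-langer-ineq} becomes too weak (the bound exceeds $g-1$), and I would invoke X.~Sun's theorem \cite[Thm.~2.2]{Sun}, which asserts that the relative Frobenius pushforward of a semistable bundle is semistable. Via adjunction and Cartier's theorem (Lem.~\ref{lem-pullback-connection}), Sun's result imposes a rigidity on the HN filtration of $F_{\rel}^*E$ for $E$ semistable: its length is bounded by $p$, and the ranks of successive HN quotients are forced into a monotonicity that prevents rank from concentrating in the extremal-slope pieces. I would then pass to a large Frobenius iterate $(F_{\rel}^k)^*E$ via Langer's Thm.~\ref{thm-fdHN} to transport this structural constraint to the stabilized HN filtration, and apply the elementary optimization from the claim in the proof of Prop.~\ref{prop-langer-ineq}, this time run under $l\le p$ and the rank-monotonicity constraint rather than the trivial $l\le r$. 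A computation parallel to that claim (with equal ranks $r_i=r/l$ realizing the maximum of $\sum r_i(i-1)$) then yields $L_{\max}(E)-\mu(E)\le(p-1)(g-1)/p<g-1$, after dividing the level-$k$ bound by $p^k$.

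The main obstacle will be translating Sun's theorem, which is naturally a statement about one Frobenius pullback, into the structural constraint on the HN filtration of $(F_{\rel}^k)^*E$ after Langer stabilization; in particular, one must verify that both the length bound $l\le p$ and the rank-monotonicity survive iteration so that the level-$k$ elementary optimization is actually available. Once that transfer is in place, the final bound is routine.
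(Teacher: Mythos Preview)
Your case split by rank is unnecessary, and the $r\ge p+2$ branch has a genuine gap that is not easily repaired. The paper's argument is uniform in $r$ and uses Sun's theorem in a far more direct way than you propose.

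Here is the paper's idea for $\mu(E)-L_{\min}(E)<g-1$ (the $L_{\max}$ bound then follows by dualizing). Pick $k$ large enough that $L_{\min}(E)=\mu_{\min}((F_{\rel}^k)^*E)/p^k$, and let $Q$ be a \emph{stable} quotient of $(F_{\rel}^k)^*E$ with $\mu(Q)=\mu_{\min}((F_{\rel}^k)^*E)$ (e.g.\ the last Jordan--H\"older quotient of the last HN quotient). Adjunction turns the surjection $(F_{\rel}^k)^*E\twoheadrightarrow Q$ into a nonzero map $E\to (F_{\rel}^k)_*Q$. Sun's theorem says $(F_{\rel}^k)_*Q$ is stable; since $E$ is semistable, $\mu(E)\le\mu((F_{\rel}^k)_*Q)$. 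Riemann--Roch gives
\[
\mu((F_{\rel}^k)_*Q)=\frac{\mu(Q)}{p^k}+\Bigl(1-\tfrac{1}{p^k}\Bigr)(g-1)=L_{\min}(E)+\Bigl(1-\tfrac{1}{p^k}\Bigr)(g-1),
\]
and the strict inequality follows since $g-1>0$. No optimization, no structural analysis of the HN filtration, no case split.

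As for your $r\ge p+2$ route: the obstacle you flag is real and, I believe, fatal in the form stated. The length bound $l\le p$ on the HN filtration of $F_{\rel}^*E$ (for $E$ semistable) can be extracted from the canonical-filtration machinery, but it does \emph{not} survive iteration. At level $2$, each HN quotient $Q_i^{(1)}$ of $F_{\rel}^*E$ is semistable, so $F_{\rel}^*Q_i^{(1)}$ has HN length $\le p$; refining, the HN filtration of $(F_{\rel}^2)^*E$ can have length up to $p^2$, and in general $(F_{\rel}^k)^*E$ up to $p^k$. After Langer stabilization the length freezes at some $l_0$, but there is no reason $l_0\le p$. With $l$ unbounded the elementary optimization from the Claim in Prop.~\ref{prop-langer-ineq} gives you nothing better than what you already had. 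The rank-monotonicity claim has the same problem: whatever monotonicity holds at level $1$ is scrambled once the quotients themselves break up under further pullback. So the transfer step you identify as the ``main obstacle'' is not a technicality but the heart of the matter, and the paper sidesteps it entirely by never looking at the full HN filtration --- one stable quotient suffices.
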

\noindent
Let $r$ denote the rank of $E$. When $r>p$, these inequalities are sharper than Prop.~\ref{prop-langer-ineq}.
\begin{proof}
We first prove the second inequality. Using Thm.~\ref{thm-fdHN}, we may fix some sufficiently large $k$ such that $L_{\min}(E)=\mu_{\min}((F_{\rel}^k)^*E)/p^k$. Consider the last quotient $Q_l$ in the Harder-Narasimhan filtration of $(F_{\rel}^k)^*E$, and let $Q$ be the last quotient in the Jordan-H\"{o}lder filtration of $Q_l$. Then
$$
\mu(Q)=\mu(Q_l)=\mu_{\min}((F_{\rel}^k)^*E)
$$
and there is a surjection $(F_{\rel}^k)^*E\rightarrow Q$. By adjunction, we obtain a nonzero morphism $E\rightarrow(F_{\rel}^k)_*Q$. Sun's theorem \cite[Thm.~2.2]{Sun} shows $(F_{\rel}^k)_*Q$ is stable. Hence $\mu(E)\le\mu((F_{\rel}^k)_*Q)$. Let $q$ denote the rank of $Q$. Expressing $\mu((F_{\rel}^k)_*Q)$ in terms of $\mu(Q)$ by the Riemann-Roch formula, we compute
\begin{equation}
\label{eq-better-ineq}
\mu(E)\le\mu((F_{\rel}^k)_*Q)=\frac{\mu(Q)}{p^k}+\left(1-\frac{1}{p^k}\right)(g-1)=L_{\min}(E)+\left(1-\frac{1}{p^k}\right)(g-1)
\end{equation}
and the desired inequality follows.

For the first inequality, consider the maximal destabilizing subsheaf $E_1$ of $(F_{\rel}^k)^*E$, and let $S$ be the first nontrivial subbundle in the Jordan-H\"{o}lder filtration of $E_1$. Then there is a surjection $(F_{\rel}^k)^*E^{\vee}\rightarrow S^{\vee}$, where $S^{\vee}$ is stable with $\mu(S^{\vee})=-\mu(S)=-\mu_{\max}((F_{\rel}^k)^*E)$. The above computation \eqref{eq-better-ineq} then shows that $\mu(E^{\vee})=-\mu(E)\le -L_{\max}(E)+(g-1)$, which gives the first inequality.
\end{proof}

\section{The characteristic-$3$ case}
In this section, $X$ will denote a smooth projective curve over an algebraically closed field $k$ of characteristic $p=3$, with genus $g\ge 2$. We will again denote $X^{(-1)}$ by $Y$, thus writing the relative Frobenius morphism $F_{\rel}:Y\rightarrow X$.

 We try to turn Cor.~\ref{cor-sym-pushforward-lb} around by taking a line bundle $L$, and asking whether $(F_{\rel})_*L$ can be expressed as the symmetric power of a rank-2 vector bundle. The problem can be reduced to vector bundles of trivial determinant.

\begin{lem}
\label{lem-max-fd-trivial-det}
Suppose $E$ is a rank-$2$ vector bundle, and $F_{\rel}^*E$ satisfies condition \eqref{eq-condition}, i.e. there exists a line bundle $L$ over $Y$ and an exact sequence
\begin{equation}
\label{eq-rank-two-condition}
0\rightarrow L\otimes\Omega_Y\rightarrow F_{\rel}^*E\rightarrow L\rightarrow 0
\end{equation}
Then after twisting by a line bundle, $E$ has trivial determinant.
\end{lem}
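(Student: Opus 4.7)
The plan is to reduce the statement to a parity computation on $\deg(E)$. Twisting $E$ by a line bundle $M\in\Pic(X)$ replaces $\det(E)$ by $\det(E)\otimes M^{\otimes 2}$, so the desired conclusion is equivalent to exhibiting an $M$ with $M^{\otimes 2}\cong\det(E)^{-1}$. Since $\Pic^0(X)$ is the group of $k$-points of the Jacobian of $X$, which is an abelian variety over an algebraically closed field, it is a divisible abelian group; consequently the squaring map on $\Pic(X)$ surjects onto the subgroup of line bundles of even degree. It therefore suffices to show that $\deg(\det(E))=\deg(E)$ is even.

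To check parity, I would simply take degrees in the given exact sequence \eqref{eq-rank-two-condition}:
$$p\deg(E)=\deg(F_{\rel}^*E)=\deg(L\otimes\Omega_Y)+\deg(L)=2\deg(L)+(2g-2),$$
which is visibly even. Since $p=3$ is odd throughout this section, this forces $\deg(E)$ itself to be even. Choosing any $M\in\Pic(X)$ with $M^{\otimes 2}\cong\det(E)^{-1}$ then yields
$$\det(E\otimes M)=\det(E)\otimes M^{\otimes 2}\cong\mathcal{O}_X,$$
which is the conclusion of the lemma.

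There is essentially no serious obstacle here beyond the parity computation; the only nontrivial input is the standard fact that multiplication by $2$ on the Jacobian is surjective. It is worth pointing out that the oddness of $p$ is used in an essential way: in characteristic $2$ the displayed equation would read $2\deg(E)=2\deg(L)+(2g-2)$, from which no parity information on $\deg(E)$ can be extracted, and the reduction to trivial determinant would not go through without additional input.
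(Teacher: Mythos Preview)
Your argument is correct. The parity computation is sound, and the appeal to divisibility of the Jacobian (with $2$ coprime to $p=3$) legitimately produces a square root of $\det(E)^{-1}$.

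The paper proceeds a bit differently: it works in $\Pic(X)$ written additively (so $c_1$ denotes the isomorphism class, not just the degree), uses a theta characteristic $\Theta$ with $\Theta^2\cong\Omega_X$, derives the relation $3c_1(E)=2c_1(L'\otimes\Theta)$ from the exact sequence, and then writes down an \emph{explicit} twist $M=(\det E)^4\otimes(L'\otimes\Theta)^{-3}$ satisfying $c_1(E\otimes M)=0$. Your route replaces this explicit construction by the abstract fact that even-degree line bundles are squares; this is shorter and avoids introducing a theta characteristic, at the cost of not naming $M$ concretely. Both arguments ultimately rest on $p$ being odd---the paper via existence of $\Theta$, you via the parity step---and both extend verbatim to any odd $p$, in line with the paper's remark that the lemma holds whenever $p\neq 2$.
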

\noindent
The lemma actually holds in all positive characteristic $p\neq 2$.
\begin{proof}
Pulling back further by the isomorphism $X\xrightarrow{\sim} Y$ of abstract schemes, we see that \eqref{eq-rank-two-condition} gives rise to an exact sequence
$$
0\rightarrow L'\otimes\Omega_X\rightarrow F_{\abs}^*E\rightarrow L'\rightarrow 0
$$
over $X$, for some line bundle $L'$. Let $\Theta$ be a theta characteristic over $X$ (which exists for $p\neq 2$). Then $\Theta^2\cong\Omega_X$. On the other hand, the above exact sequence shows that
$$
3c_1(E)=c_1(F_{\abs}^*E)=c_1(L'\otimes\Omega_X)+c_1(L')=2c_1(L'\otimes\Theta)
$$
Let $M=\det(E)\otimes(L'\otimes\Theta)^{-1}$. Then
$$
c_1(E\otimes M)=c_1(E)+2c_1(M)=3c_1(E)-2c_1(L'\otimes\Theta)=0
$$
and therefore $\det(E\otimes M)\cong\mathcal{O}_X$.
\end{proof}

\begin{prop}
\label{prop-p=3-easy-direction}
Suppose $E$ is a rank-$2$ vector bundle over $X$. Then the following are equivalent:
\begin{enumerate}[(i)]
	\item $E$ is maximally Frobenius-destabilized;
	\item $S^2(E)$ is maximally Frobenius-destabilized;
	\item $S^2(E)=(F_{\rel})_*L$ for some line bundle $L$;
\end{enumerate}
Furthermore, if $E$ is assumed to have trivial determinant, then we can replace $L$ by $T_Y$ in (iii).
\end{prop}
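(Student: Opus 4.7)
The plan is to reduce everything to Prop.~\ref{prop-r=p} using that $S^2(E)$ has rank $r=3=p$, which makes (ii)$\Leftrightarrow$(iii) immediate. The real content is (i)$\Leftrightarrow$(ii), which I would approach by carefully analyzing the Harder--Narasimhan filtration of $S^2(F_{\rel}^*E)=F_{\rel}^*S^2(E)$.

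For (ii)$\Rightarrow$(i), start from $S^2(E)$ maximally Frobenius-destabilized. Stability of $E$ is immediate: any sub-line-bundle $N\subset E$ with $\deg N\geq\mu(E)$ produces $N^{\otimes 2}\subset S^2(E)$ of slope $\geq\mu(S^2(E))$, contradicting stability of $S^2(E)$. To see that $F_{\rel}^*E$ satisfies \eqref{eq-condition}, I would rule out the remaining possibilities: if $F_{\rel}^*E$ were Jordan--H\"{o}lder equivalent to $N\oplus N$ for some line bundle $N$, then every Jordan--H\"{o}lder factor of $S^2(F_{\rel}^*E)$ would be $\cong N^{\otimes 2}$, making $S^2(F_{\rel}^*E)$ semistable and contradicting the positive degree gap of \eqref{eq-condition}. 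If $F_{\rel}^*E$ were stable, the top Harder--Narasimhan piece $L_1\hookrightarrow S^2(F_{\rel}^*E)\hookrightarrow F_{\rel}^*E\otimes F_{\rel}^*E$ would, by adjunction and $(F_{\rel}^*E)^\vee\cong F_{\rel}^*E\otimes(\det F_{\rel}^*E)^{-1}$, yield a nonzero endomorphism of $F_{\rel}^*E$ twisted by a line bundle of degree $\deg F_{\rel}^*E-\deg L_1=-(2g-2)<0$, impossible for stable $F_{\rel}^*E$. Hence $F_{\rel}^*E$ has a destabilizing line subbundle $B$; then $B^{\otimes 2}\cong L_1$, and the degree gap $2g-2$ of \eqref{eq-condition} for $S^2(F_{\rel}^*E)$ translates directly into \eqref{eq-condition} for $F_{\rel}^*E$.

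For (i)$\Rightarrow$(ii), Lem.~\ref{lem-sym-power} shows that $F_{\rel}^*S^2(E)$ satisfies \eqref{eq-condition}, so in view of Prop.~\ref{prop-r=p} it suffices to prove $S^2(E)$ is semistable. Using Lem.~\ref{lem-max-fd-trivial-det} I twist by a line bundle to arrange $\det E\cong\mathcal{O}_X$; then $S^2(E)$ becomes self-dual (in $\mathrm{char}\,k=3\neq 2$, via $E\cong E^\vee$ and $(S^2 V)^\vee\cong S^2(V^\vee)$), so any rank-$2$ destabilizing subsheaf dualizes to a rank-$1$ destabilizing subsheaf. It therefore suffices to exclude a line subsheaf $N\subset S^2(E)$ of slope exceeding $\mu(S^2(E))$. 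For such an $N$, the pullback $F_{\rel}^*N$ would lie in the top Harder--Narasimhan piece $L_1$ of $S^2(F_{\rel}^*E)$. But combining Lem.~\ref{lem-condition-equiv} with the Leibniz rule, the canonical connection $\nabla_{\can}$ on $L_1$ projects to an isomorphism $L_1\to L_2\otimes\Omega_Y$ onto the next Harder--Narasimhan graded piece; this isomorphism restricts non-trivially to any nonzero sub-line-sheaf $F_{\rel}^*N\subset L_1$, contradicting the descent criterion of Lem.~\ref{lem-pullback-connection}(i).

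For the trivial-determinant refinement: when $\det E\cong\mathcal{O}_X$, the constraint $\det F_{\rel}^*E\cong\mathcal{O}_Y$ combined with \eqref{eq-condition} forces the Harder--Narasimhan filtration $0\subset A\subset F_{\rel}^*E$ to satisfy $A^{\otimes 2}\cong\Omega_Y$, i.e., $A$ is a theta characteristic of $Y$. A direct computation then gives that $S^2(F_{\rel}^*E)$ has Harder--Narasimhan graded pieces $A^{\otimes 2}=\Omega_Y$, $\mathcal{O}_Y$, and $A^{\otimes -2}=T_Y$. Since the line bundle $L$ appearing in the proof of Prop.~\ref{prop-r=p} is precisely the last Harder--Narasimhan quotient of $F_{\rel}^*S^2(E)$, one obtains $L=T_Y$. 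The main obstacle I anticipate is the semistability of $S^2(E)$ in (i)$\Rightarrow$(ii): symmetric powers need not preserve semistability in positive characteristic, and the argument crucially exploits the rigidity of $\nabla_{\can}$ in the maximally destabilized setting (Lem.~\ref{lem-condition-equiv}) together with Cartier descent (Lem.~\ref{lem-pullback-connection}).
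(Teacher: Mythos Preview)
Your proof is correct and takes a genuinely different route from the paper's. The chief difference is that you avoid the Ilangovan--Mehta--Parameswaran theorem entirely, whereas the paper invokes it twice: in (i)$\Rightarrow$(ii), to conclude that $E^{\otimes 2}$ (and hence $S^2(E)$) is semistable from the semistability of $E$; and in (ii)$\Rightarrow$(i), to deduce that $F_{\rel}^*E$ is not semistable from the non-semistability of $S^2(F_{\rel}^*E)$. You replace the first use by the self-duality of $S^2(E)$ (after twisting to trivial determinant via Lem.~\ref{lem-max-fd-trivial-det}) combined with a descent argument: a destabilizing line subsheaf $N\subset S^2(E)$ would force $F_{\rel}^*N\subset L_1$, but the $\mathcal{O}_Y$-linear map $L_1\xrightarrow{\sim} L_2\otimes\Omega_Y$---obtained by applying Lem.~\ref{lem-condition-equiv} to $E$ and extending to $S^2$ via the Leibniz rule, \emph{not} by applying Lem.~\ref{lem-condition-equiv} directly to $S^2(E)$, whose hypothesis is precisely what is in question---cannot vanish on the nonzero subsheaf $F_{\rel}^*N$, contradicting Lem.~\ref{lem-pullback-connection}(i). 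You replace the second use by a direct case analysis on $F_{\rel}^*E$. Your approach is more self-contained and makes explicit why the rigidity of $\nabla_{\can}$ does the work; the paper's approach is shorter but imports a nontrivial external result.

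Two minor points. In your (ii)$\Rightarrow$(i) case analysis you treat ``$F_{\rel}^*E$ stable'' and ``$F_{\rel}^*E$ Jordan--H\"{o}lder equivalent to $N\oplus N$'', but omit the case where $F_{\rel}^*E$ is strictly semistable with two \emph{non-isomorphic} Jordan--H\"{o}lder factors of equal degree; the same argument disposes of it, since $S^2(F_{\rel}^*E)$ then has a filtration with graded pieces of equal slope and is therefore semistable, contradicting \eqref{eq-condition}. And when you assert $B^{\otimes 2}\cong L_1$, you should take $B$ to be the saturated (maximal) destabilizing line subbundle, so that the induced filtration on $S^2(F_{\rel}^*E)$ is genuinely the Harder--Narasimhan one.
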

\begin{proof}
The equivalence of (ii) and (iii) is already proved in Prop.~\ref{prop-r=p}.

For (i)$\implies$(ii), note that by Lem.~\ref{lem-sym-power} and the equivalence (i)$\iff$(ii) in Prop.~\ref{prop-r=p}, we only need to show that $S^2(E)$ is semistable. By twisting $E$ with a line bundle, we may assume $\deg(E)=0$. On the other hand, there is an exact sequence
$$
0\rightarrow N\rightarrow E^{\otimes 2}\rightarrow S^2(E)\rightarrow 0
$$
where $N$ is a line bundle. Since $E$ has degree zero by assumption, all three vector bundles in this sequence have vanishing slope. The theorem of S.~Ilangovan, V.~B.~Mehta, and A.~J.~Parameswaran \cite{Ila} shows that the tensor product of semistable vector bundles whose ranks add up to less than $p+2$ is again semistable. Therefore, $E^{\otimes 2}$ is semistable, so all quotients of $E^{\otimes 2}$ are of nonnegative degree. Since all quotients of $S^2(E)$ are quotients of $E^{\otimes 2}$, the same holds for $S^2(E)$.

For (ii)$\implies$(i), to prove the stability of $E$, let $M$ be any quotient line bundle of $E$. Therefore, we have a surjection $S^2(E)\rightarrow M^2$. But $S^2(E)$ is stable by assumption, so $\mu(S^2E)<2\deg(M)$. On the other hand, a Chern class computation shows that $\deg(S^2(E))=3\deg(E)$. Thus $2\mu(E)=\mu(S^2(E))<2\deg(M)$, and $E$ is stable. Again, because $S^2(F_{\rel}^*E)\cong F_{\rel}^*S^2(E)$ is not semistable, the Ilangovan-Mehta-Parameswaran theorem shows that $F_{\rel}^*E$ is not semistable. Thus $F_{\rel}^*E$ fits into an exact sequence
\begin{equation}
\label{eq-seq-pullback-rank-2}
0\rightarrow N\rightarrow F_{\rel}^*E\rightarrow R\rightarrow 0
\end{equation}
where $\deg(N)>\mu(F_{\rel}^*E)>\deg(R)$. Therefore, $S^2(F_{\rel}^*E)$ admits a filtration with successive quotients $N^{\otimes 2}$, $N\otimes R$, and $R^2$. The fact that they are stable, and $\deg(N^2)>\deg(N\otimes R)>\deg(R^2)$ shows that this is the Harder-Narasimhan filtration of $S^2(F_{\rel}^*E)$. On the other hand, $S^2(F_{\rel}^*E)$ satisfies condition \eqref{eq-condition}, so $N\cong R\otimes\Omega_Y$. Looking back at the exact sequence \eqref{eq-seq-pullback-rank-2}, we see that $E$ is maximally Frobenius-destabilized.

For the second claim, note that the assumption $\det(E)\cong\mathcal{O}_X$ implies $N\cong R^{-1}$ in \eqref{eq-seq-pullback-rank-2}. Hence $R^2\cong T_Y$. We also observe (cf.~proof of ``(iii)$\implies$(i)" in Prop.~\ref{prop-r=p}) that the Harder-Narasimhan filtration of $F_{\rel}^*(F_{\rel})_*L$ has successive quotients $L\otimes\Omega_Y^2$, $L\otimes\Omega_Y$, and $L$. It follows from the uniqueness of Harder-Narasimhan filtration that if $(F_{\rel})_*L\cong S^2(E)$, then $L\cong R^2\cong T_Y$.
\end{proof}

Restricting ourselves to the trivial-determinant case, we seek to express $(F_{\rel})_*T_Y$ as a second symmetric power. We do so using the following rather general machinery:

\begin{lem}
\label{lem-nondeg-quad}
There is a bijection of sets:
\begin{align*}
\Phi:&
\left( \begin{array}{c}
\text{isomorphism classes of rank-$2$} \\
\text{vector bundles $E$ over $X$} \end{array} \right)\Big/\text{twisting by line bundles}\xrightarrow{\sim}\\
&\left( \begin{array}{c}
\text{pairs $(F,q)$ where $F$ is a rank-$3$ vector bundle with trivial} \\
\text{determinant and $q\in H^0(X,S^2(F))$ is a nondegenerate quadric} \end{array}\right)\Big/\text{equivalences}
\end{align*}
where the underlying rank-$3$ bundle of $\Phi(E)$ is $S^2(E)\otimes\det(E)^{-1}$.
\end{lem}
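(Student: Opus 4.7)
My plan is to realize $\Phi$ as the classical discriminant construction and to invert it using Tsen's theorem; at a conceptual level, $\Phi$ realizes the exceptional isomorphism $\mathrm{PGL}_2 \cong \mathrm{SO}_3$ (valid in characteristic $\neq 2$) on the level of torsors. Given a rank-$2$ bundle $E$, set $F := S^2(E) \otimes \det(E)^{-1}$; the identity $\det S^2(E) \cong \det(E)^{\otimes 3}$ for rank-$2$ bundles gives $\det F \cong \mathcal{O}_X$. Locally, the polarized discriminant of binary quadratic forms,
\[
(a e_1^2 + b e_1 e_2 + c e_2^2) \otimes (a' e_1^2 + b' e_1 e_2 + c' e_2^2) \;\longmapsto\; (bb' - 2ac' - 2a'c)(e_1 \wedge e_2)^{\otimes 2},
\]
gives a nondegenerate symmetric pairing $S^2(E) \otimes S^2(E) \to \det(E)^{\otimes 2}$, which via $\det F \cong \mathcal{O}_X$ corresponds to a nondegenerate section $q \in H^0(X, S^2 F)$. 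If $E' = E \otimes N$ for a line bundle $N$, both $S^2(E)$ and $\det(E)$ are twisted by $N^{\otimes 2}$, so both $F$ and $q$ are unchanged; hence $\Phi$ descends to twist classes.

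For the inverse, a nondegenerate $q$ cuts out a hypersurface $C \subset \mathbb{P}(F)$ flat over $X$ with smooth plane conic fibers; each such fiber is isomorphic to $\mathbb{P}^1$, so $C \to X$ is a Brauer-Severi variety of relative dimension $1$. Since $X$ is a smooth curve over an algebraically closed field, $\mathrm{Br}(X) = 0$ by Tsen's theorem, so $C \cong \mathbb{P}(E)$ for some rank-$2$ bundle $E$, uniquely determined up to twisting by a line bundle. Set $\Psi(F, q) := [E]$.

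The identities $\Psi \circ \Phi = \mathrm{id}$ and $\Phi \circ \Psi = \mathrm{id}$ reduce to fiberwise computations. For $\Psi \circ \Phi$: the relative Veronese embedding $\mathbb{P}(E) \hookrightarrow \mathbb{P}(S^2(E) \otimes \det(E)^{-1})$ cuts out exactly the vanishing locus of the discriminant, so it coincides with the conic $C$ produced by $\Phi(E)$. Conversely, for $\Phi \circ \Psi$: a degree-$2$ relative embedding $\mathbb{P}(E) = C \hookrightarrow \mathbb{P}(F)$ yields $\mathcal{O}_{\mathbb{P}(F)}(1)|_{\mathbb{P}(E)} \cong \mathcal{O}_{\mathbb{P}(E)}(2) \otimes \pi^* M$ for some line bundle $M$ on $X$; pushing forward along $\pi \colon \mathbb{P}(E) \to X$ produces an isomorphism $F \xrightarrow{\sim} S^2(E) \otimes M$, and the trivial-determinant condition $\det F = \mathcal{O}_X$ forces $M \cong \det(E)^{-1}$, while matching $q$ with the image of the discriminant is a final local check. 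The main conceptual obstacle is the inverse construction, which relies essentially on Tsen's theorem to lift the smooth conic bundle to the projectivization of a rank-$2$ bundle; every other step is routine linear algebra or projective-bundle manipulation.
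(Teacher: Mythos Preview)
Your argument is correct and rests on the same two pillars as the paper's: the exceptional isomorphism $\mathrm{PGL}_2 \cong \mathrm{SO}_3$ in characteristic $\neq 2$, and Tsen's theorem. The difference is one of presentation. The paper packages everything cohomologically: a pair $(F,q)$ is an $\mathrm{SO}_3$-torsor, hence a $\mathrm{PGL}_2$-torsor; the long exact sequence in nonabelian \'etale cohomology attached to $1 \to \mathbb{G}_m \to \mathrm{GL}_2 \to \mathrm{PGL}_2 \to 1$ then gives the bijection directly, with $H^2_{\text{\'et}}(X,\mathbb{G}_m)=0$ supplying surjectivity and $H^1_{\text{\'et}}(X,\mathbb{G}_m)=\Pic(X)$ identifying the fibers. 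You instead unwind this geometrically: the discriminant pairing builds $\Phi$ by hand, and the inverse is the conic bundle $C \subset \mathbb{P}(F)$ viewed as a Brauer--Severi variety, with $\mathrm{Br}(X)=0$ forcing $C \cong \mathbb{P}(E)$. Your route is more explicit and makes the map $\Phi$ concrete enough to compute with (e.g.\ the discriminant formula), at the cost of the fiberwise verifications of $\Psi\circ\Phi$ and $\Phi\circ\Psi$; the paper's route is shorter and avoids those checks, but leaves the actual correspondence somewhat opaque. Both are standard and either would be acceptable here.
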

\noindent
We do not go into details in defining equivalences of pairs $(F,q)$, although they will be clear from the proof. For our application, we will be looking at a specific vector bundle $F$, for which the choice of $q$ is unique up to scaling.
\begin{proof}
A pair $(F,q)$ of rank-$3$ vector bundle $F$ with trivial determinant and $q\in H^0(X,S^2(F))$ a nondegenerate quadric corresponds to an $\mathrm{SO}_3$-principal bundle, via the standard representation of $\mathrm{SO}_3$. On the other hand, $\mathrm{SO}_3$ equipped with the standard representation is equivalent to $\mathrm{PGL}_2$ equipped with the adjoint representation $\mathfrak{g}$. Consider the exact sequence of groups
$$
0\rightarrow\mathbb{G}_m\rightarrow \mathrm{GL}_2\xrightarrow{p} \mathrm{PGL}_2\rightarrow 0
$$
Let $V$ be the standard representation of $\mathrm{GL}_2$. Then the precomposition of $\mathfrak{g}$ by $p$ is isomorphic to the representation $S^2(V)\otimes\det(V)^{-1}$ of $\mathrm{GL}_2$. Taking cohomology, we obtain
$$
H^1_{\text{\'{e}t}}(X,\mathbb{G}_m)\rightarrow H^1_{\text{\'{e}t}}(X,\mathrm{GL}_2)\rightarrow H^1_{\text{\'{e}t}}(X,\mathrm{PGL}_2)\rightarrow H^2_{\text{\'{e}t}}(X,\mathbb{G}_m)
$$
where $H^2_{\text{\'{e}t}}(X,\mathbb{G}_m)$ vanishes by Tsen's theorem. The bijection $\Phi$ then follows from the isomorphism $H^1_{\text{\'{e}t}}(X,\mathbb{G}_m)\cong\Pic(X)$.
\end{proof}

\label{relative-duality} For later computations in Prop.~\ref{prop-nondeg-quad}, we first work out an explicit description of relative duality for the morphism $F_{\rel}$. This material is standard. Around any point $x\in X$, pick an affine neighborhood $\Spec(A)\subset X$, whose preimage under $F_{\rel}$ is given by $\Spec(B)\subset Y$, with $B\cong A[x]/(x^3-a)$ for some $a\in A$. The morphism $F_{\rel}$ corresponds to the embedding $A\rightarrow B\cong A[x]/(x^3-a)$. By further shrinking, we may assume $\Omega_{A/k}\cong Ada$ and $\Omega_{B/k}\cong Bdx$ are both free; we abbreviate $\Omega_{A/k}$ and $\Omega_{B/k}$ by $\Omega_A$ and $\Omega_B$. There is a trace map (see, for example, \cite[Tag 0ADY]{Stacks}):
\begin{equation}
\label{eq-stacks-trace}
\mathrm{tr}:(F_{\rel})_*\Omega_Y\rightarrow\Omega_X
\end{equation}
defined locally by a map $\mathrm{tr}_{x,d}:(\Omega_B)_A\rightarrow\Omega_A$ of $A$-modules:
$$
\mathrm{tr}_{x,d}(dx)=0,\quad\mathrm{tr}_{x,d}(xdx)=0,\quad\text{and}\quad\mathrm{tr}_{x,d}(x^2dx)=da
$$
By the adjunction $((F_{\rel})_*, F_{\rel}^!)$, we have a nonzero morphism $\Omega_Y\rightarrow F_{\rel}^!\Omega_X$. Locally, this morphism is defined by a map $\Omega_B\rightarrow\Hom_A(B,\Omega_A)$, sending $dx$ to the $A$-linear map:
$$
1\mapsto 0,\quad x\mapsto 0,\quad\text{and}\quad x^2\mapsto da
$$
We see from the local description that this morphism is injective and surjective. In summary,

\begin{lem}
\label{lem-shriek-isom}
There is an isomorphism $\Omega_Y\rightarrow F_{\rel}^!\Omega_X$ which, by adjunction of $((F_{\rel})_*, F_{\rel}^!)$, gives the trace map \eqref{eq-stacks-trace}.\qed
\end{lem}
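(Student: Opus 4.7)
The plan is to verify both the isomorphism claim and the compatibility with the trace map by a direct local computation in the affine setup of \S\ref{relative-duality}. The cleanest conceptual organization is to \emph{start} from the trace map and produce the candidate map $\Omega_Y \to F_{\rel}^! \Omega_X$ by adjunction, and then check locally that this canonical map is an isomorphism; the compatibility in the statement is then automatic by construction.

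First, I would identify $F_{\rel}^! \Omega_X$ locally. Since $F_{\rel}$ is finite and flat, and $B = A[x]/(x^3 - a)$ is a free $A$-module of rank $3$ with basis $1, x, x^2$, we have $F_{\rel}^! \Omega_X \cong \mathcal{H}om_{\mathcal{O}_X}((F_{\rel})_*\mathcal{O}_Y, \Omega_X)$, which corresponds locally to $\Hom_A(B, \Omega_A)$ equipped with the $B$-module structure $(b \cdot \phi)(b') = \phi(bb')$. This is a free $A$-module of rank $3$ with dual basis $\delta_1^*, \delta_x^*, \delta_{x^2}^*$ (each sending its designated element of $\{1, x, x^2\}$ to $da$ and the other two to $0$). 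Applying the $((F_{\rel})_*, F_{\rel}^!)$ adjunction to the trace map $\mathrm{tr}$ in \eqref{eq-stacks-trace} produces a well-defined global morphism $\phi : \Omega_Y \to F_{\rel}^!\Omega_X$.

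Second, I would compute $\phi$ in coordinates. The adjunction is realized on global sections by the fact that the composition $(F_{\rel})_*\Omega_Y \xrightarrow{(F_{\rel})_*\phi} (F_{\rel})_* F_{\rel}^!\Omega_X \xrightarrow{\mathrm{ev}_1} \Omega_X$ equals $\mathrm{tr}$. Unwinding this, the local map $\Omega_B \to \Hom_A(B, \Omega_A)$ is the unique $B$-linear map sending the generator $dx$ to the $A$-linear form $\psi$ with $\psi(1) = 0$, $\psi(x) = 0$, $\psi(x^2) = da$: indeed, evaluating $\psi$ at $1$ recovers $\mathrm{tr}_{x,d}(dx) = 0$, and the $B$-linearity pins down the images of $x\, dx$ and $x^2\, dx$ via $(b \cdot \psi)(1) = \psi(b)$, giving $x\, dx \mapsto \psi(x) = 0$ and $x^2\, dx \mapsto \psi(x^2) = da$ after the counit, matching the remaining values of $\mathrm{tr}_{x,d}$.

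Third, I would verify that $\phi$ is an isomorphism by a short calculation on the $A$-basis. Using the $B$-module structure and $x^3 = a$, one computes
\begin{align*}
dx &\longmapsto \psi = da \cdot \delta_{x^2}^*,\\
x\, dx &\longmapsto x \cdot \psi, \text{ with } (x\psi)(1) = 0,\ (x\psi)(x) = da,\ (x\psi)(x^2) = a\psi(1) = 0,\\
x^2\, dx &\longmapsto x^2 \cdot \psi, \text{ with } (x^2\psi)(1) = da,\ (x^2\psi)(x) = a\psi(1) = 0,\ (x^2\psi)(x^2) = a\psi(x) = 0.
\end{align*}
Thus $\phi$ sends the $A$-basis $\{dx, x\, dx, x^2\, dx\}$ of $\Omega_B$ bijectively onto the dual basis $\{\delta_{x^2}^*, \delta_x^*, \delta_1^*\}$ (rescaled by $da$), so it is an $A$-module isomorphism, hence an isomorphism. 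This completes the proof.

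There is no real obstacle: the content of the lemma is bookkeeping. The only point requiring mild care is the $B$-module structure on $\Hom_A(B, \Omega_A)$ and the use of the relation $x^3 = a$ when computing $x^2 \cdot \psi$, which is what keeps the map in the range of rank $p = 3$ (and, implicitly, why the same verification for the analogous statement in characteristic $p$ works for all $p$).
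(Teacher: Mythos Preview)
Your proposal is correct and follows exactly the paper's approach: the paper's ``proof'' is the paragraph immediately preceding the lemma (the statement carries a \qed), which applies the $((F_{\rel})_*,F_{\rel}^!)$ adjunction to the trace map, writes down the resulting local map $dx\mapsto(1\mapsto 0,\ x\mapsto 0,\ x^2\mapsto da)$, and asserts it is bijective---you have simply supplied the explicit basis computation the paper omits. One notational slip: since you defined $\delta_{x^2}^*$ to already take the value $da$, you should write $\psi=\delta_{x^2}^*$ rather than $\psi=da\cdot\delta_{x^2}^*$, and drop the parenthetical ``(rescaled by $da$)''.
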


We also note that the isomorphism $F_{\rel}^*\Omega_X\xrightarrow{\sim}\Omega_Y^3$, which is locally given by a map $\Omega_A\otimes_AB\rightarrow\Omega_B^3$, sends $da\otimes 1$ to $(dx)^3$. The following lemma proves itself.

\begin{lem}
\label{lem-chain-isom}
For every line bundle $L$ over $Y$, there is a chain of isomorphisms
\begin{align*}
(F_{\rel})_*L\cong& (F_{\rel})_*\mathcal{H}\mathrm{om}(L^{-1}\otimes\Omega_Y,\Omega_Y)\cong (F_{\rel})_*\mathcal{H}\mathrm{om}(L^{-1}\otimes\Omega_Y,F_{\rel}^!\Omega_X)\\
\cong&\mathcal{H}\mathrm{om}((F_{\rel})_*(L^{-1}\otimes\Omega_Y),\Omega_X)
\cong\mathcal{H}\mathrm{om}((F_{\rel})_*(L^{-1}\otimes T_Y^2\otimes\Omega_Y^3), \Omega_X)\\
\cong&\mathcal{H}\mathrm{om}((F_{\rel})_*(L^{-1}\otimes T_Y^2)\otimes\Omega_X,\Omega_X)\cong ((F_{\rel})_*(L^{-1}\otimes T_Y^2))^{\vee}
\end{align*}
where Lem.~\ref{lem-shriek-isom} is used in the second isomorphism, the adjunction $((F_{\rel})_*, F_{\rel}^!)$ in the third, and the projection formula in the fifth.\qed
\end{lem}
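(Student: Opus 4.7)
The plan is to establish each of the six isomorphisms in the displayed chain individually; the statement is essentially formal once Lem.~\ref{lem-shriek-isom} and the local computation in \S\ref{relative-duality} are in hand. The first isomorphism is the standard identification $L \cong \mathcal{H}\mathrm{om}_{\OO_Y}(L^{-1}\otimes\Omega_Y,\Omega_Y)$ of line bundles, pushed forward by $F_{\rel}$. The second substitutes $\Omega_Y \cong F_{\rel}^!\Omega_X$ via Lem.~\ref{lem-shriek-isom}. The third applies the sheafified form of the $((F_{\rel})_*, F_{\rel}^!)$ adjunction for the finite flat morphism $F_{\rel}$:
$$
(F_{\rel})_*\mathcal{H}\mathrm{om}_{\OO_Y}(\mathcal{F},F_{\rel}^!\mathcal{G})\xrightarrow{\sim}\mathcal{H}\mathrm{om}_{\OO_X}((F_{\rel})_*\mathcal{F},\mathcal{G}),
$$
specialised to $\mathcal{F}=L^{-1}\otimes\Omega_Y$ and $\mathcal{G}=\Omega_X$.

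The fourth step uses $T_Y\cong\Omega_Y^{-1}$ to rewrite $L^{-1}\otimes\Omega_Y \cong L^{-1}\otimes T_Y^2\otimes\Omega_Y^3$; here the characteristic-$3$ Frobenius contributes the essential identification $F_{\rel}^*\Omega_X \cong \Omega_Y^3$, already encoded by the formula $da\otimes 1 \mapsto (dx)^3$ recorded in \S\ref{relative-duality}. The fifth applies the projection formula for the finite morphism $F_{\rel}$, moving $F_{\rel}^*\Omega_X$ outside the pushforward as $\Omega_X$. The sixth is the tautology $\mathcal{H}\mathrm{om}(\mathcal{F}\otimes\Omega_X,\Omega_X)\cong\mathcal{F}^{\vee}$ applied to the locally free sheaf $\mathcal{F}=(F_{\rel})_*(L^{-1}\otimes T_Y^2)$.

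There is no serious technical obstacle here, and the author's remark that the lemma ``proves itself" is apt. The only thing that requires care is verifying that each step is canonical and natural in $L$, so that the composition is a bona fide global isomorphism rather than merely an abstract equality of sheaves. All six steps are standard functorial identities, and the three requiring nontrivial input—the second via Lem.~\ref{lem-shriek-isom}, the third via the finite-morphism duality adjunction, and the fifth via the local identification of $F_{\rel}^*\Omega_X$ with $\Omega_Y^3$—are precisely the ones whose explicit local descriptions were prepared in \S\ref{relative-duality}. Composing them therefore yields the claimed natural isomorphism $(F_{\rel})_*L \cong ((F_{\rel})_*(L^{-1}\otimes T_Y^2))^{\vee}$.
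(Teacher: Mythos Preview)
Your proposal is correct and matches the paper's approach exactly: the paper gives no proof beyond the annotations already present in the statement (it remarks that the lemma ``proves itself'' and ends with \qed), and your write-up simply unpacks those annotations step by step, identifying the first, fourth, and sixth isomorphisms as the tautological line-bundle identities they are. There is nothing to add.
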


\noindent
In particular, $(F_{\rel})_*T_Y$ is self-dual and $(F_{\rel})_*\mathcal{O}_Y$ is dual to $(F_{\rel})_*(T_Y^2)$.

 For notational simplicity, we will now use $F$ to denote $(F_{\rel})_*T_Y$. Observe that there is a canonical surjection
\begin{equation}
\label{eq-canonical-surj}
S^2(F)\rightarrow(F_{\rel})_*(T_Y^2)\rightarrow 0
\end{equation}
Since $F$ is self-dual (cf.~\S\ref{relative-duality}), it has trivial determinant. Given any vector bundle $E$ over $X$, the nondegenerate pairing on $E^{\otimes 2}$ and $(E^{\vee})^{\otimes 2}$ induces, using $2<3$, a nondegenerate pairing on $S^2(E)$ and $S^2(E^{\vee})$. Thus, the dual of \eqref{eq-canonical-surj} gives a canonical injection
\begin{equation}
\label{eq-canonical-inj}
0\rightarrow (F_{\rel})_*\mathcal{O}_Y\rightarrow S^2(F^{\vee})\cong S^2(F)
\end{equation}
In fact, the above two morphisms combine into an exact sequence:

\begin{prop}
There is a canonically defined exact sequence
\begin{equation}
\label{eq-canonical-seq}
0\rightarrow(F_{\rel})_*\mathcal{O}_Y\rightarrow S^2(F)\rightarrow(F_{\rel})_*(T_Y^2)\rightarrow 0
\end{equation}
\end{prop}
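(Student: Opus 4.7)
The plan is to verify the sequence is exact locally on $X$. With $\iota$ already injective, $\pi$ already surjective, and the ranks matching $(3+3=6)$, it suffices to (a) show that $\pi\circ\iota$ vanishes, giving $\mathrm{im}(\iota)\subseteq\ker(\pi)$, and (b) show this inclusion is an equality. For (b), both $\mathrm{im}(\iota)\cong(F_{\rel})_*\mathcal{O}_Y$ and $\ker(\pi)$ are locally free of rank $3$, and a Riemann-Roch computation yields $\deg((F_{\rel})_*\mathcal{O}_Y)=(p-1)(g-1)=2(g-1)$, while $\deg(\ker\pi)=\deg(S^2F)-\deg((F_{\rel})_*T_Y^2)=0-(2-2g)=2(g-1)$ (using $\mu(F)=0$). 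Thus the quotient $\ker\pi/\mathrm{im}(\iota)$ is a torsion sheaf of degree zero on the smooth curve $X$, hence zero.

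The heart of the argument is step (a), which I would verify in a local chart as in \S\ref{relative-duality}: take $\Spec(A)\subset X$ with $F_{\rel}^{-1}\Spec(A)=\Spec(B)$, $B=A[x]/(x^3-a)$, and trivialize $T_Y$. Then $F$ becomes the free $A$-module $B$, and $F\otimes F$ becomes $B\otimes_A B\cong A[x,y]/(x^3-a,y^3-a)$. Setting $t:=y-x$, the multiplication map $B\otimes_A B\to B$ (the local form of $\pi$) has kernel $(t)$. The decisive observation is that in characteristic $3$ the freshman's dream gives $t^3=y^3-x^3=0$, so $(t)$ is a $3$-step nilpotent ideal yielding a filtration $0\subset(t^2)\subset(t)\subset B\otimes_A B$ in which each graded piece is isomorphic to $B$. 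The swap $\tau:y\leftrightarrow x$ sends $t\mapsto-t$ and hence acts as $(+1,-1,+1)$ on these three pieces; passing to $\tau$-invariants (legitimate since $p\neq 2$) identifies $S^2_AB$ with a two-step extension $0\subset(t^2)\subset S^2_AB$ whose quotient is $B$ (realizing the local $\pi$) and whose sub $(t^2)$ has rank $3$.

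It remains to check that $\iota$ lands in $(t^2)$. For this I would trace the chain in Lem.~\ref{lem-chain-isom} locally: both the self-duality $F\cong F^\vee$ and the duality $(F_{\rel})_*\mathcal{O}_Y\cong((F_{\rel})_*T_Y^2)^\vee$ reduce, in the basis $\{1,x,x^2\}$, to the pairing sending $(b,b')$ to the coefficient of $x^2$ in $bb'$, reflecting the explicit formula for the trace in \eqref{eq-stacks-trace}. A direct calculation then expresses each $\iota(x^i)$ as a linear combination of monomial basis elements of $S^2_AB$, and applying $\pi$ one finds $\pi(\iota(x^i))$ to be $3\cdot(x^i\partial_x^2)$ up to the factor from the symmetric-product convention; this vanishes precisely because $p=3$.

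The main obstacle is the duality bookkeeping in the final step: the chain of Lem.~\ref{lem-chain-isom} passes through several identifications (trace, $\Omega_Y\cong F_{\rel}^!\Omega_X$, projection formula) each of which must be followed explicitly on basis elements. The characteristic-$3$ hypothesis enters essentially in two ways---as the nilpotency $t^3=0$, which supplies a rank-$3$ subsheaf on the nose, and as the coefficient $3=0$, which kills the composition $\pi\circ\iota$---so the exact sequence is genuinely specific to $p=3$.
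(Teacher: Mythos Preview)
Your approach is correct but differs substantially from the paper's. For step (a), the paper bypasses all local computation: since $(F_{\rel})_*\mathcal{O}_Y$ and $(F_{\rel})_*(T_Y^2)$ are both stable by Lem.~\ref{lem-pushforward-line-bundle}, and the former has slope $(2g-2)/3$ strictly greater than the latter's slope $-(2g-2)/3$, any morphism between them is automatically zero. For step (b), the paper observes that $\iota$, being the dual of a surjection of vector bundles, is a \emph{subbundle} inclusion; hence its image is a saturated rank-$3$ subsheaf of the rank-$3$ kernel, so they coincide by rank alone---no degree computation is needed.

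Your route is more hands-on: you propose to verify $\pi\circ\iota=0$ in a local chart by tracing the trace pairing and seeing a factor of $3$ emerge, and you close step (b) with a Riemann--Roch degree match. This is valid (indeed, the Frobenius-algebra identity $\pi\circ\Delta(b)=b\cdot\sum_i e_i e_i^*=3x^2b$ makes the factor of $3$ transparent), and it has the virtue of exposing the explicit local structure that the paper only unpacks later in Prop.~\ref{prop-nondeg-quad}. The cost is the duality bookkeeping you flag as the main obstacle; the paper's stability argument sidesteps this entirely and is considerably shorter. Note also a small slip: your formula $\pi(\iota(x^i))=3\cdot(x^i\partial_x^2)$ should read $3x^{i+2}\partial_x^2$ after the trivializations, though of course this does not affect the vanishing.
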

\begin{proof}
Indeed, we have a composition $(F_{\rel})_*\mathcal{O}_Y\rightarrow S^2(F)\rightarrow(F_{\rel})_*(T_Y^2)$ from \eqref{eq-canonical-surj} and \eqref{eq-canonical-inj}. This is a zero morphism, since
$$
\deg((F_{\rel})_*\mathcal{O}_Y)=2g-2>-(2g-2)=\deg((F_{\rel})_*(T_Y^2))
$$
by a computation using the Riemann-Roch formula, and the two vector bundles are stable by Lem.~\ref{lem-pushforward-line-bundle}. Therefore the injective morphism $(F_{\rel})_*\mathcal{O}_Y\rightarrow S^2(F)$ factors through the kernel of $S^2(F)\rightarrow(F_{\rel})_*(T_Y^2)$. By rank considerations, $(F_{\rel})_*\mathcal{O}_Y$ is identified with this kernel.
\end{proof}

\begin{cor}
\label{cor-set-atmost-one}
$S^2(F)$ has a unique global section up to scaling, and it is given by the image of $1\in H^0(X,(F_{\rel})_*\mathcal{O}_Y)$ under the canonical injection \eqref{eq-canonical-inj}.
\end{cor}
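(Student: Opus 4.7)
\begin{proof}[Proof sketch]
The plan is to apply the global sections functor to the canonical exact sequence \eqref{eq-canonical-seq} and compute the outer terms directly using the fact that pushforward by an affine morphism is exact and preserves global sections.

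First, since $F_{\rel}:Y\rightarrow X$ is finite (hence affine), applying $H^0(X,-)$ to \eqref{eq-canonical-seq} yields the exact sequence
\begin{equation*}
0\rightarrow H^0(Y,\mathcal{O}_Y)\rightarrow H^0(X,S^2(F))\rightarrow H^0(Y,T_Y^2).
\end{equation*}
Next, I compute the two outer groups. Since $Y$ is a smooth connected projective curve over $k$, we have $H^0(Y,\mathcal{O}_Y)=k$, generated by the constant function $1$. On the other hand, $\deg(T_Y^2)=-2(2g-2)<0$ because $g\ge 2$, so $H^0(Y,T_Y^2)=0$.

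Combining these, the map $k=H^0(X,(F_{\rel})_*\mathcal{O}_Y)\rightarrow H^0(X,S^2(F))$ is an isomorphism, and by construction it sends $1$ to its image under the canonical injection \eqref{eq-canonical-inj}. This proves both assertions.
\end{proof}

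I expect essentially no obstacle here: the only content is the identification of $H^0(X,(F_{\rel})_*L)$ with $H^0(Y,L)$ via the affineness of $F_{\rel}$, and the vanishing of $H^0(Y,T_Y^2)$ from a degree count. The structural work has already been absorbed into the construction of the canonical sequence \eqref{eq-canonical-seq}.
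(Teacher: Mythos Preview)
Your proof is correct and follows essentially the same approach as the paper: take global sections of the exact sequence \eqref{eq-canonical-seq} and show the right-hand term vanishes. The only minor difference is that the paper argues $H^0(X,(F_{\rel})_*(T_Y^2))=0$ via stability and negative degree of the pushforward, whereas you use the identification $H^0(X,(F_{\rel})_*T_Y^2)\cong H^0(Y,T_Y^2)$ and a direct degree count on the line bundle---a slightly more elementary but entirely equivalent step.
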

\begin{proof}
\eqref{eq-canonical-seq} gives rise to an exact sequence
$$
0\rightarrow H^0(X, (F_{\rel})_*\mathcal{O}_Y)\rightarrow H^0(X,S^2(F))\rightarrow H^0(X,(F_{\rel})_*(T_Y^2))
$$
Since $(F_{\rel})_*(T_Y^2)$ is stable of negative degree $-(2g-2)$, there is no nonzero morphism $\mathcal{O}_X\rightarrow (F_{\rel})_*(T_Y^2)$. Therefore $H^0(X,(F_{\rel})_*(T_Y^2))=0$, so $H^0(X,S^2(F))=H^0(X, (F_{\rel})_*\mathcal{O}_Y)$ has a unique section up to scaling.
\end{proof}

\begin{prop}
\label{prop-nondeg-quad}
The image of $1\in H^0(X,(F_{\rel})_*\mathcal{O}_Y)$ under the canonical injection \eqref{eq-canonical-inj} is a nondegenerate quadric on every fiber of $S^2(F)$.
\end{prop}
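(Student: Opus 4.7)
The plan is to exploit the one-dimensionality of $H^0(X, S^2(F))$ from Cor.~\ref{cor-set-atmost-one}: it suffices to exhibit \emph{any} nonzero section $q \in H^0(X, S^2(F))$ that is fiberwise nondegenerate, for then the image of $1$---nonzero because \eqref{eq-canonical-inj} is injective---must be a nonzero scalar multiple of $q$ and thereby inherit the nondegeneracy.

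The natural candidate for $q$ arises from the self-duality isomorphism $\phi \colon F \xrightarrow{\sim} F^\vee$ given by Lem.~\ref{lem-chain-isom} (specialized to $L = T_Y$). First I will verify that the bilinear form $b_\phi(v, w) := \phi(v)(w)$ is symmetric, so that it genuinely defines an element $q \in H^0(X, S^2(F^\vee)) \cong H^0(X, S^2(F))$. Working in an affine open $\Spec A \subset X$ with preimage $\Spec B = \Spec A[x]/(x^3 - a)$ as in \S\ref{relative-duality}, unwinding the chain of isomorphisms in Lem.~\ref{lem-chain-isom}---using Lem.~\ref{lem-shriek-isom}, the adjunction $((F_{\rel})_*, F_{\rel}^!)$, and the projection formula---identifies $b_\phi$ with the composition $F \otimes F \to (F_{\rel})_*(T_Y^2) \xrightarrow{\mathrm{tr}} \mathcal{O}_X$ of multiplication in $B$ followed by the relative trace. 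Locally, $b_\phi(s_1 \partial_x, s_2 \partial_x) = \mathrm{tr}(s_1 s_2 \partial_x^2)$ with $\mathrm{tr}(x^i \partial_x^2) = \delta_{i, 2}$ (this follows by combining the local trace formula of \S\ref{relative-duality} with the identification $T_Y^2 \cong \Omega_Y \otimes F_{\rel}^* T_X$). Symmetry is then immediate from the commutativity of $B$.

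Next I will check fiberwise nondegeneracy. In the local basis $f_i = x^i \partial_x$ of $F$, the Gram matrix of $b_\phi$ has $(i, j)$-entry $\mathrm{tr}(x^{i+j} \partial_x^2)$; reducing modulo $x^3 = a$, this is the antidiagonal matrix
\[
G = \begin{pmatrix} 0 & 0 & 1 \\ 0 & 1 & 0 \\ 1 & 0 & 0 \end{pmatrix}
\]
of determinant $-1$, nonzero in characteristic $3$. Hence $q$ is fiberwise nondegenerate on every such affine open, and since these cover $X$, $q$ is fiberwise nondegenerate on all of $X$. By the uniqueness assertion, the same holds for the image of $1$, completing the proof.

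The hard part will be faithfully unwinding the local description of the self-duality pairing: it requires careful bookkeeping along the chain of isomorphisms in Lem.~\ref{lem-chain-isom}, paralleling the trace computation in \S\ref{relative-duality}. An equivalent, more hands-on alternative would bypass self-duality altogether and track $1 \in H^0(X, (F_{\rel})_*\mathcal{O}_Y)$ directly through the identifications $(F_{\rel})_*\mathcal{O}_Y \cong ((F_{\rel})_*(T_Y^2))^\vee$ and $(S^2(F))^\vee \cong S^2(F)$, obtaining its local image as a scalar multiple of $f_0 f_2 - f_1^2$ and then computing its discriminant directly.
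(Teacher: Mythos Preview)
Your proposal is correct and closely parallels the paper's argument, but with a neat organizational twist. The paper tracks $1\in H^0(X,(F_{\rel})_*\mathcal{O}_Y)$ \emph{directly} through the chain of isomorphisms in Lem.~\ref{lem-chain-isom} (applied with $L=\mathcal{O}_Y$), landing on the explicit element $X_1^{\vee}X_3^{\vee}+(X_2^{\vee})^2\in S^2(F^{\vee})$, and then transports nondegeneracy through the self-duality $S^2(F^{\vee})\cong S^2(F)$. You instead invoke Cor.~\ref{cor-set-atmost-one} to reduce to exhibiting \emph{any} fiberwise nondegenerate section, and produce one from the self-duality pairing itself (Lem.~\ref{lem-chain-isom} with $L=T_Y$), computing its Gram matrix in the basis $x^i\partial_x$. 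The local computations are essentially the same trace calculation; what you gain is that you never have to chase $1$ through the dual of the surjection \eqref{eq-canonical-surj}, at the cost of needing the one-dimensionality of $H^0(X,S^2(F))$ as an extra input. Your closing remark that one could alternatively track $1$ directly and obtain a scalar multiple of $f_0f_2-f_1^2$ is exactly what the paper does.
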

\begin{proof}
We compute the image of $1\in H^0(X,(F_{\rel})_*\mathcal{O}_Y)$ in $H^0(X,S^2(F))$ explicitly. In order to do so, we have to understand the isomorphism $(F_{\rel})_*\mathcal{O}_Y\cong ((F_{\rel})_*(T_Y^2))^{\vee}$ given by relative duality, find out the image of $1$, and then dualize the canonical surjection $S^2(F)\rightarrow (F_{\rel})_*(T_Y^2)$.

Around any point $x\in X$, pick a sufficiently small affine neighborhood, and use the notations in \S\ref{relative-duality} for the relevant local expressions. The chain of isomorphisms in Lem.~\ref{lem-chain-isom} can be expressed locally as
$$
\xymatrix@C=-20pt{ & \Hom_B(\Omega_B, \Hom_A(B,\Omega_A))_A \ar@{=}[rr] & & \Hom_A((\Omega_B)_A,\Omega_A) \ar@{=}[dr]_{\text{contraction}\quad} & \\
\Hom_B(\Omega_B,\Omega_B)_A\ar@{=}[ur]^{F_{\rel}^!\Omega_X\cong\Omega_Y\quad} \ar[rrru]_{\quad\mathrm{identity}\mapsto\mathrm{tr}_{x,d}} & & & & \Hom_A((T_B^2\otimes\Omega_B^3)_A,\Omega_A)\ar@{=}[d]_{\text{projection formula}} \\
B_A\ar@{=}[u]_{\text{contraction}} & & ((T_B^2)_A)^{\vee}\ar@{=}[ll]_{?} & & \Hom_A((T_B^2)_A\otimes\Omega_A,\Omega_A)\ar@{=}[ll]_{\text{contraction}\quad\quad\quad}}
$$
where $1\in B_A$ is sent to the identity morphism in $\Hom_B(\Omega_B,\Omega_B)_A$, and consequently the map $\mathrm{tr}_{x,d}$ in $\Hom_A((\Omega_B)_A,\Omega_A)$, which is then the morphism
$$
\left(\frac{\partial}{\partial x}\right)^{\otimes 2}\otimes da\mapsto 0,\quad x\left(\frac{\partial}{\partial x}\right)^{\otimes 2}\otimes da\mapsto 0,\quad\text{and}\quad x^2\left(\frac{\partial}{\partial x}\right)^{\otimes 2}\otimes da\mapsto da
$$
in $\Hom_A((T_B^2)_A\otimes\Omega_A,\Omega_A)$ after contraction and applying the projection formula. This is then finally the element
$$
\left(\frac{\partial}{\partial x}\right)^{\otimes 2}\mapsto 0,\quad x\left(\frac{\partial}{\partial x}\right)^{\otimes 2}\mapsto 0,\quad\text{and}\quad x^2\left(\frac{\partial}{\partial x}\right)^{\otimes 2}\mapsto 1
$$
in $((T_B^2)_A)^{\vee}$. Let $X_1=\partial/\partial x$, $X_2=x(\partial/\partial x)$, and $X_3=x^2(\partial/\partial x)$ be an $A$-basis for $(T_B)_A$, and $\{X_iX_j\}$ a basis for $S^2((T_B)_A)$. The composition $B_A\xrightarrow{\sim}((T_B^2)_A)^{\vee}\rightarrow S^2((T_B)_A)^{\vee}$ sends $1\in B_A$ to the sum $(X_1X_3)^{\vee}+(X_2^2)^{\vee}$, where $(X_1X_3)^{\vee}$ denotes the dual basis for $X_1X_3$, and similarly for $(X_2^2)^{\vee}$. Since the isomorphism for symmetric powers$S^2((T_B)_A)^{\vee}\cong S^2(((T_B)_A)^{\vee})$ is induced from that for the tensor powers, the element in $S^2(((T_B)_A)^{\vee})$ corresponding to $(X_1X_3)^{\vee}+(X_2^2)^{\vee}$ is $X_1^{\vee}X_3^{\vee}+(X_2^{\vee})^2$. This expression is a nondegenerate quadric. Since nondegeneracy is preserved under linear equivalences, the corresponding expression in $S^2((T_B)_A)$ via self-duality of $(T_B)_A$ is still nondegenerate. The proof is complete, as the choice of $x\in X$ is arbitrary.
\end{proof}

\begin{cor}
\label{cor-trivial-det}
There exists a rank-$2$ vector bundle $E$ over $X$ with trivial determinant, such that $S^2(E)\cong F$. Furthermore, the choice of such a vector bundle $E$ is unique up to twisting by a line bundle in $\Pic(X)[2]$.
\end{cor}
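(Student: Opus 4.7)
The plan is to apply the bijection $\Phi$ of Lem.~\ref{lem-nondeg-quad} backwards to the pair $(F,q)$, where $F=(F_{\rel})_*T_Y$ and $q\in H^0(X,S^2(F))$ is the nondegenerate quadric produced in Prop.~\ref{prop-nondeg-quad}. Since $F$ has trivial determinant (it is self-dual by \S\ref{relative-duality}) and $q$ is nondegenerate, $\Phi^{-1}$ furnishes a rank-$2$ vector bundle $E_0$, unique up to twisting by a line bundle, with
$$S^2(E_0)\otimes\det(E_0)^{-1}\cong F.$$

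For existence, I would next arrange for the determinant to be trivial. Rearranging and applying the projection formula,
$$S^2(E_0)\cong F\otimes\det(E_0)\cong(F_{\rel})_*\bigl(T_Y\otimes F_{\rel}^*\det(E_0)\bigr),$$
so $S^2(E_0)$ is the Frobenius-pushforward of a line bundle. The implication (iii)$\Rightarrow$(i) of Prop.~\ref{prop-p=3-easy-direction} then shows $E_0$ is maximally Frobenius-destabilized, and Lem.~\ref{lem-max-fd-trivial-det} provides a line bundle $N$ with $N^2\cong\det(E_0)^{-1}$. Setting $E=E_0\otimes N$ gives $\det(E)\cong\mathcal{O}_X$, and
$$S^2(E)\cong S^2(E_0)\otimes N^2\cong F\otimes\det(E_0)\otimes\det(E_0)^{-1}\cong F.$$

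For uniqueness, suppose $E'$ is another rank-$2$ bundle with $\det(E')\cong\mathcal{O}_X$ and $S^2(E')\cong F$. By Cor.~\ref{cor-set-atmost-one}, $H^0(X,S^2(F))$ is one-dimensional, so the quadric on $F$ produced by $E'$ under $\Phi$ is automatically a nonzero scalar multiple of $q$, and the pairs $\Phi(E')$ and $\Phi(E)$ are equivalent. This forces $E'\cong E\otimes N'$ for some line bundle $N'$, and comparing determinants yields $(N')^2\cong\mathcal{O}_X$, i.e., $N'\in\Pic(X)[2]$.

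The main obstacle I anticipate is the determinant-normalization step: a priori, a bundle $E_0$ obtained from $\Phi^{-1}(F,q)$ need not have square determinant, so the existence of the twist $N$ is not formal. The key input is Lem.~\ref{lem-max-fd-trivial-det}, whose proof relies on the existence of a theta characteristic and therefore requires $p\neq 2$; once this is granted, everything else amounts to combining Prop.~\ref{prop-nondeg-quad}, Cor.~\ref{cor-set-atmost-one}, and the bijection $\Phi$ with routine bookkeeping.
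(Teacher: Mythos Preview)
Your proposal is correct and follows essentially the same route as the paper's proof: both apply $\Phi^{-1}$ to the pair $(F,q)$ using Prop.~\ref{prop-nondeg-quad} and Cor.~\ref{cor-set-atmost-one}, invoke Prop.~\ref{prop-p=3-easy-direction} to see the resulting bundle is maximally Frobenius-destabilized, use Lem.~\ref{lem-max-fd-trivial-det} to normalize the determinant, and finish uniqueness by comparing determinants of a line-bundle twist. Your write-up is slightly more explicit in places (the projection-formula identification of $S^2(E_0)$ as a Frobenius pushforward, and the verification $S^2(E)\cong F$), but the argument is the same.
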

\begin{proof}
Prop.~\ref{prop-nondeg-quad} and Cor.~\ref{cor-set-atmost-one} together show that there exists a unique nondegenerate quadric $q\in H^0(X,S^2(F))$ up to scaling. Therefore, Lem.~\ref{lem-nondeg-quad} shows that all rank-$2$ vector bundles $E$ with $S^2(E)\otimes\det(E)^{-1}\cong F$ occur, by uniqueness of $q$, as preimage of the pair $(F,q)$ under $\Phi$. Thus they all differ by line bundle twists. Given such a vector bundle $E$, since $S^2(E)\cong F\otimes\det(E)$ is maximally Frobenius-destabilized, the same holds for $E$ itself (Prop.~\ref{prop-p=3-easy-direction}). It follows from Lem.~\ref{lem-max-fd-trivial-det} that after twisting by a line bundle, $E$ has trivial determinant, and thus $S^2(E)\cong F$. Finally, note that given two rank-$2$ vector bundles $E$, $E'$ with trivial determinant such that $E'=E\otimes L$ for some line bundle $L$, then $L\in\Pic(X)[2]$ because $\det(E')\cong\det(E)\otimes L^2$.
\end{proof}

 We summarize the results of this section.

\begin{thm}
\label{thm-trivial-det}
In characteristic $3$, maximally Frobenius-destabilized vector bundles of rank-$2$ with trivial determinant exist over an arbitrary smooth projective curve of genus $g\ge 2$, and are unique up to twisting by an arbitrary line bundle in $\Pic(X)[2]$. Furthermore, such vector bundles are precisely the rank-$2$ vector bundles $E$ satisfying $S^2(E)=(F_{\rel})_*T_Y$.
\end{thm}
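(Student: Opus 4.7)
The plan is that this theorem is the natural synthesis of Prop.~\ref{prop-p=3-easy-direction} and Cor.~\ref{cor-trivial-det}; since both are in hand, the work consists only of assembling them carefully. I would proceed in three short steps.

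First, I would verify the characterization: a rank-$2$ bundle $E$ with $\det(E)\cong\mathcal{O}_X$ is maximally Frobenius-destabilized if and only if $S^2(E)\cong(F_{\rel})_*T_Y$. The ``only if'' direction is the final assertion of Prop.~\ref{prop-p=3-easy-direction}, where the exact sequence \eqref{eq-seq-pullback-rank-2}, the assumption $\det(E)\cong\mathcal{O}_X$, and uniqueness of the Harder--Narasimhan filtration of $F_{\rel}^*(F_{\rel})_*L$ combine to force $L\cong T_Y$. The ``if'' direction is the implication (iii)$\Rightarrow$(i) of the same proposition.

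Second, for existence I would invoke Cor.~\ref{cor-trivial-det} directly: it produces a rank-$2$ bundle $E$ with $\det(E)\cong\mathcal{O}_X$ and $S^2(E)\cong(F_{\rel})_*T_Y$, which by the characterization just established is maximally Frobenius-destabilized.

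Third, for uniqueness, suppose $E$ and $E'$ are both maximally Frobenius-destabilized of rank $2$ with trivial determinant. By the characterization, $S^2(E)\cong S^2(E')\cong(F_{\rel})_*T_Y$, so the ``unique up to twisting by a line bundle'' part of Cor.~\ref{cor-trivial-det} yields $E'\cong E\otimes L$ for some line bundle $L$. Comparing determinants gives $\mathcal{O}_X\cong\det(E')\cong L^{\otimes 2}$, so $L\in\Pic(X)[2]$. Conversely, for any $L\in\Pic(X)[2]$, the twist $E\otimes L$ still has trivial determinant and $S^2(E\otimes L)\cong S^2(E)\otimes L^{\otimes 2}\cong(F_{\rel})_*T_Y$, so it is again maximally Frobenius-destabilized.

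At the level of this theorem there is no real obstacle. The genuine technical work of the section lies upstream, in Prop.~\ref{prop-nondeg-quad}, where an explicit local computation exhibits the canonical global section of $S^2(F)$ arising from \eqref{eq-canonical-seq} as fiberwise a nondegenerate quadric; that nondegeneracy is precisely what activates the bijection $\Phi$ of Lem.~\ref{lem-nondeg-quad} and delivers Cor.~\ref{cor-trivial-det}, from which the present theorem follows as a formal consequence.
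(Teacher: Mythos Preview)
Your proposal is correct and follows essentially the same approach as the paper's proof, which simply cites Prop.~\ref{prop-p=3-easy-direction} for the characterization and Cor.~\ref{cor-trivial-det} for existence and uniqueness. Your third step spells out the determinant comparison and the converse twist in more detail than the paper does, but the logical structure is identical, and your closing remark that the real content lies in Prop.~\ref{prop-nondeg-quad} is exactly right.
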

\noindent
In particular, there are $\#\Pic(X)[2]$ number of such vector bundles.
\begin{proof}
We already know from Prop.~\ref{prop-p=3-easy-direction} that rank-$2$ maximally Frobenius-destabilized vector bundles with trivial determinant are precisely the rank-$2$ vector bundles $E$ satisfying $S^2(E)\cong(F_{\rel})_*T_Y$. On the other hand, by Cor.~\ref{cor-trivial-det}, bundles with this property exist and are unique up to twisting by any line bundle in $\Pic(X)[2]$.
\end{proof}

\noindent
Even more generally,

\begin{thm}
In characteristic $3$, maximally Frobenius-destabilized vector bundles of rank-$2$ exist over an arbitrary smooth projective curve of genus $g\ge 2$, and are unique up to twisting by line bundles. Furthermore, such vector bundles are precisely the rank-$2$ vector bundles $E$ satisfying $S^2(E)\cong (F_{\rel})_*L$ for some line bundle $L$ over $Y$.
\end{thm}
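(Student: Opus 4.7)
The plan is to reduce the general statement to Thm.~\ref{thm-trivial-det}, which handles the trivial-determinant case. The characterization portion requires essentially no new work: Prop.~\ref{prop-p=3-easy-direction} already shows that a rank-$2$ vector bundle $E$ is maximally Frobenius-destabilized if and only if $S^2(E)\cong(F_{\rel})_*L$ for some line bundle $L$ over $Y$. Thus the substance lies in the existence and the uniqueness-up-to-twist claims.

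For existence, one simply invokes Thm.~\ref{thm-trivial-det} to produce a rank-$2$ maximally Frobenius-destabilized vector bundle of trivial determinant. It is then worth recording (and easy to verify) that the property of being maximally Frobenius-destabilized is invariant under twisting by an arbitrary line bundle $N$: stability of $E$ clearly passes to $E\otimes N$, and since $F_{\rel}^*(E\otimes N)\cong F_{\rel}^*E\otimes F_{\rel}^*N$, the Harder-Narasimhan filtration of $F_{\rel}^*E$ is merely translated uniformly by $F_{\rel}^*N$, so the degree gaps of $2g-2$ between successive rank-one quotients persist.

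For uniqueness up to twisting, suppose $E_1$ and $E_2$ are two rank-$2$ maximally Frobenius-destabilized bundles over $X$. Applying Lem.~\ref{lem-max-fd-trivial-det} to each, there exist line bundles $M_1$ and $M_2$ such that $E_1\otimes M_1$ and $E_2\otimes M_2$ both have trivial determinant. By the twist-invariance noted above, both remain maximally Frobenius-destabilized, so Thm.~\ref{thm-trivial-det} supplies a line bundle $N\in\Pic(X)[2]$ with $E_1\otimes M_1\cong E_2\otimes M_2\otimes N$. Rearranging yields $E_2\cong E_1\otimes(M_1\otimes M_2^{-1}\otimes N^{-1})$, as required.

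No genuine obstacle arises; the only point requiring care is the twist-invariance step, which justifies that the two reductions (via Lem.~\ref{lem-max-fd-trivial-det}) keep us inside the class governed by Thm.~\ref{thm-trivial-det}. Everything else is direct citation of established results.
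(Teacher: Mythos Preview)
Your proposal is correct and follows essentially the same route as the paper: the characterization is drawn directly from Prop.~\ref{prop-p=3-easy-direction}, and both existence and uniqueness-up-to-twist are reduced to the trivial-determinant case Thm.~\ref{thm-trivial-det} via Lem.~\ref{lem-max-fd-trivial-det}. The only difference is that you spell out the twist-invariance of the maximally Frobenius-destabilized property and the uniqueness argument in more detail, which the paper leaves implicit.
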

\begin{proof}
The second claim again follows from Prop.~\ref{prop-p=3-easy-direction}. Up to twisting by a line bundle, every rank-$2$ maximally Frobenius-destabilized vector bundle $E$ has trivial determinant, by Lem.~\ref{lem-max-fd-trivial-det}. The result then reduces to thm.~\ref{thm-trivial-det}.
\end{proof}

\begin{rem}
In \cite[Thm.~1.2]{Oss}, B.~Osserman proves that over an arbitrary genus-$2$ curve in characteristic $3$, there are exactly $\#\Pic(X)[2]$ number of rank-$2$ Frobenius-destabilized vector bundles. Since over genus-2 curves, Frobenius-destabilized bundles are precisely the maximally Frobenius-destabilized ones (cf.~\cite[Prop.~3.3]{Jos}, or look in the proof of our Lem.~\ref{lem-r2g2}), Thm.~\ref{thm-trivial-det} specializes to Osserman's counting formula in the genus-$2$ case.
\end{rem}

\section{An application: ample vector bundles}
 We first give a new criterion for ample vector bundles, which is valid over an arbitrary smooth projective variety over an algebraically closed field $k$ (of arbitrary characteristic). C.~Barton \cite{Bar} proved a numerical criterion for ample vector bundles in terms of their pullbacks. His result will form the basis of ours, so we briefly review it. Given a projective variety $X$ over an algebraically closed field $k$, we let $N_1(X)$ denote the group of integral $1$-cycles of $X$ modulo numerical equivalence, and let $A_1(X)=N_1(X)\otimes_{\ZZ}\RR$. It is a finite-dimensional vector space (cf.~\cite[\S IV, Prop.~4]{Kle}). Thus we may fix a basis $z_1,\cdots,z_q$ for $A_1(X)$, and let $\|\cdot\|$ be the norm on $A_1(X)$ for which the basis $z_1,\cdots,z_q$ is orthonormal.

\begin{thm}[C.~Barton]
\label{thm-barton}
A vector bundle $E$ over $X$ is ample if and only if there exists a real number $\varepsilon$ such that for any smooth projective curve $Y$, any finite morphism $f:Y\rightarrow X$, and any line bundle quotient $f^*E\rightarrow L$, there holds $\deg(L)/\deg(f)\ge\varepsilon\|f(Y)\|$.
\end{thm}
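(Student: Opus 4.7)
The plan is to translate Barton's inequality into a positivity statement for the tautological line bundle $\xi = c_1(\mathcal{O}_{\PP(E)}(1))$ on the projective bundle $\pi : \PP(E) \to X$, and then invoke the generalized Kleiman criterion for ampleness of line bundles. A line bundle quotient $f^*E \twoheadrightarrow L$ arising from a finite morphism $f : Y \to X$ from a smooth projective curve corresponds to a lift $\widetilde{f} : Y \to \PP(E)$ with $\pi \circ \widetilde{f} = f$ and $\widetilde{f}^*\xi \cong L$, and conversely every morphism from a smooth curve to $\PP(E)$ arises this way. Setting $\gamma = \widetilde{f}_*[Y] \in A_1(\PP(E))$, one has $\deg(L) = \xi \cdot \gamma$ and $\pi_*\gamma = f_*[Y] = \deg(f)\cdot [f(Y)]$, so Barton's inequality becomes
\[
\xi \cdot \gamma \;\geq\; \varepsilon \,\|\pi_*\gamma\|_{A_1(X)}.
\]
Passing to normalizations, this must hold for the class of every integral curve $C \subset \PP(E)$ not contained in a fiber of $\pi$.

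For the ``only if'' direction, assume $E$ is ample so that $\xi$ is ample on $\PP(E)$. Fix a norm on $A_1(\PP(E))$. The standard uniform positivity for an ample class---a consequence of Kleiman's strict-positivity criterion together with compactness of the unit sphere in $\overline{NE}(\PP(E))$---yields $\varepsilon' > 0$ with $\xi \cdot \gamma \geq \varepsilon' \|\gamma\|_{A_1(\PP(E))}$ for all $\gamma \in \overline{NE}(\PP(E))$. Since $\pi_* : A_1(\PP(E)) \to A_1(X)$ is a bounded linear map of finite-dimensional normed spaces, we obtain $\xi \cdot \gamma \geq (\varepsilon'/\|\pi_*\|)\|\pi_*\gamma\|$, giving Barton's inequality with $\varepsilon = \varepsilon'/\|\pi_*\|$.

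For the ``if'' direction, assume Barton's inequality; we show $\xi$ is ample. By Kleiman's criterion it suffices to prove $\xi \cdot \gamma > 0$ for every nonzero $\gamma \in \overline{NE}(\PP(E))$. Writing an effective 1-cycle as $\gamma = \gamma_{\mathrm{fib}} + \gamma_{\mathrm{hor}}$, where $\gamma_{\mathrm{fib}}$ is supported in fibers of $\pi$ and $\gamma_{\mathrm{hor}}$ is a sum of curves mapping to curves in $X$, the hypothesis applied termwise to $\gamma_{\mathrm{hor}}$ and the triangle inequality yield $\xi \cdot \gamma \geq \varepsilon \|\pi_*\gamma\|$; this extends by continuity to all of $\overline{NE}(\PP(E))$. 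Now suppose $\gamma \in \overline{NE}(\PP(E))$ satisfies $\xi \cdot \gamma = 0$: then $\pi_*\gamma = 0$, so $\gamma \in \overline{NE}(\PP(E)) \cap \ker \pi_*$. The standard projective bundle decomposition $A_1(\PP(E)) = \pi^* A_1(X) \oplus \RR \cdot [\ell]$, with $[\ell]$ the class of a line in a fiber, identifies this intersection with the non-negative ray spanned by $[\ell]$; since $\xi \cdot [\ell] = 1$, the equation $\xi \cdot \gamma = 0$ forces $\gamma = 0$.

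The main obstacle is the ``if'' direction, where one must upgrade a hypothesis involving only curves with positive-dimensional image in $X$ to strict positivity over the full closed Mori cone of $\PP(E)$; the projective bundle structure together with relative ampleness of $\xi$ for $\pi$ is exactly what controls classes supported in fibers of $\pi$. The ``only if'' direction is by comparison a routine application of uniform positivity for ample line bundles combined with continuity of $\pi_*$.
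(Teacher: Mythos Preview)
The paper does not supply its own proof of this statement; it simply cites Barton's original paper \cite[Thm.~2.1]{Bar}. Your argument is correct and is in fact the standard reduction---passing to the tautological class $\xi$ on $\PP(E)$ and invoking Kleiman's criterion---that underlies Barton's result.

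Two minor points of presentation. First, the decomposition you write as $A_1(\PP(E)) = \pi^* A_1(X) \oplus \RR\cdot[\ell]$ is better phrased dually: one has $N^1(\PP(E))_\RR = \pi^* N^1(X)_\RR \oplus \RR\cdot\xi$, so $\pi_* : N_1(\PP(E))_\RR \to N_1(X)_\RR$ is surjective with one-dimensional kernel spanned by $[\ell]$; the symbol $\pi^*$ on $1$-cycles is not standard. Second, the identification $\overline{NE}(\PP(E)) \cap \ker\pi_* = \RR_{\ge 0}\cdot[\ell]$ deserves a word: it holds because $[\ell]$ is effective and, $\PP(E)$ being projective, $\overline{NE}(\PP(E))$ contains no line (pair with any ample class). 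With these clarifications your proof is complete.
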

\begin{proof}
This is \cite[Thm.~2.1]{Bar}. The main ingredient of the proof is a criterion of amplitude due to S.~Kleiman \cite[IV-2, Prop.~2]{Kle}.
\end{proof}

\noindent
We also need a result about vector bundles over curves proved by S.-W.~Zhang in the arithmetic setting (though it is possibly older), and we choose the more geometric presentation in \cite{dJS}:

\begin{thm}[S.-W.~Zhang]
\label{thm-zhang}
Given a vector bundle $E$ over a smooth projective curve $C$ and any real number $\varepsilon>0$, there exists a smooth projective curve $C'$, a finite morphism $f:C'\rightarrow C$, and a line bundle quotient $f^*E\rightarrow L$ such that $\deg(L)/\deg(f)<\mu(E)+\varepsilon$.
\end{thm}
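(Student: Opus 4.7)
The strategy is to interpret line bundle quotients of $f^{*}E$ as irreducible reduced curves in the projective bundle $\pi\colon\mathbb{P}(E)\to C$, and to construct such curves from global sections of $\mathcal{O}_{\mathbb{P}(E)}(N)\otimes\pi^{*}M$ via a Riemann--Roch count on $C$. First I would reduce to the semistable case by replacing $E$ with its final Harder--Narasimhan quotient $Q$ (of slope $\mu_{\min}(E)\le\mu(E)$); any line bundle quotient of $f^{*}Q$ is a line bundle quotient of $f^{*}E$ with the same ratio $\deg L/\deg f$. Writing $\mu=\mu(E)$, $r=\rank(E)$, and $\mathbb{P}(E)=\mathrm{Proj}_{C}(\mathrm{Sym}^{\bullet}E)$ with tautological surjection $\pi^{*}E\twoheadrightarrow\mathcal{O}_{\mathbb{P}(E)}(1)$, set $H=c_{1}(\mathcal{O}(1))$ and let $F$ be a fiber class; the standard relations give $H^{r}=r\mu$, $H^{r-1}\cdot F=1$, and $F^{k}=0$ for $k\ge 2$. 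A pair $(f\colon C'\to C,\ f^{*}E\twoheadrightarrow L)$ with $C'$ smooth projective is the same data as an irreducible reduced curve $D\subset\mathbb{P}(E)$ not contained in a fiber, together with its normalization $C'\to D$; then $L=\mathcal{O}(1)|_{C'}$, $\deg f=D\cdot F$, and $\deg L=D\cdot H$.

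Next, for a line bundle $M$ on $C$ of degree $n$ and an integer $N\ge 1$, the projection formula gives $\pi_{*}(\mathcal{O}(N)\otimes\pi^{*}M)=S^{N}E\otimes M$, and Riemann--Roch on $C$ yields
\[
\chi(S^{N}E\otimes M)=\binom{N+r-1}{r-1}\,(N\mu+n+1-g),
\]
which is positive whenever $n>g-1-N\mu$. For such $n$ and $N$ sufficiently large the Euler characteristic agrees with $h^{0}$ by Serre vanishing on $\mathbb{P}(E)$ (invoking either Castelnuovo--Mumford regularity or asymptotic semistability of $S^{N}E$), so the linear system $|NH+nF|$ is nonempty. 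Applying Bertini to $r-1$ general members $D_{1},\ldots,D_{r-1}$ will produce a complete intersection $D=D_{1}\cap\cdots\cap D_{r-1}$ that is irreducible and reduced. Because $F^{k}=0$ for $k\ge 2$, intersection theory gives
\[
D\cdot F=N^{r-1},\qquad D\cdot H=r\mu\,N^{r-1}+(r-1)n\,N^{r-2},
\]
so if $C'$ denotes the normalization of $D$, then
\[
\frac{\deg L}{\deg f}=r\mu+\frac{(r-1)n}{N}.
\]
The required bound $<\mu+\varepsilon$ becomes $n/N<\varepsilon/(r-1)-\mu$, which combined with the existence bound $n>g-1-N\mu$ becomes satisfiable as soon as $N>(r-1)(g-1)/\varepsilon$, completing the construction.

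The main obstacle is confirming that the complete intersection $D$ is an irreducible reduced curve rather than a union of fibers or other extraneous components. This requires the linear system $|NH+nF|$ to admit an irreducible generic member, a Bertini-type statement that is cleanest in characteristic zero; one ensures it in general by slightly enlarging $n$ within the permissible open interval (tensoring $M$ by a point of $C$), which promotes $|NH+nF|$ to base-point freeness and puts Bertini into force. In positive characteristic, where Bertini smoothness may fail, irreducibility of a general member still suffices because only the normalization $C'$ of $D$ is required to be smooth. A secondary technical point is the passage from $\chi>0$ to $h^{0}>0$: when $p$ is small, $S^{N}E$ can fail to be semistable, but a direct Castelnuovo--Mumford regularity argument on $\mathbb{P}(E)$ still gives Serre vanishing of $\mathcal{O}(N)\otimes\pi^{*}M$ for $N$ large enough, preserving the construction.
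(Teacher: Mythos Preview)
The paper itself supplies no argument for this statement: it simply cites \cite[Thm.~5.7]{dJS} and remarks that the cover there may be taken smooth. Your proposal is therefore not competing with a proof in the paper but rather reconstructing the cited one, and your projective-bundle strategy is indeed the standard geometric route (and is close in spirit to what de Jong--Starr do).

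Your outline is essentially sound, but two points deserve sharpening. First, the passage from $\chi>0$ to $h^{0}>0$ is immediate on a curve: since $\chi=h^{0}-h^{1}$ and $h^{1}\ge 0$, one always has $h^{0}\ge\chi$, so no Serre vanishing or regularity argument is needed merely to produce a section. Where vanishing genuinely enters is in your base-point-freeness step, and there your ``slightly enlarge $n$'' fix is too vague: global generation of $S^{N}E\otimes M$ is equivalent to $n>2g-1-\mu_{\min}(S^{N}E)$, and in positive characteristic $\mu_{\min}(S^{N}E)$ can drop well below $N\mu$, so one cannot simply absorb this into the existing inequality $n/N<\varepsilon/(r-1)-\mu$ without further input. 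The cleanest repair is to strengthen your initial reduction: rather than passing to the last Harder--Narasimhan quotient of $E$, pass (via Thm.~\ref{thm-fdHN}) to a Frobenius pullback $(F_{\rel}^{k})^{*}E$ whose Harder--Narasimhan quotients are strongly semistable, and take $Q$ to be the last of these. For strongly semistable $Q$ the symmetric powers $S^{N}Q$ remain semistable, so $\mu_{\min}(S^{N}Q)=N\mu(Q)$ and your inequalities become compatible for large $N$; the extra factor of $p^{k}$ in $\deg f$ is harmless since $\mu(Q)/p^{k}=L_{\min}(E)\le\mu(E)$.

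Second, once base-point-freeness is secured, the Bertini irreducibility you need (for a general member of a base-point-free linear system on an irreducible variety with image of dimension $\ge 2$) is valid in arbitrary characteristic by Jouanolou's version, and iterating it down to a curve is routine. Since you only need the normalization $C'$ to be smooth, failure of Bertini smoothness in characteristic $p$ is indeed irrelevant, as you note.
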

\begin{proof}
See \cite[Thm.~5.7]{dJS}. Although the authors prove that there exists a finite, flat cover of $C$, their proof actually shows that $C'$ can be chosen to be smooth.
\end{proof}

Our criterion generalizes \cite[Thm.~2.3]{Bre} and \cite[Thm.~1.1]{Bis} to arbitrary dimensions, and is essentially a consequence of \cite[IV-2, Prop.~2]{Kle}:

\begin{thm}
\label{thm-ample-criterion-text}
Let $X$ be a smooth projective variety over an algebraically closed field $k$, and $E$ be a vector bundle over $X$. Then $E$ is ample if and only if there exists some real number $\varepsilon>0$, such that for any integral, closed curve $C$ in $X$, there holds $L_{\min}(E|_C)\ge\varepsilon\|C\|$.
\end{thm}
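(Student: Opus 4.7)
The plan is to deduce the equivalence by combining Barton's criterion (Thm.~\ref{thm-barton}) with Zhang's approximation theorem (Thm.~\ref{thm-zhang}). Both $L_{\min}(E|_C)$ and Barton's condition concern pullbacks of $E$ under finite morphisms from smooth projective curves, but Barton's theorem controls only \emph{line bundle} quotients of such pullbacks, whereas $L_{\min}$ is defined as an infimum over all torsion-free quotients. Zhang's theorem is the precise bridge: on a smooth projective curve, any vector bundle admits line bundle quotients with normalized slope arbitrarily close to $\mu$ after passing to a suitable finite cover. This single point---passing from minimal slopes of arbitrary quotients to slopes of line bundle quotients---is the main obstacle, and it is resolved by Thm.~\ref{thm-zhang}.

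For the ``only if'' direction, I would assume $L_{\min}(E|_C) \geq \varepsilon\|C\|$ for every integral closed curve $C \subset X$. Take any finite morphism $f:Y\to X$ from a smooth projective curve with image $C:=f(Y)$ and any line bundle quotient $f^*E\twoheadrightarrow L$. Factoring $f$ as $Y\xrightarrow{g} C\hookrightarrow X$ with $\deg(g)=\deg(f)$, one identifies $f^*E$ with $g^*(E|_C)$, which is locally free on $Y$ (being the pullback of the locally free sheaf $E$ under a morphism to $X$, even when $C$ is singular). The inequality $\deg(L)\geq\mu_{\min}(g^*(E|_C))$ then holds by definition of $\mu_{\min}$ applied to the locally free rank-one quotient $L$, so dividing by $\deg(f)$ yields $\deg(L)/\deg(f)\geq L_{\min}(E|_C)\geq\varepsilon\|f(Y)\|$. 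Thm.~\ref{thm-barton} then gives ampleness of $E$.

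For the ``if'' direction, assume $E$ is ample and let $\varepsilon>0$ be the constant supplied by Thm.~\ref{thm-barton}. Fix an integral closed curve $C\subset X$ and a finite morphism $g:Y\to C$ from a smooth projective curve. Let $G$ be the last quotient in the Harder-Narasimhan filtration of $g^*(E|_C)$, so $G$ is semistable with $\mu(G)=\mu_{\min}(g^*(E|_C))$. For any $\delta>0$, Thm.~\ref{thm-zhang} applied to $G$ on $Y$ furnishes a finite morphism $h:Y'\to Y$ from a smooth projective curve and a line bundle quotient $h^*G\twoheadrightarrow L$ with $\deg(L)/\deg(h)<\mu(G)+\delta$. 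Since $L$ is then a line bundle quotient of the pullback of $E$ under the composition $Y'\to Y\to C\hookrightarrow X$, whose image is $C$ and whose degree onto $C$ is $\deg(g)\deg(h)$, Thm.~\ref{thm-barton} yields $\deg(L)/(\deg(g)\deg(h))\geq\varepsilon\|C\|$. Combining the two estimates gives $\mu_{\min}(g^*(E|_C))/\deg(g)+\delta/\deg(g)>\varepsilon\|C\|$; letting $\delta\to 0$ and then taking the infimum over $g$ yields $L_{\min}(E|_C)\geq\varepsilon\|C\|$, completing the proof.
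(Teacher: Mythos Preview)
Your argument is correct and uses the same two ingredients as the paper---Barton's criterion (Thm.~\ref{thm-barton}) and Zhang's approximation (Thm.~\ref{thm-zhang})---but your organization is cleaner. In the direction ``ample $\Rightarrow$ $L_{\min}$-bound'', the paper first invokes Langer's theorem (Thm.~\ref{thm-fdHN}) to realize $L_{\min}(E|_C)$ at a finite Frobenius level and then applies Zhang once, incurring a factor-of-two loss ($\varepsilon=\varepsilon'/2$); you instead fix an \emph{arbitrary} finite cover $g:Y\to C$, apply Zhang to the minimal Harder--Narasimhan quotient of $g^*(E|_C)$, and pass to the limit $\delta\to 0$ and then to the infimum over $g$. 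This avoids Thm.~\ref{thm-fdHN} entirely and recovers Barton's constant without loss. Likewise, in the converse direction the paper factors through a separable-times-Frobenius decomposition and appeals to Lem.~\ref{lem-sep-hn}, whereas you just use that $\mu_{\min}(g^*(E|_C))/\deg(g)$ is one of the terms in the infimum defining $L_{\min}(E|_C)$. Two small remarks: your ``if'' and ``only if'' labels are swapped relative to the statement; and since $L_{\min}(E|_C)$ is \emph{defined} via the normalization $\nu:\tilde C\to C$, you should note that any $g:Y\to C$ from a smooth curve factors through $\tilde C$ with the same degree, so your infimum over such $g$ indeed computes $L_{\min}(\nu^*E)$.
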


\noindent
As before, the norm $\|\cdot\|$ on $A_1(X)$ is fixed with respect to a chosen basis. If $C$ is a non-smooth curve, the notation $L_{\min}(E|_C)$ is used to denote $L_{\min}(\nu^*E)$ where $\nu:\tilde{C}\rightarrow C$ is the normalization map. The proof in characteristic zero is a simplified version of the proof in positive characteristic, so we omit the former.

\begin{proof}[Proof in characteristic $p\neq 0$]
First suppose that $E$ is ample. Thm.~\ref{thm-barton} provides some $\varepsilon'>0$ such that for any smooth projective curve $Y$, any finite morphism $f:Y\rightarrow X$, and any line bundle quotient $f^*E\rightarrow L$, Barton's inequality $\deg(L)/\deg(f)\ge\varepsilon'\|f(Y)\|$ holds.

Now consider any integral, closed curve $C$ in $X$, and let $\nu:\tilde{C}\rightarrow C$ be the normalization map (with $\tilde{C}\cong C$ if $C$ is already smooth). Let $\tilde{E}=\nu^*(E|_C)$. By Thm.~\ref{thm-fdHN}, we may fix a sufficiently large $k$, such that
$$
L_{\min}(E|_C)=L_{\min}(\tilde{E})=\frac{\mu_{\min}((F_{\rel}^k)^*\tilde{E})}{p^k}
$$
Let $Q$ be the last quotient in the Harder-Narasimhan filtration of $(F_{\rel}^k)^*\tilde{E}$. Then $\mu_{\min}((F_{\rel}^k)^*\tilde{E})=\mu(Q)$. Furthermore, Thm.~\ref{thm-zhang} allows us to find a smooth projective curve $Y$, a finite morphism $g:Y\rightarrow\tilde{C}^{(-k)}$, and a line bundle quotient $g^*Q\rightarrow L$ such that
$$
\frac{\deg(L)}{\deg(g)}<\mu(Q)+\frac{p^k\varepsilon'}{2}\cdot\|C\|
$$
Divide by $p^k$, and let $f=\nu\circ F_{\rel}^k\circ g$, then
\begin{equation}
\label{eq-zhang-ineq}
\frac{\deg(L)}{\deg(f)}<\frac{\mu_{\min}((F_{\rel}^k)^*\tilde{E})}{p^k}+\frac{\varepsilon'}{2}\cdot\|C\|
\end{equation}
using $\deg(f)=p^k\deg(g)$, and $\mu_{\min}((F_{\rel}^k)^*\tilde{E})=\mu(Q)$. Since $f(Y)=C$, Barton's inequality $\deg(L)/\deg(f)\ge\varepsilon'\|C\|$ and \eqref{eq-zhang-ineq} together imply
$$
\frac{\mu_{\min}((F_{\rel}^k)^*\tilde{E})}{p^k}>\frac{\varepsilon'}{2}\cdot\|C\|
$$
Therefore, by letting $\varepsilon=\varepsilon'/2$, we obtain $L_{\min}(E|_C)\ge\varepsilon\|C\|$.

Conversely, suppose we have some real number $\varepsilon>0$, and inequality $L_{\min}(E|_C)\ge\varepsilon\|C\|$ for any integral, closed curve $C$ in $X$. We will check that $E$ is ample using Thm.~\ref{thm-barton}. Let $\nu$, $\tilde{C}$, and $\tilde{E}$ be as before. Indeed, given any finite morphism $f:Y\rightarrow X$ where $Y$ is a smooth projective curve, and any line bundle quotient $f^*E\rightarrow L$, there is a factorization
$$
\xymatrix{Y\ar[r]^{g\quad}\ar[dr]_f & \tilde{C}^{(-k)} \ar[d]^{\nu\circ F_{\rel}^k} \\ & X}
$$
where $g$ is a finite, separable morphism of smooth projective curves. Applying Lem.~\ref{lem-sep-hn} to the morphism $g$, we obtain
$$
\frac{\deg(L)}{\deg(f)}\ge\frac{\mu_{\min}(f^*E)}{\deg(f)}=\frac{\mu_{\min}((F^k_{\rel})^*\tilde{E})}{p^k}\ge L_{\min}(\tilde{E})=L_{\min}(E|_C)\ge\varepsilon\|C\|
$$
and the result follows.
\end{proof}

\begin{cor}[H.~Brenner \cite{Bre} and I.~Biswas \cite{Bis}]
\label{cor-ample-criterion}
Let $X$ be a smooth projective curve over an algebraically closed field $k$, and $E$ be a vector bundle over $X$. Then $E$ is ample if and only if $L_{\min}(E)>0$.
\end{cor}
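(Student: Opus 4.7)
The plan is to derive Cor.~\ref{cor-ample-criterion} as an immediate specialization of Thm.~\ref{thm-ample-criterion-text} to the case where the ambient variety $X$ is itself a smooth projective curve. The point will be that the universal quantifier over integral closed curves $C \subseteq X$ appearing in Thm.~\ref{thm-ample-criterion-text} degenerates when $\dim X = 1$, leaving only the single datum $C = X$.

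Two elementary observations will do the work. First, since $X$ is one-dimensional and integral, every integral closed curve $C \subseteq X$ must coincide with $X$, because any proper closed integral subscheme of a curve has dimension zero. Second, $N_1(X)$ is freely generated by the class $[X]$, so $A_1(X) \cong \RR$ is one-dimensional and the chosen norm assigns to $X$ a fixed strictly positive value $\|X\| > 0$.

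Applying Thm.~\ref{thm-ample-criterion-text} directly, $E$ is then ample if and only if there exists some $\varepsilon > 0$ such that $L_{\min}(E|_X) = L_{\min}(E) \ge \varepsilon \|X\|$. Since $\|X\|$ is a fixed positive real number, the existence of such an $\varepsilon$ is tautologically equivalent to the strict inequality $L_{\min}(E) > 0$. I expect no substantive obstacle, since all the real content of the corollary is already encoded in Thm.~\ref{thm-ample-criterion-text}; the only verification required is that the collapse of the quantifier to the single curve $C = X$ is legitimate, which is immediate from the dimension count.
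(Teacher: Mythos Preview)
Your proposal is correct and follows exactly the paper's approach: the paper's proof is the single sentence ``This is Thm.~\ref{thm-ample-criterion-text} in dimension one,'' and your argument simply spells out why that specialization is valid. The observations that the only integral closed curve in $X$ is $X$ itself and that $\|X\|$ is a fixed positive constant are precisely what make the one-line reduction legitimate.
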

\begin{proof}
This is Thm.~\ref{thm-ample-criterion-text} in dimension one.
\end{proof}

Combining our criterion for amplitude with earlier results, we find

\begin{cor}
\label{cor-deg-criterion}
Let $X$ be a smooth projective curve over an algebraically closed field $k$ of characteristic $p>0$. Let $E$ be a rank-$r$ semistable vector bundle over $X$. Suppose $r\le p$, then
\begin{enumerate}[(i)]
	\item $\deg(E)>r(r-1)(g-1)/p$ implies that $E$ is ample;
	
	\item if $E$ is maximally Frobenius-destabilized, then $\deg(E)\le r(r-1)(g-1)/p$ implies that $E$ is not ample.
\end{enumerate}
\noindent
Suppose $r>p$. Then $\deg(E)\ge r(g-1)$ implies that $E$ is ample.
\end{cor}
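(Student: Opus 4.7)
The plan is to reduce all three claims to the ampleness criterion Cor.~\ref{cor-ample-criterion}, which says that a vector bundle $E$ over a smooth projective curve is ample if and only if $L_{\min}(E)>0$. The key inequalities relating $\mu(E)$ and $L_{\min}(E)$ have already been established, so the whole corollary should follow by carefully bookkeeping strict versus non-strict inequalities.

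For (i), I would apply Prop.~\ref{prop-langer-ineq} to the semistable bundle $E$, which gives
\[
\mu(E)-L_{\min}(E)\le\frac{(r-1)(g-1)}{p}.
\]
Rearranging and using $\mu(E)=\deg(E)/r$, the hypothesis $\deg(E)>r(r-1)(g-1)/p$ yields $L_{\min}(E)>0$, so $E$ is ample.

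For (ii), the key point is the equality case of Prop.~\ref{prop-langer-ineq}: since $E$ is maximally Frobenius-destabilized, condition (iii) of that proposition holds by definition, so the inequality above is an equality. The hypothesis $\deg(E)\le r(r-1)(g-1)/p$ then forces $L_{\min}(E)\le 0$, and Cor.~\ref{cor-ample-criterion} gives that $E$ is not ample. For the $r>p$ case, I would instead invoke the sharper Prop.~\ref{prop-langer-ineq-r>p}, which provides a \emph{strict} inequality $\mu(E)-L_{\min}(E)<g-1$. The hypothesis $\deg(E)\ge r(g-1)$ makes $\mu(E)\ge g-1$, so $L_{\min}(E)>\mu(E)-(g-1)\ge 0$, and $E$ is ample.

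None of these steps poses a genuine obstacle: the technical heavy lifting was done in establishing Prop.~\ref{prop-langer-ineq}, Prop.~\ref{prop-langer-ineq-r>p}, and Cor.~\ref{cor-ample-criterion}. The only subtlety worth flagging is the precise matching of strict versus non-strict inequalities: in (i) one needs the hypothesis to be strict so that the general (possibly non-sharp) inequality of Prop.~\ref{prop-langer-ineq} forces $L_{\min}(E)>0$, while in (ii) one needs the hypothesis to be non-strict together with the \emph{equality} supplied by the maximally Frobenius-destabilized assumption in order to conclude $L_{\min}(E)\le 0$ and hence failure of ampleness.
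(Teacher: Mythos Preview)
Your proposal is correct and follows essentially the same argument as the paper: both invoke Prop.~\ref{prop-langer-ineq} (together with its equality criterion for the maximally Frobenius-destabilized case) and Prop.~\ref{prop-langer-ineq-r>p}, then appeal to the ampleness criterion $L_{\min}(E)>0$ from Cor.~\ref{cor-ample-criterion}. Your careful tracking of strict versus non-strict inequalities is exactly what is needed and matches the paper's reasoning.
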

\begin{proof}
In the $r\le p$ case, Prop.~\ref{prop-langer-ineq} shows that $\mu(E)-L_{\min}(E)\le(r-1)(g-1)/p$, and equality is attained if and only if condition \eqref{eq-condition} holds. Since $E$ is ample if and only if $L_{\min}(E)>0$, by Cor.~\ref{cor-ample-criterion}, the statements (i) and (ii) readily follow. In the $r>p$ case, we use the sharper inequality $\mu(E)-L_{\min}(E)<g-1$ from Prop.~\ref{prop-langer-ineq-r>p}.
\end{proof}

We now use Cor.~\ref{cor-deg-criterion} to construct non-ample semistable vector bundles.

\begin{cor}[$r=p$]
Suppose $g\ge 2$. For every integer $d\le (p-1)(g-1)$, there exists a rank-$p$ vector bundle $E$ over any smooth projective curve $X$ of genus $g$ such that
\begin{enumerate}[(i)]
	\item $\deg(E)=d$, and
	\item $E$ is semistable but not ample.
\end{enumerate}
\end{cor}
\begin{proof}
Choose a line bundle $L$ over $X^{(-1)}$ of degree $d-(p-1)(g-1)$. Then $E:=(F_{\rel})_*L$ is a maximally Frobenius-destabilized vector bundle (Prop.~\ref{prop-r=p}) of rank $p$ and degree $d$. In particular, $E$ is semistable, and Cor.~\ref{cor-deg-criterion} shows that $E$ is not ample.
\end{proof}

\begin{cor}[$r<p$]
Suppose $g\ge 2$ and $p$ does not divide $g-1$; fix a natural number $r<p$. For every integer $d\le r(r-1)(g-1)/p$ which is \emph{divisible by $r$}, there exist a smooth projective curve $X$ of genus $g$ and a rank-$r$ vector bundle $E$ over $X$ such that
\begin{enumerate}[(i)]
	\item $\deg(E)=d$, and
	\item $E$ is semistable but not ample.
\end{enumerate}
\end{cor}
\begin{proof}
It follows from Prop.~\ref{prop-any-g-any-r} that there exists a genus-$g$ curve $X$, and a rank-$r$ maximally Frobenius-destabilized vector bundle $E_0$ over $X$. The construction there shows that $\deg(E_0)=0$. For some line bundle $L$ over $X$, the twist $E:=E_0\otimes L$ is of degree $d$. Since $E$ is still maximally Frobenius-destabilized, Cor.~\ref{cor-deg-criterion} shows that $E$ is not ample.
\end{proof}

\begin{rem}
In the $r<p$ case, our technique can only be used to construct non-ample semistable bundles whose degree is divisible by $r$. Indeed, the computation \eqref{eq-deg-lr} shows that the degree of \emph{any} maximally Frobenius-destabilized vector bundle (with rank $r<p$) is divisible by $r$.
\end{rem}

\medskip

\end{document}